\newcommand{\N}{\mathbb{N}}
\newcommand{\R}{\mathbb{R}}
\newtheorem{theorem}{Theorem}
\newtheorem{proposition}{Proposition}[section]
\newtheorem{lemma}[proposition]{Lemma}
\newtheorem{remark}[proposition]{Remark}
\numberwithin{equation}{section}
\newcommand{\rev}[1]{{\color{black}#1}}
\newcommand{\rphi}{\mathfrak{r}}
\newcommand{\jrun}{r_1}
\newcommand{\jrdeux}{r_2}
\newcommand{\run}{\rphi_1}
\newcommand{\rdeux}{\rphi_2}
\newcommand{\opphi}{e^{\rho |\partial_x|}\, }
\newcommand{\oprhozero}{e^{\rho_0 |\partial_x|}}
\newcommand{\opphibeta}{e^{\rho |\partial_x|-\beta}}
\newcommand{\ldiv}{L^2_{\mathrm{div}}}
\newcommand{\lloc}{L^2_{\mathrm{loc,div}}}
\newcommand{\philp}{\varphi_{\mathrm{lp}}}
\newcommand{\chilp}{\chi_{\mathrm{lp}}}
\newcommand{\TT}{\mathbf{T}}
\newcommand{\RR}{\mathbf{R}}
\newcommand{\dd}{\,\mathrm{d}}
\newcommand{\dif}{\mathrm{d}}
\newcommand{\ds}{\dif s}
\newcommand{\dt}{\dif t}
\newcommand{\dz}{\dif z}
\newcommand{\diverg}{\mathop{\mathrm{div}}}
\newcommand{\supp}{\mathop{\rm supp}\nolimits}
\newcommand{\eval}[1]{ \{ #1 \} }
\newcommand{\ue}{u^\varepsilon}
\newcommand{\uapp}{u^\varepsilon_{\mathrm{app}}}
\newcommand{\uinit}{u_{*}}
\newcommand{\ex}{e_x}
\newcommand{\ey}{e_y}
\newcommand{\PN}{\mathbf{P}_N}
\newcommand{\newcom}{\newcommand}
\def\inte#1{
\displaystyle\mathop{#1\kern0pt}^\circ }
\newcom{\cC}{{\mathcal C}}
\newcom{\cD}{{\mathcal D}}
\newcom{\cF}{{\mathcal F}}
\newcom{\cL}{{\mathcal L}}
\newcom{\cM}{{\mathcal M}}
\newcom{\cP}{{\mathcal P}}
\newcom{\cS}{{\mathcal S}}
\newcom{\cQ}{{\mathcal Q}}
\newcom{\cT}{{\mathcal T}}
\newcom{\cY}{{\mathcal Y}}
\newcom{\cZ}{{\mathcal Z}}
\newcom{\T}{\mathbb{T}}
\newcom{\Z}{\mathbb{Z}}
\newcom{\E}{\mathbb{E}}
\let\e=\varepsilon
\newcom{\f}{\frac}
\newcom{\dint}{\displaystyle\int}
\newcom{\dsum}{\displaystyle\sum}
\newcom{\dlim}{\displaystyle\lim}
\newcom{\ov}{\overline}
\newcom{\wt}{\widetilde}
\newcom{\pa}{\partial}
\newcom{\p}{\partial}
\newcom\na{\nabla}
\newcom\rto{\rightarrow}
\newcom\lto{\leftarrow}
\newcom\mto{\mapsto}
\newcom{\disp}{\displaystyle}
\newcom{\non}{\nonumber}
\newcom{\no}{\noindent}
\newcom{\QED}{$\square$}
\newcommand{\D}{\dot{\Delta}}
\newcommand{\Low}{\dot{S}}
\newcommand{\band}{\mathfrak{B}}
\newcommand{\besov}[1]{\dot{B}^{#1}}
\newcommand{\cb}{C_{\mathrm{B}}}
\newcommand{\email}[1]{\href{mailto:#1}{\texttt{#1}}}
\title{Controllability of the Navier-Stokes equation in a rectangle with a little help of a distributed phantom force}
\author{Jean-Michel Coron\footnote{Laboratoire Jacques-Louis Lions,
    Universit\'e Pierre et Marie Curie,
    4 place Jussieu, 
    75252 Paris cedex 05; France, \email{coron@ann.jussieu.fr}} \footnote{ETH Zurich
Institute for Theoretical Studies
Clausiusstrasse 47
8092 Zurich; 
Switzerland}, 
    Frédéric Marbach\footnote{Univ Rennes, CNRS, IRMAR - UMR 6625, F-35000 Rennes, France, \email{marbach@ann.jussieu.fr}}, Franck Sueur\footnote{Institut de Math\'ematiques de Bordeaux,
    Universit\'e de Bordeaux,
    351 cours de la lib\'eration,
    33405 Talence; France, \email{franck.sueur@math.u-bordeaux.fr}}, Ping Zhang\footnote{Academy of Mathematics $\&$ Systems Science
and  Hua Loo-Keng Key Laboratory of Mathematics, The Chinese Academy of
Sciences, Beijing 100190; China, \email{zp@amss.ac.cn}}}
\begin{document}

\maketitle

\begin{abstract}

We consider the 2D incompressible Navier-Stokes equation in a rectangle with the usual no-slip boundary condition prescribed on the upper and lower boundaries. We prove that for any positive time, for any finite energy initial data, there exist controls on the left and right boundaries and a distributed force, which can be chosen arbitrarily small in any Sobolev norm in space, such that the corresponding solution  is at rest at the given final time.  

 Our work improves earlier results in~\cite{MR2269867, MR2994698} where the distributed force is small only in a negative Sobolev space. It is a further step towards an answer to Jacques-Louis Lions' question in~\cite{MR1147191} about the small-time global exact boundary controllability of the Navier-Stokes equation with the no-slip boundary condition, for which no distributed force is allowed.

Our analysis relies on the well-prepared dissipation method already used in~\cite{MR3227326} for Burgers and in~\cite{2016arXiv161208087C} for Navier-Stokes in the case of the Navier slip-with-friction boundary condition. In order to handle the larger boundary layers associated with the no-slip boundary condition, we perform a preliminary regularization into analytic functions with arbitrarily large analytic radius and prove a long-time nonlinear Cauchy-Kovalevskaya estimate relying only on horizontal analyticity, in the spirit of~\cite{MR2145938,MR3464051}.

\end{abstract}

\newpage

\setcounter{tocdepth}{2}
\tableofcontents

\newpage

% ==============================================================================
\section{Introduction and statement of the main result}
% ==============================================================================

\subsection{Historical context}

In the late 1980's, Jacques-Louis Lions introduced in~\cite{MR1147191} (see also \cite{MR1453828,everything,MR2051507}) the question of the controllability of fluid flows in the sense of how the Navier-Stokes system can be driven by a control of the flow on a part of the boundary to a wished plausible state, say a vanishing velocity. Jacques-Louis Lions' problem has been solved in~\cite{2016arXiv161208087C} by the first three authors in the particular case of the Navier slip-with-friction boundary condition (see also \cite{2017arXiv170307265C} for a gentle introduction to this result). In its original statement with the no-slip Dirichlet boundary condition, it is still an important open problem in fluid controllability.

\subsection{Statement of the main result}

In this paper we consider the case where the flow occupies a rectangle, where controls are applied to the lateral boundaries and the no-slip condition is prescribed on the upper and lower boundaries. We thus consider a rectangular domain 
$$\Omega := (0,L) \times (-1,1),$$
where $L > 0$ is the length of the domain. We will use $ (x,y)$ as coordinates. Inside this domain, a fluid evolves under the Navier-Stokes equation. We will name $u = (u_1,u_2)$ the two components of its velocity. Hence, $u$ satisfies:
\begin{equation} \label{eq.ns}
 \left\{
  \begin{aligned}
   \partial_t u + \left( u \cdot \nabla \right) u + \nabla p - \Delta u & = f_g, \\
   \diverg u & = 0,
  \end{aligned}
 \right.
\end{equation}
in $\Omega$, where $p$ denotes the fluid pressure and $f_g$ a force term (to be detailed below). We think of this domain as a river or a tube and we assume that we are able to act on the fluid flow at both end boundaries: 
$$\Gamma_0 := \{0\} \times (-1,1) \text{ and } \Gamma_L := \{L\} \times (-1,1).$$
On the remaining parts of the boundary, 
$$\Gamma_{\pm} := (0,L) \times \{\pm 1\},$$
we  assume that we cannot control the fluid flow and that it satisfies null Dirichlet boundary conditions: 
\begin{equation} \label{cd.ns}
  u = 0 
  \quad \text{on} \quad
  \Gamma_{\pm}.
\end{equation}
We will consider initial data in the space $\ldiv(\Omega)$ of divergence free vector fields, tangent to the boundaries $\Gamma_{\pm}$. The main result of this paper is the following. 

\begin{theorem} \label{thm.main1}
 Let $T> 0$ and $u_*$ in $\ldiv(\Omega)$. 
 % \del There exists $0 < \delta < 1$ such that, 
 For any $k \in \N$ and for any $\eta > 0$, there exists a force {$f_g \in L^1((0,T);H^k(\Omega))$} satisfying
 \begin{equation}
  \label{f.small}
  \left\| f_g \right\|_{L^1((0,T);H^k(\Omega))} \leq \eta
 \end{equation}
 and an associated weak Leray solution $u \in C^0([0,T];\ldiv(\Omega)) \cap L^2 ((0,T); H^1(\Omega))$ to~\eqref{eq.ns} and \eqref{cd.ns} satisfying $u(0) = u_*$ and $u(T) = 0$.
\end{theorem}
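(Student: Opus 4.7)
The strategy is to split the control interval $[0,T]$ into four successive stages and to allocate a portion of the small phantom force $f_g$ and of the boundary controls to each. First, on a short initial subinterval $[0,\tau_1]$, I would use the viscous smoothing of Navier--Stokes together with a (small) well-chosen phantom force to regularize $u_*$ into a divergence-free field $\tilde u(\tau_1,\cdot)$ which is not only smooth but \emph{analytic in the horizontal variable~$x$} with an arbitrarily large analytic radius $\rho_0$. The phantom force is indispensable here: starting from a generic $L^2$ datum, one cannot reach horizontal analyticity in finite time by pure dissipation, but one can first approximate $u_*$ by a smooth divergence-free field $\hat u_*$ and absorb the $L^2$ discrepancy $u_*-\hat u_*$ into $f_g$ (after a suitable Helmholtz projection). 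The cost in $L^1_tH^k_x$ can be made smaller than $\eta/4$ by choosing $\hat u_*$ close enough to $u_*$.

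\medskip

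On the second subinterval $[\tau_1,\tau_2]$, with a horizontally analytic, smooth initial state at hand, I would implement a Coron-type return-method strategy at the inviscid level. One writes $u = u^E + \varepsilon^{1/2} v^{BL} + \varepsilon\, r$, where $u^E$ is a well-chosen Euler trajectory which starts from $\tilde u(\tau_1,\cdot)$, passes through a large shear flow (the ``flushing'' profile) and returns to zero at time $\tau_2$, while $v^{BL}$ is a Prandtl-type boundary-layer corrector concentrated near $\Gamma_\pm$ that enforces the no-slip condition. The lateral boundary controls are chosen to match $u^E$ at $\Gamma_0,\Gamma_L$. The key quantitative point, and this is where the bulk of the analytic work will lie, is a \emph{long-time nonlinear Cauchy--Kovalevskaya estimate} for $v^{BL}$: one propagates a norm of the form $\|\opphi v^{BL}\|_{L^2_y\mathcal{B}^{s}_x}$, where $\rho(t)$ is an analytic radius decaying mildly in time, using that derivatives in $y$ costing $\varepsilon^{-1/2}$ are balanced by horizontal analyticity only. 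This is precisely the technical novelty announced in the abstract and I expect it to be the main obstacle: the no-slip condition forces a layer of thickness $\sqrt{\varepsilon}$ (much larger than in the slip-with-friction case of \cite{2016arXiv161208087C}), and closing the nonlinear estimate without vertical analyticity requires a careful bookkeeping of the loss of radius, in the spirit of \cite{MR2145938,MR3464051}.

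\medskip

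On the third subinterval $[\tau_2,\tau_3]$, I would exploit \emph{well-prepared dissipation}: the previous step leaves a residual boundary layer profile whose leading order satisfies a heat-type equation in $y$ with coefficients inherited from $u^E$; by the return-method construction, this profile has average zero in $x$ in a well-chosen sense, so that a sharp heat-kernel decay brings $u(\tau_3,\cdot)$ below any prescribed threshold in a strong norm, up to an error that can again be swallowed by the phantom force. Finally, on $[\tau_3,T]$, the smallness of $u(\tau_3,\cdot)$ allows one to invoke a standard local null-controllability result for Navier--Stokes (Fursikov--Imanuvilov type, using only the lateral boundary controls and no force), which finishes the drive to~$0$ at time~$T$. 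Throughout, one tracks the total force used and checks that each of the four contributions can be made less than $\eta/4$ in $L^1((0,T);H^k)$, yielding \eqref{f.small}.
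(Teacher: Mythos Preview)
Your high-level three-plus-one phase architecture (regularize; return method with boundary layer; dissipate; local control) matches the paper's, but the return-method phase as you describe it contains a genuine gap.

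You write the expansion as $u = u^E + \varepsilon^{1/2} v^{BL} + \varepsilon\, r$ with $u^E$ an Euler trajectory \emph{starting from the analytic state} $\tilde u(\tau_1,\cdot)$. This forces the base Euler flow to carry the arbitrary (albeit analytic) initial data, so its trace on $\Gamma_\pm$ is a nontrivial $x$-dependent tangential field, and the boundary-layer corrector is then governed by the genuine nonlinear Prandtl system. Closing such a layer with horizontal analyticity only, on a long time interval, is precisely the difficulty the rectangle geometry is meant to \emph{avoid}. The paper instead first performs the small-time/small-viscosity rescaling $U^\varepsilon(t,\cdot)=\varepsilon^{-1}u^\varepsilon(t/\varepsilon,\cdot)$, after which the initial data becomes $\varepsilon u_b$. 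The base flow $u^0(t)=h(t)e_x$ can then be taken to start from $0$, be purely tangential and \emph{independent of $x$}; the resulting boundary layer $v^0$ satisfies a linear scalar heat equation in the fast variable (no Prandtl nonlinearity, no $x$-dependence). The initial data is carried not by $u^E$ but by the linearized transport profile $\varepsilon u^1$, and the Cauchy--Kovalevskaya machinery is applied to the \emph{remainder} $r^\varepsilon$ (to absorb the $\varepsilon^{-1/2}$ amplification in $(r^\varepsilon\cdot\nabla)\uapp$), not to the boundary layer itself.

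A second, smaller point: the boundary-layer amplitude is $O(1)$, not $O(\varepsilon^{1/2})$; it is the layer's \emph{thickness} that is $O(\varepsilon^{1/2})$. And your regularization mechanism (``absorb $u_*-\hat u_*$ into $f_g$'') would not by itself produce horizontal analyticity; the paper obtains it by a tangential frequency cutoff $\mathbf{P}_N$ applied after a short free evolution and a compact-support localization, with the phantom force arising from the commutator between $\mathbf{P}_N$ and the nonlinearity.
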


Since the notion of weak Leray solution is classically defined in the case where the null Dirichlet boundary condition is prescribed on the whole boundary, let us detail that we say that $u \in C^0([0,T];\ldiv(\Omega)) \cap L^2 ((0,T); H^1(\Omega))$ is a weak Leray solution to~\eqref{eq.ns} and~\eqref{cd.ns} satisfying $u(0, \cdot) = u_*$ and $u(T,\cdot) = 0$ when it satisfies the weak formulation
\begin{equation}
 \label{int}
 \begin{split}
 - \int_0^T\int_\Omega u \cdot \partial_t \varphi
 + \int_0^T\int_\Omega (u \cdot \nabla) u \cdot \varphi
 & + 2 \int_0^T\int_\Omega D(u) : D(\varphi)
 \\ & = \int_\Omega u_* \cdot \varphi(0,\cdot)
 +\int_0^T\int_\Omega u \cdot f_g  ,
 \end{split}
\end{equation}
for any test function $\varphi \in C^\infty([0,T]\times\bar{\Omega})$ which is divergence free, tangent to $\Gamma_{\pm}$, vanishes at $t = T$ and vanishes on the controlled parts of the boundary $\Gamma_0$ and $\Gamma_L$. Thus, \eqref{int} encodes the no-slip condition on the upper and lower boundaries only. This under-determination encodes that one has control over the remaining part of the boundary, that is on the lateral boundaries. The controls on the lateral boundaries are therefore not explicit in the statement of Theorem \ref{thm.main1}. Still the proof below will provide some more insights on the nature of possible controls to the interested reader. Once a trajectory is known, one can indeed deduce that the associated controls are the traces on $\Gamma_0$ and $\Gamma_L$ of the solution. \rev{Hence, elementary trace theorems and the regularity of weak Leray solutions imply that such controls are at least in $L^2((0,T);H^{\frac 12}(\Gamma_0 \cup \Gamma_L))$.}

\subsection{Comments and references}

\begin{remark}[Relation to the open problem]
 We view Theorem~\ref{thm.main1} as an intermediate step towards an answer to Jacques-Louis Lions' problem, which requires to prove that the theorem is still true with a vanishing distributed force $f_g = 0$. Here, we need a non-vanishing force but we can choose it very small even in strong topologies. Our result therefore suggests that the answer to Jacques-Louis Lions' question is very likely positive, at least for this geometry. Nevertheless, new ideas are probably necessary to ``eliminate'' the unwanted distributed force we use.
\end{remark}

\begin{remark}[Local vs.\ global null controllability and Reynolds numbers] \label{rk:local}
 The fact that, for any $T > 0$, one can drive to the null equilibrium state $u = 0$ in time $T$ without any distributed force ($f_g = 0$) was already known when the initial data $\uinit$ is small enough in $L^2(\Omega)$ (with a maximal size depending on $T$). In this case, one may think of the bilinear term in Navier-Stokes system
 as a small perturbation term of the Stokes equation so that the controllability can be obtained by means of Carleman estimates and fixed point theorems. Loosely speaking, such an approach corresponds to low Reynolds controllability. 
 
 More generally, local null controllability is a particular case of local controllability to trajectories. For Dirichlet boundary conditions, the first results have been obtained by Imanuvilov who proved local controllability in 2D and 3D provided that the initial state are close in $H^1$ norm, with interior controls, first towards steady-states in \cite{MR1617825} then towards strong trajectories in \cite{MR1804497}. Fursikov and Imanuvilov proved large time global null controllability in 2D for a control supported on the full boundary of the domain in \cite{MR1308746}. Still in 2D, they also proved local controllability to strong trajectories for a control acting on a part of the boundary and initial states close in $H^1$ norm in \cite{MR1422385}. Eventually, in \cite{MR1472291} they proved in 2D and 3D local controllability to strong trajectories with controls acting on the full boundary, still for initial states close in $H^1$ norm. More recently, these works have been improved in \cite{MR2103189}, where the authors proved local controllability towards less regular trajectories with interior controls and for initial states close in $L^2$ norm in 2D and $L^4$ norm in 3D.

 In contrast, in \cref{thm.main1}, the initial data $\uinit$ can be arbitrarily large (and $T$ arbitrarily small). This corresponds to controllability of Navier-Stokes system at large Reynolds numbers. In this regime, the first author and Fursikov proved global null controllability for the Navier-Stokes system in a 2D manifold without boundary in \cite{MR1470445}. 
\end{remark} 

\begin{remark}[Comparison with earlier results]
 For the large Reynolds regime, let us mention the earlier references~\cite{MR2269867, MR2994698} where a related result is obtained in a similar setting. In this earlier result, the distributed force can be chosen small in $L^p((0,T);H^{-1}(\Omega))$, where $1 < p < 4/3$.
 The fact that, in
 \cref{thm.main1}, our phantom force can be chosen arbitrarily small in the space $L^1((0,T),H^k(\Omega))$ for any $k \geq 0$, is the major improvement of this work.
\end{remark}

\begin{remark}[Geometric setting]
 Theorem~\ref{thm.main1} remains true for any rectangular domain $(0,L_1)\times(0,L_2)$ and any positive viscosity $\nu$, thanks to a straightforward change of variables. On the other hand, we consider the case of a rectangle because it provides many crucial simplifications. This is not for the sake of clarity of the exposition; we suspect that the case of a general domain requires different arguments. A key point is that this geometric setting and the use of well-chosen controls enable us to guarantee that the boundary layer equations we consider will remain linear and well-posed (see \eqref{eq.V} and \cref{remark-blc}).
 \rev{As we use a flushing strategy in the horizontal direction, we make use of controls on the two lateral sides, and on the two components of the velocity fields, on the contrary to 
\cite{CL,2006} where the case of a small initial data is considered.

Moreover there is a very good chance that the result of  Theorem~\ref{thm.main1} also holds in the case where the domain $\Omega$ is the  parallelepiped 
  $ (0,L_1) \times (0,L_2) \times (-1,1),$ where $L_1, L_2 > 0$, with only two opposite uncontrolled faces on which the no-slip condition is imposed. Yet another highly likely extension is the case where  the domain $\Omega$ is the circular  duct $ D\times (0,L)$, where $D$ is the unit  disk and $L>0$, with controls on the lateral sides $ D \times \{0\}$ and $D \times \{L\}$ and with the no-slip condition on $ T\times (0,L)$. 
 }
\end{remark}

\begin{remark}[Additional properties of the phantom force]
 During the proof, we will check that the phantom force $f_g$ we use has $C^\infty$ regularity by parts with respect to time and $C^\infty$ regularity with respect to space for each time.
 Moreover, we will check that, during the most important step of our strategy (the global approximate control phase, which involves passing through intermediate states of very large size), there exists $\delta > 0$ such that
 \begin{equation}
  \label{f.support}
  \supp f_g(t) \subset [0,L] \times [-1+\delta ,1-\delta ].
 \end{equation}
\rev{In the case where the initial data $u_*$ is smooth, the smoothness of the phantom force
guarantees the existence of a smooth controlled solution satisfying the conclusion of Theorem \ref{thm.main1}.  
Moreover, we think that this also holds true for the two particular 3D settings mentioned above, ruling out the possibility of a blow up in these cases. 
}
\end{remark}

\rev{
\begin{remark}[Comparison with results on wild solutions]
Let us highlight that the solution mentioned in Theorem \ref{thm.main1} is 
 not a « wild » solution. Indeed, let us recall that, in 2D, without control, weak Leray solutions are unique and regular for positive times. The controlled solution that is considered in Theorem \ref{thm.main1} benefits from the same regularity than the one in the classical Leray theory.  This solution  vanishes at some positive time thanks to the action of a well chosen regular source term. This has to be distinguished from the recent results  \cite{BCV,BV2,BV1} where  wild weak solutions (with a regularity less that the one considered by Leray) of the $3$D Navier-Stokes equations are constructed, and these solutions may vanish at some positive finite time too, without any control.
\end{remark}}
\rev{
Finally let us foreshadow that the proof of Theorem~\ref{thm.main1} will make great use of analyticity technics. Let us therefore refer to \cite{HS} for a glimpse of classical and recent analyticity results on the Navier-Stokes equations without control. }

% ==============================================================================
\subsection{Strategy of the proof and plan of the paper}

We explain the strategy of the proof of \cref{thm.main1}, which is divided into three steps. First, we prove that the initial data can be regularized into an analytic function with arbitrary analyticity radius. Then, we prove that a large analytic initial data with a sufficient analyticity radius can be driven approximately to the null equilibrium. Last, we know that small enough states can be driven exactly to the rest state. These three steps are implemented in the three propositions below, where we set the Navier-Stokes equations in the horizontal band 
\begin{equation} \label{band}
 \band := \mathbb{R} \times [-1,1],
\end{equation}
with a control supported in the extended region, outside of $\Omega$. Therefore, we look for solutions to
\begin{equation} \label{eq:ns-fc-fg}
 \left\{
  \begin{aligned}
   \partial_t u + (u \cdot \nabla) u + \nabla p - \Delta u & = f_c + f_g
   && \text{in } (0,T) \times \band, \\
   \diverg u & = 0
   && \text{in } (0,T) \times \band, \\
   u & = 0
   && \text{on } (0,T) \times \partial\band,
  \end{aligned}
 \right.
\end{equation}
where the force $f_c$ is a control supported in $\band\setminus\Omega$ and the force $f_g$ is the phantom (ghost) force supported in $\Omega$. Restricting such solutions of Navier-Stokes in the band to the physical domain $\Omega$ will prove \cref{thm.main1}. We also introduce a domain 
\begin{equation} \label{def-G}
G := [-2L,-L]\times[-1,1] \quad \text{ (see \cref{fig:ghost}).} 
\end{equation}
We will denote by $\ex$ and $\ey$ the unit vectors of the  canonical basis of $\R^2$.

\begin{proposition}[Analytic regularization of the initial data] \label{prop_A}
  Let $T >0$ and $\rho_b > 0$. Let $\uinit \in \ldiv(\bar{\Omega})$ \rev{with $\uinit \cdot e_x = 0$ on $\Gamma_{0}$ and $\Gamma_L$}. For any $k\in \N$ and $\eta_b > 0$, there exists an extension $u_a \in \ldiv(\band)$ of $\uinit$ to the band $\band$, a control force $f_c \in C^\infty([0,T]\times(\band\setminus\bar{\Omega}))$, a phantom force $f_g \in C^\infty([0,T]\times \bar{\Omega})$ satisfying
 \begin{gather}
  \label{fa.small-rest}
  \| f_g \|_{L^1 ((0,T) ; H^k (\Omega))} \leq \eta_b, 
%   \\
%   \label{fa.support-rest}
%   \supp f_g  \subset (0,T) \times [0,L] \times [-1+\delta,1-\delta],
%   \\
%   \label{fc.compactA}
%   \supp f_c \subset (0,T) \times (\band \setminus \bar{\Omega})
 \end{gather}
 a weak Leray solution {$u \in C^0([0,T];\ldiv(\band)) \cap L^2((0,T); H^1(\band))$} to \eqref{eq:ns-fc-fg} associated with the initial data $u_a$, $C_b >0$ and $T_b \leq T$ such that $u_b := u(T_b) \in \ldiv(\band)$  satisfies
 \begin{gather} \label{eq:ub-estimate}
  \| {u_b}_{\rvert G} \|_{H^k(G)} + 
  \| {u_b}_{\rvert\band\setminus\Omega} \|_{L^2(\band\setminus\Omega)} +
  \| {u_b}_{\rvert\Omega} - {\uinit} \|_{L^2(\Omega)} 
  \leq \eta_b,
  \\
  \label{ub.anal}
  \forall m \geq 0, \quad \| \partial_x^m u_b \|_{H^3(\band)} \leq \frac{m!}{\rho_b^m} C_b,
  \\
  \label{ub.int}
  \sum_{0 \leq \alpha + \beta \leq 3} \| \p_x^{\alpha} \p_y^{\beta} u_b \|_{L^1_x(L^2_y)} \leq C_b.
 \end{gather}
\end{proposition}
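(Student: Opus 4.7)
The plan is to take $u_a$ as the divergence-free extension of $u_*$ by zero to the whole band, and to set $u_b$ to be an explicit $x$-mollification of $u_*$ of Gaussian type. Over a short time $T_b \le T$, I would design a Navier--Stokes trajectory from $u_a$ to $u_b$ by using the unconstrained control force $f_c$ outside $\Omega$ and a carefully tuned (small) phantom force $f_g$ inside $\Omega$ to drive the evolution. Since $u_a$ has compact $x$-support and the convolution kernels decay in $x$, all the ``smallness outside $\Omega$'' and integrability conditions on $u_b$ will follow from localization.

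\textbf{Construction of $u_a$ and the target trajectory.} The extension of $u_*$ by zero belongs to $\ldiv(\band)$: the hypothesis $u_* \cdot e_x = 0$ on $\Gamma_0 \cup \Gamma_L$, combined with the tangency condition on $\Gamma_\pm$ built into $\ldiv(\Omega)$, guarantees that the normal trace of $u_*$ vanishes on the full boundary $\partial\Omega$, so that the zero extension remains divergence-free as a distribution. I then introduce a target trajectory $u^{\mathrm{tgt}}(t) := G_{\sigma(t)} *_x u_a$, where $G_\sigma$ denotes the one-dimensional Gaussian kernel in $x$ of variance $\sigma$, with $\sigma(0) = 0$ and $\sigma(T_b) =: \sigma_* > 0$. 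Since the convolution only smooths in $x$, the field $u^{\mathrm{tgt}}(t)$ stays divergence-free and obeys the no-slip condition on $\Gamma_\pm$. Choosing $\sigma_*$ small enough ensures $L^2(\Omega)$-closeness of $u^{\mathrm{tgt}}(T_b)|_\Omega$ to $u_*$; the Gaussian kernel makes $u^{\mathrm{tgt}}(T_b)$ entire in $x$, hence analytic of any radius $\rho_b$ for a suitable $C_b = C_b(\rho_b, \sigma_*)$; and the Gaussian decay in $x$ together with the compact $x$-support of $u_a$ delivers both the smallness of $u_b$ on $G$ and the $L^1_x(L^2_y)$-integrability.

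\textbf{Forces and main obstacle.} Defining $f := \partial_t u^{\mathrm{tgt}} + (u^{\mathrm{tgt}} \cdot \nabla) u^{\mathrm{tgt}} - \Delta u^{\mathrm{tgt}} + \nabla p$ and splitting $f_c := f\,\mathbf{1}_{\band \setminus \bar\Omega}$, $f_g := f\,\mathbf{1}_\Omega$, the field $u^{\mathrm{tgt}}$ is a strong (hence weak Leray) solution of \eqref{eq:ns-fc-fg} with initial data $u_a$. Since $\partial_t u^{\mathrm{tgt}} = \sigma'(t)\,\partial_x^2 u^{\mathrm{tgt}}$, the force reads $f = (\sigma'(t) - 1)\partial_x^2 u^{\mathrm{tgt}} - \partial_y^2 u^{\mathrm{tgt}} + (u^{\mathrm{tgt}} \cdot \nabla) u^{\mathrm{tgt}} + \nabla p$. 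The pivotal estimate is $\|f_g\|_{L^1(0, T_b; H^k(\Omega))} \le \eta_b$. The main obstacle is the $\partial_y^2 u^{\mathrm{tgt}}$ contribution, which requires $y$-regularity of $u_*$ that does not hold for general $L^2$ data. I would resolve this by splitting $[0,T_b]$ into two phases: a brief first phase where the genuine Navier--Stokes flow is used (with $f_g = 0$), during which parabolic smoothing endows $u$ with arbitrary interior $y$-regularity; followed by a second phase implementing the $x$-mollification above, starting from the now-smooth intermediate state. The $L^1(H^k)$-norm of $f_g$ is then essentially proportional to $\sigma_*$ (controlled by Young's inequality for the convolution and the smoothness gained in phase one), and can be driven below $\eta_b$ by choosing $\sigma_*$ small enough.
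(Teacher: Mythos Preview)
Your overall strategy—free Navier--Stokes evolution to gain interior regularity, followed by a tangential smoothing operator to force analyticity in $x$, with the phantom force defined as the Navier--Stokes residual—is the same as the paper's. The paper uses a smooth spectral cutoff $\mathbf{P}_N$ instead of Gaussian convolution, but that difference is cosmetic. What is \emph{not} cosmetic is that the paper inserts an additional step you are missing: between the free evolution and the tangential smoothing, it applies an $x$-localization via a cutoff $\theta$ (equal to $1$ on $[0,L]$, vanishing for $x\notin[-L,2L]$) at the level of the stream function. Because $\theta\equiv 1$ on $\Omega$, the force generated by this step is supported in $\band\setminus\bar\Omega$ and is therefore pure control, not phantom. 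This step is what makes the pre-smoothed state compactly supported in $x$.

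Your justification of \eqref{ub.int} has a genuine gap precisely here. You write that ``the Gaussian decay in $x$ together with the compact $x$-support of $u_a$'' yields the $L^1_x(L^2_y)$ bound, but after your phase one the state $w=v(t_1)$ is no longer compactly supported in $x$, and Gaussian $x$-convolution of a merely $L^2$ function does not land in $L^1_x(L^2_y)$ (Young gives $L^1*L^2\subset L^2$, not $L^1$). To salvage your route you would need a decay estimate showing $\partial_x^\alpha\partial_y^\beta v(t_1)\in L^1_x(L^2_y)$ for $\alpha+\beta\le 3$, which is a nontrivial statement about 2D Navier--Stokes in the band that you do not address. The same missing localization also underlies the paper's clean proof of smallness of $u_b|_G$ in $H^k$ (since $\theta=0$ on $G$, the limit vanishes there), whereas your argument would require quantitative $H^k$ decay of $v(t_1)$ on $G$. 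A smaller point: $\|f_g\|_{L^1H^k}$ is not ``essentially proportional to $\sigma_*$''—the Laplacian and nonlinear residuals contribute $O(T_b-t_1)$, not $O(\sigma_*)$; both can be taken small once $t_1$ is fixed, but the dependence should be stated correctly.
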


\begin{proposition}[Global approximate null controllability from any analytic initial data] \label{prop_B}
 Let $T > 0$. There exists $\rho_b > 0$ such that, for every $\sigma > 0$ and each $u_b \in \ldiv(\band)$ for which there exists $C_b > 0$ such that \eqref{ub.anal} and \eqref{ub.int} hold, for every $k\in\N$ and $\delta\in(0,\frac{1}{2})$, there exist two forces $f_c \in C^\infty([0,T]\times (\band \setminus \bar \Omega))$ and $f_g \in C^\infty([0,T]\times \bar{\Omega})$ satisfying
 \begin{gather}
  \label{f.small-rest}
  \| f_g \|_{L^1 ((0,T) ; H^k (\Omega))} \leq C_{k,\delta} \| {u_b}_{\rvert G} \|_{H^k(G)} , \\
  \label{f.support-rest}
  \supp f_g  \subset (0,T) \times [0,L] \times [-1+\delta ,1-\delta ],
  \\
  \label{fc.compactB}
  \supp f_c \subset (0,T) \times \band \setminus \bar{\Omega}
 \end{gather}
 and a weak solution $u \in C^0([0,T];\lloc(\band)) \cap L^2((0,T); H^1_{\mathrm{loc}}(\band))$ to \eqref{eq:ns-fc-fg} associated with the initial data $u_b$, such that there exists $T_c \leq T$ such that $u_c := u(T_c) \in \lloc(\band)$ satisfies
 \begin{equation} \label{eq:uc-estimate}
  \| {u_c}_{\rvert \Omega} \|_{L^2(\Omega)} \leq \sigma + \| {u_b}_{\rvert \{ |y| \geq 1-2\delta\}} \|_{L^2(\band)}.
 \end{equation} 
 Moreover, the constant $C_{k,\delta}$ only depends on $k$ and $\delta$.
\end{proposition}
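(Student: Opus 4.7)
The plan is to combine the return method of the first author with a vanishing-viscosity rescaling, in the spirit of the treatment of the Navier slip-with-friction case in~\cite{2016arXiv161208087C}, while leveraging the horizontal analyticity~\eqref{ub.anal} of $u_b$ in order to absorb the thicker no-slip boundary layer. After a time and amplitude rescaling $u^\e(t,x) = \e\, u(\e t, x)$ with a small parameter $\e > 0$, the system~\eqref{eq:ns-fc-fg} is converted into one of the same form on $[0,T]$ with effective viscosity $\e$ and initial data $\e u_b$, so the no-slip boundary layers near $\Gamma_\pm$ have thickness $\sqrt{\e}$. The idea is to impose, through the control $f_c$ supported in $\band \setminus \bar\Omega$, a reference Euler flow that flushes the entire slab $\Omega$ across $\Gamma_L$ in time $T_c \leq T$, and to arrange that the boundary layer it generates is also almost entirely carried out of $\Omega$ by that time.

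Concretely, I would construct a smooth time-dependent harmonic potential $\theta(t,x,y)$, vanishing at $t = 0$ and $t = T$, such that $u^0 := \nabla \theta$ is purely horizontal on $\{ |y| \geq 1 - \delta'\}$, equal there to a function $\alpha(t)\, \ex$ independent of $x$, and whose flow pushes $\bar\Omega$ outside $\bar\Omega$ strictly before $T_c$. The resulting ansatz reads
\begin{equation*}
 u^\e(t,x,y) = u^0(t,x,y) + \vbl\bigl(t, x, (1 \mp y)/\sqrt\e\bigr) + \sqrt\e\, r^\e(t,x,y),
\end{equation*}
where $\vbl$ is a tangential Prandtl-type profile localized near each wall. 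Thanks to the choice of $u^0$ that is constant in $x$ in the boundary strip, $\vbl$ satisfies a \emph{linear} Prandtl equation of the form $\p_t \vbl - \p_Y^2 \vbl + \alpha(t)\, \p_x \vbl = 0$ with initial data matching the trace of $\e u_b$ on the boundary strip. This linearization, specific to the rectangular geometry, sidesteps the usual ill-posedness of the full Prandtl system in Sobolev spaces.

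The main technical step, and the main obstacle, is to close a long-time nonlinear Cauchy-Kovalevskaya estimate for $(\vbl, r^\e)$ using only horizontal analyticity, in the spirit of~\cite{MR2145938, MR3464051}. Starting from~\eqref{ub.anal} and~\eqref{ub.int}, I would work with generating series $\sum_{m \geq 0} \rho(t)^m / m! \, \| \p_x^m \cdot \|_{X}$ for a suitable anisotropic functional space $X$, with a radius $\rho(t)$ decreasing slowly in time; the decrease compensates the loss of one $x$-derivative in the transport and nonlinear terms, à la Alinhac--Métivier. The horizontal $L^1_x(L^2_y)$ integrability~\eqref{ub.int} is crucial to convert horizontal advection into a true dissipation mechanism at the level of the analytic energy. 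Choosing $\rho_b > 0$ large enough at the outset then ensures $\rho(t) > 0$ on the whole interval $[0,T]$.

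Finally, well-prepared dissipation applied to the linear Prandtl equation, combined with the flushing property of $u^0$, implies that at time $T_c$ the boundary layer $\vbl$ has left $\Omega$ except for the contribution initially concentrated in the strip $\{|y| \geq 1 - 2\delta\}$, which gives precisely the second term on the right-hand side of~\eqref{eq:uc-estimate}. The smallness parameter $\sigma$ is attained by choosing $\e$ small enough so that $\sqrt\e\, r^\e$ and the higher-order residuals are controlled in $L^2(\Omega)$. The phantom force $f_g$, supported in $[0,L] \times [-1+\delta, 1-\delta]$, is used only to cancel the smooth remainder inside $\Omega$, and since it is composed of smooth correctors of $u^0$ and $\vbl$, it inherits both the support property~\eqref{f.support-rest} and the $L^1_t H^k_x$ smallness bound~\eqref{f.small-rest}.
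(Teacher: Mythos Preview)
Your proposal captures the broad strokes---return method, vanishing-viscosity scaling, horizontal analyticity to close a Cauchy--Kovalevskaya estimate---but misidentifies several key structural points, and as written the argument would not close.

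\textbf{The boundary layer is not flushed; it dissipates.} In the paper's ansatz the base Euler flow is simply $u^0(t)=h(t)\,\ex$, constant in space, so the boundary layer correction $V(t,z)$ solves the \emph{pure heat equation} $\partial_t V-\partial_{zz}V=0$ with boundary data $h(t)$ and zero initial data; it depends only on $(t,z)$, not on $x$, and therefore cannot be carried out of $\Omega$ by the flow. Your claim that ``at time $T_c$ the boundary layer $\vbl$ has left $\Omega$'' cannot work. Instead, the paper imposes vanishing moments $\int_0^T t^j h(t)\,\dt=0$ for $0\le j<n$, which forces $V(t,\cdot)$ to decay like a negative power of $t$ in weighted Sobolev norms (this is the well-prepared dissipation). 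To exploit this decay one must run the rescaled equation on a \emph{long} interval $[0,T/\e^\kappa]$ with $\kappa\in(0,1)$, not $[0,T]$; without the long time the boundary layer term is $O(\e^{1/4})$ at best and does not vanish.

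\textbf{The initial data and the phantom force.} The data $\e u_b$ is not absorbed into the boundary layer profile; it is carried by a separate term $\e u^1$ which is (essentially) $\tau_h u_b$ transported by $h$, killed on $[T/3,2T/3]$ by an explicit analytic force $\e f^1$. The phantom force $f_g$ is \emph{exactly} the restriction $f^1|_\Omega$: since $u_b$ is analytic in $x$ it is not compactly supported, so after translation by $2L$ the force leaks into $\Omega$ with size governed by $\|u_b|_G\|_{H^k(G)}$---this is precisely~\eqref{f.small-rest}. Your description of $f_g$ as cancelling ``the smooth remainder inside $\Omega$'' misses this mechanism entirely and would not yield the $H^k(G)$ bound. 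Likewise, the term $\|u_b|_{\{|y|\ge 1-2\delta\}}\|$ in~\eqref{eq:uc-estimate} is not a leftover boundary layer: it is the part of $u^1$ near $\Gamma_\pm$ that the force $f^1$ deliberately leaves untouched (via a cutoff $\chi_\delta$) so that $\supp f_g$ stays $\delta$-away from the walls.
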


\begin{proposition}[Local null controllability] \label{prop_C}
 Let $T > 0$. There exists $\sigma > 0$ such that, for any $u_c \in \ldiv(\Omega)$ which satisfies
 \begin{equation} \label{hyp.uc}
  \| {u_c}_{\rvert \Omega} \|_{L^2(\band)} \leq 3 \sigma,
 \end{equation}
 there exists a weak solution $u \in C^0([0,T];\ldiv(\Omega)) \cap L^2((0,T); H^1(\Omega))$ to \eqref{eq.ns} with the initial data $u_c$ and $f_g = 0$, which satisfies $u(T) = 0$.
\end{proposition}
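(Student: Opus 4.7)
The plan is to reduce Proposition~\ref{prop_C} to a classical local null controllability result for the 2D Navier-Stokes equation with an \emph{interior} control, by embedding $\Omega$ into a slightly larger rectangle $\tilde\Omega := (-\varepsilon, L+\varepsilon) \times (-1,1)$ for some fixed $\varepsilon > 0$. The two thin vertical strips making up $\omega := \tilde\Omega \setminus \bar\Omega$ then provide room for an interior control on $\tilde\Omega$ which replaces the lateral boundary controls on $\Gamma_0 \cup \Gamma_L$; after solving the interior-control problem on $\tilde\Omega$ with no-slip on $\partial\tilde\Omega$, restricting back to $\Omega$ should produce the desired trajectory.

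First I would construct a bounded linear extension $E : \ldiv(\Omega) \to \ldiv(\tilde\Omega)$ via the stream function. Since $\Omega$ is simply connected, write $u_c = \nabla^\perp \psi_c$ with $\psi_c \in H^1(\Omega)$; the tangency of $u_c$ to $\Gamma_\pm$ forces $\psi_c$ to take two constant values on the top and bottom of $\Omega$, differing by the horizontal flux $\Phi := \int_{-1}^1 u_{c,1}(x,\cdot)\dif y$, which is independent of $x$ by incompressibility. Extending $\psi_c$ by even reflection across the vertical lines $x=0$ and $x=L$ yields $\tilde\psi_c \in H^1(\tilde\Omega)$ with the same constant traces on $y = \pm 1$ and $\|\nabla\tilde\psi_c\|_{L^2(\tilde\Omega)} \leq C \|u_c\|_{L^2(\Omega)}$. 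Setting $\tilde u_c := \nabla^\perp \tilde\psi_c$ gives a divergence-free extension in $\ldiv(\tilde\Omega)$ with $\|\tilde u_c\|_{L^2(\tilde\Omega)} \leq C \|u_c\|_{L^2(\Omega)}$. I would then invoke a 2D interior local null controllability theorem with $L^2$ initial data, such as the one of Fern\'andez-Cara, Guerrero, Imanuvilov and Puel~\cite{MR2103189}, on $\tilde\Omega$ with no-slip condition on $\partial\tilde\Omega$ and control region $\omega$: this provides $\sigma_0 > 0$ such that any $\tilde u_c \in \ldiv(\tilde\Omega)$ with $\|\tilde u_c\|_{L^2(\tilde\Omega)} \leq \sigma_0$ is steered to zero at time $T$ by a source $h$ with $\supp h \subset (0,T) \times \omega$. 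Picking $\sigma := \sigma_0/(3C)$, the restriction $u := \tilde u|_{(0,T) \times \Omega}$ satisfies~\eqref{eq.ns} on $\Omega$ with $f_g = 0$ (because $\supp h$ is disjoint from $\bar\Omega$), preserves the no-slip condition on $\Gamma_\pm$, equals $u_c$ at $t=0$ and vanishes at $t=T$, and the traces of $\tilde u$ on $\Gamma_0 \cup \Gamma_L$ play the role of the implicit lateral boundary controls in the weak formulation~\eqref{int}.

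The main technical obstacle is confirming that the cited interior controllability result applies in the precise form we need, namely for arbitrary $L^2$ divergence-free initial data in 2D with a vectorial control supported in a prescribed open subset and driving exactly to rest. For 2D Navier-Stokes this is essentially standard, combining Imanuvilov-type Carleman estimates for the linearised Stokes system, a fixed-point argument for the nonlinear term, and the smoothing of 2D weak Leray solutions; but a careful reading of the reference is required to match the exact statement. Should an $L^2$ version prove difficult to extract as such, an alternative is to first run the Navier-Stokes equation on $\tilde\Omega$ without control for a short time $\tau$: since the $L^2$ norm is non-increasing while $u(\tau)$ instantly becomes smooth by 2D parabolic regularisation, one could then apply a cruder $H^1$-based local controllability result such as~\cite{MR1422385} on the interval $(\tau, T)$.
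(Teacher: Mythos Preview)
The paper gives no proof of Proposition~\ref{prop_C}: it is simply declared a direct consequence of the local controllability literature listed in Remark~\ref{rk:local}, with an alternative phantom-force shortcut sketched in Remark~\ref{rk:no_carleman}. Your domain-extension reduction to an interior-control problem is precisely the standard mechanism underlying several of those references, so the plan is sound and in fact more explicit than what the paper provides.

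There is, however, one concrete gap in your extension step that you should address. For a generic $u_c\in\ldiv(\Omega)$ the horizontal flux $\Phi=\int_{-1}^{1}u_{c,1}(x,y)\,\dif y$ need not vanish, and when $\Phi\neq0$ it is \emph{impossible} to extend $u_c$ to a divergence-free field on the simply connected rectangle $\tilde\Omega$ with zero normal trace on all of $\partial\tilde\Omega$. Your reflected stream function $\tilde\psi_c$ still carries the two distinct constants $c_\pm$ on $\{y=\pm1\}$; since $\partial\tilde\Omega$ is connected, $\tilde\psi_c$ cannot be constant on it, and $\tilde u_c=\nabla^\perp\tilde\psi_c$ then fails $\tilde u_c\cdot\ex=0$ on the new vertical walls $x=-\varepsilon$ and $x=L+\varepsilon$. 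Hence $\tilde u_c$ is not an admissible initial datum for the interior-control result of~\cite{MR2103189} on $\tilde\Omega$ with full no-slip, and the argument as written does not close. The repair is routine but must be stated: the cleanest option is to bypass the extension altogether and invoke directly a \emph{boundary}-control result such as Fursikov--Imanuvilov~\cite{MR1422385} (combined with your short smoothing step to reach $H^1$, which is exactly the variant the paper's Remark~\ref{rk:local} points to); alternatively one may split off the flux via $u_c=\Phi\cdot\tfrac{3}{4}(1-y^2)\,\ex+v_c$, extend the zero-flux part $v_c$ as you propose, and treat the explicit Poiseuille component through local controllability to trajectories rather than to rest.
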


Let us prove that the combination of these three propositions implies \cref{thm.main1}. First, thanks to standard arguments, it is sufficient to prove \cref{thm.main1} for initial data satisfying \rev{$\uinit\cdot\ex = 0$ on $\Gamma_0$ and $\Gamma_L$}. Indeed, applying vanishing boundary controls on the original system for any positive time guarantees that the state gains this property. Therefore, we assume that the initial data already satisfies this property. We fix the quantities step by step in the following manner.
\begin{itemize}
 \item Let $T > 0$ and $\uinit \in \ldiv(\Omega)$ satisfying \rev{$\uinit\cdot\ex = 0$ on $\Gamma_0$ and $\Gamma_L$}.
 \item Let $\rho_b > 0$ be given by \cref{prop_B} for a time interval of length $T/3$.
 \item Let $\sigma > 0$ be given by \cref{prop_C} for a time interval of length $T/3$.
 
 \item Let $\delta_1 \in (0,\frac{1}{2})$ small enough such that $\| {({\uinit})}_{\rvert \{ |y| \geq 1-2\delta_1\}} \|_{L^2(\Omega)} \leq \sigma$.
 
 % \item \del{Let $\delta_2 \in (0,\frac{1}{2})$ be given by ?? for a time interval of length $T/3$ and $\rho_b$.}
 
 %\item \del{Let $\delta := \min \{ \delta_1, \delta_2 \}$}.
 
 \item Let $k \in \N$ and $\eta > 0$.
 \item Let $\eta_b := \min \{ \eta/2, \sigma, \eta / (2C_{k,\delta_1}) \}$.
 
 \item  We apply \cref{prop_A} with a time interval of length $T/3$, $\rho_b$, %\del{$\delta_2$, }
 $k$ and $\eta_b$. Hence, there exists $T_b \leq T/3$ and a solution $u$ defined on $[0,T_b]$ with $u(0)_{\rvert \Omega} = \uinit$ and such that $u_b := u(T_b)$ satisfies \eqref{eq:ub-estimate}, \eqref{ub.anal} and \eqref{ub.int}.
 
 \item We apply \cref{prop_B} with a time interval of length $T/3$, $\rho_b$, $\delta_1$, $k$ and $\sigma$. This yields a solution $u$ defined on $[T_b,T_c]$ with $T_c \leq T_b+T/3 \leq 2T/3$ such that $u_c := u(T_c)$ satisfies \eqref{eq:uc-estimate}.
 
 \item By triangular inequality, $\| {u_b}_{\rvert \{ |y| \geq 1-2\delta_1\}} \|_{L^2(\band)} \leq \| {u_b}_{\rvert \band\setminus\Omega} \|_{L^2(\band)} 
 + \| {u_b}_{\rvert\Omega} - {\uinit} \|_{L^2(\Omega)} + \| ({\uinit})_{\rvert \{ |y| \geq 1-2\delta_1\}} \|_{L^2(\Omega)}$. Since $\eta_b \leq \sigma$, \eqref{eq:ub-estimate} and \eqref{eq:uc-estimate} imply that \eqref{hyp.uc} holds.
 
 \item Finally, we apply \cref{prop_C}. This yields a solution $u$ defined on $[T_c,T_3]$ with $T_3 := T_c + T/3 \leq T$ such that $u(T_3) = 0$.
 
 \item The concatenated forces $f_c$ and $f_g$ are $C^\infty$ by parts in time with $C^\infty$ regularity in space.
 % \del{ The concatenated control force $f_c$ has $C^\infty$ regularity in time even at the junctions thanks to ??, ?? and ??. The concatenated phantom force $f_g$ has $C^\infty$ regularity in time at the junctions thanks to ?? and ??. These support properties also imply ??.} Since $\eta_b \leq \eta / 2$, \eqref{fa.small-rest} and \eqref{f.small-rest} imply $\|f_g\|_{L^1((0,T_3),H^k(\Omega))} \leq \eta$ so that \eqref{f.small} holds.
 
 \item This concludes the proof of \cref{thm.main1} up to extending the solution and the forces by $0$ on $[T_3,T]$.
 
\end{itemize}

\begin{figure}[ht!]
 \centering
 \includegraphics{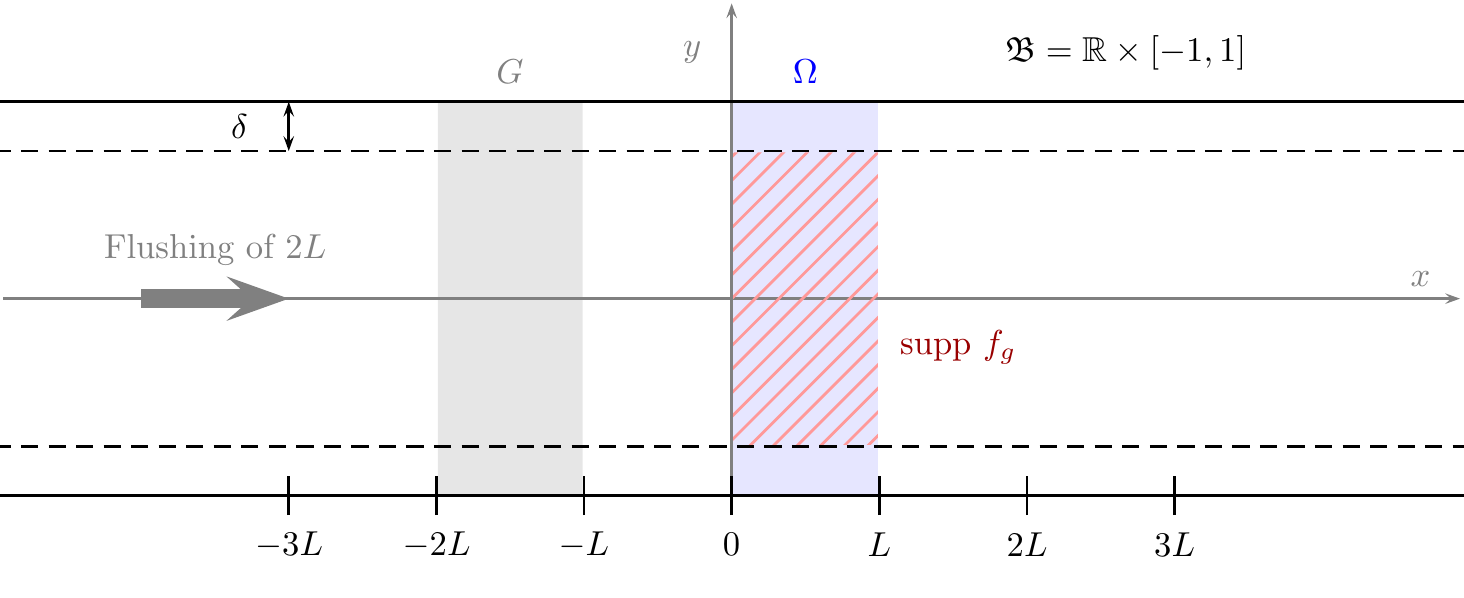}
 \caption{The horizontal band $\band = \mathbb{R} \times [-1,1]$, the physical domain $\Omega = [0,L]\times[-1,1]$ and the domain $G = [-2L,-L]\times[-1,1]$.}
 \label{fig:ghost}
\end{figure}

\begin{remark}
 The fact that, starting from a finite energy initial data, the solution to the Navier-Stokes equation instantly becomes analytic is well-known. However, in the uncontrolled setting, the analytic radius only grows like $\sqrt{t}$. In \cref{prop_A}, {we use the phantom force to enhance the regularization in short time.} 

 \rev{It would be desirable to know if \cref{prop_A} holds with a phantom force $f_g$ satisfying a support condition such as \eqref{f.support-rest} of \cref{prop_B}, since it would imply that \cref{thm.main1} holds with a phantom force whose support never touches the boundary $\Gamma_\pm$.}
\end{remark}

\begin{remark}
 The small-time global approximate null controllability result of \cref{prop_B} will be proved thanks to a return-method argument (see \cref{sec:strategy}). A base flow will shift the whole band~$\band$ of a distance $2L$ towards the right. Roughly speaking, the main part of the initial data ${u_b}_{\rvert\Omega}$ will then be outside of the physical domain and killed by a control force. However, since we need to work in an analytic setting (to establish estimates for a PDE with a derivative loss, see \cref{sec:r}), this action cannot be exactly localized outside of the physical domain. Its leakage inside the physical domain will be related to the values of $u_b$ in $G$ and lead to the unwanted phantom force. This explains estimate \eqref{f.small-rest}. Of course, since $u_b$ is analytic in the tangential direction $x$, it cannot satisfy ${u_b}_{\rvert G} = 0$.
\end{remark}

\rev{
\begin{remark} \label{rk:no_carleman}
 \cref{prop_C} is a direct consequence of known results concerning the small-time local null controllability of the Navier-Stokes equation (see \cref{rk:local} references). In our context involving phantom forces, one can avoid the use of these technical local results relying on Carleman estimates thanks to the following alternative statement, leveraging the phantom force to drive a small data $u_c$ exactly to zero: ``Let $T, \eta > 0$ and $k \in \N$. There exists $\sigma > 0$ such that, for any $u_c \in \ldiv(\Omega)$ satisfying \eqref{hyp.uc}, there exists a phantom force $f_g \in C^\infty([0,T]\times\bar\Omega)$ satisfying \eqref{f.small} and a weak solution $u \in C^0([0,T];\ldiv(\Omega)) \cap L^2((0,T);H^1(\Omega))$ to \eqref{eq.ns} with the initial data $u_c$, which satisfies $u(T) = 0$.'' The sketch of proof of this result would be as follows. Let $\bar u$ be the solution to the free Navier-Stokes equation on $[0,T]$ starting from $\bar{u}(0) = u_c$. One constructs explicitly $u$ as $u(t) := \bar{u}(t)$ for $t \leq \frac T 2$ and $u(t) := \beta(t) \bar{u}(\frac T 2)$, where $\beta \in C^\infty([\frac T 2,T];[0,1])$ with $\beta(\frac T 2) = 1$ and $\beta(T) = 0$. Plugging this explicit formula in \eqref{eq.ns} gives a definition of the phantom force $f_g$ that one needs to use. Hence, there exists $C_{k,\beta} > 0$ such that, if $u_c$ is small enough, then $\|f_g\|_{L^1((0,T);H^k(\Omega))} \leq C_{k,\beta} \| \bar{u}(\frac T 2) \|_{H^{k+2}(\Omega)}$. Moreover, thanks to classical regularization result for the 2D Navier-Stokes equation, there exists $C_k(T) > 0$ such that $\| \bar{u}(\frac T 2) \|_{H^{k+2}(\Omega)} \leq C_k(T) \| u_c \|_{L^2(\Omega)}$. Hence, choosing $3 \sigma C_k(T) C_{k,\beta} \leq \eta$ concludes the proof. 
\end{remark}
}

\cref{prop_A} is proved in \cref{sec:local}. \cref{prop_B} is the main contribution of this paper. We explain our strategy to prove this small-time global approximate null controllability result in \cref{sec:strategy}. It is based on the construction of approximate trajectories and the \emph{well-prepared dissipation method}. We give estimates concerning the boundary layer in \cref{sec:dissipation}, estimates concerning the remainder in \cref{sec:r} and finally analytic estimates on the approximate trajectories themselves as an appendix in \cref{sec:sources}.

% ====================================================================================================
\section{Regularization enhancement}
\label{sec:local}
% ====================================================================================================

This section is devoted to the proof of \cref{prop_A}.

\subsection{Fourier analysis in the tangential direction}

We introduce a few notations that will be used throughout this paper.

\paragraph{Tangential Fourier transform.} To perform analytic estimates in the tangential direction, we use Fourier analysis in the tangential direction. For $a \in L^2(\band)$, we will denote its Fourier transform with respect to the tangential variable by $\mathcal{F}a$ and define it as
\begin{equation} \label{eq:FOURIER}
 \mathcal{F}a (\xi, y) := \int_{\R_x} a(x,y) e^{-i x \xi} \dd x.
\end{equation}
We define similarly the reciprocal Fourier transform $\mathcal{F}^{-1}$, which obviously also acts only on the tangential variable.

\newcommand{\pn}{\mathbf{P}_N}

\paragraph{Band-limited functions.} Let $N > 0$. We will sometimes need to consider functions in $L^2(\band)$ whose Fourier transform is supported within the set of tangential frequencies~$\xi$ satisfying $|\xi| \leq N$. Therefore, we introduce the Fourier multiplier
\begin{equation} \label{PN}
 \pn(\xi) := \mathbf{1}_{[-N,N]}(\xi)
\end{equation}
and the associated functional space
\begin{equation}
 L^2_N(\band) := \left\{ a \in L^2(\band); \enskip a = \pn a \right\}.
\end{equation}
For any $k\in\N$ and $a\in H^k(\band)$, it is clear that
\begin{equation} \label{pnhk}
 \| \pn a -  a \|_{H^k(\band)} \to 0
 \quad \text{as} \quad 
 N \to +\infty.
\end{equation}

\subsection{Regularization enhancement using the phantom and the control}
 
We start with the following lemma concerning the possibility to remove high tangential frequencies from a smooth initial data. We denote by $\mathbb{P}$ the usual Leray projector on divergence free vector fields, tangent to $\partial\band$.

\begin{lemma} \label{thm:secours}
 There exists a geometric constant $C_\theta > 0$ such that the following result holds. Let $u_a \in \ldiv(\band)$ and $T_b > 0$. We denote by $\widetilde{u}_a$ the solution of the free Navier-Stokes equation at time $T_b$ starting from $u_a$.
 There exists a family indexed by $N > 0$ of vector fields $u_N \in C^0([0,T_b];\ldiv(\band))\cap L^2((0,T_b);H^1(\band))$ associated with forces $f_N \in C^0([0,T_b];H^k(\band))$, which are weak Leray solutions to
 \begin{equation}
  \label{good1}
   \partial_t u_N - \Delta u_N + \mathbb{P} [ (u_N\cdot\nabla) u_N ] = \mathbb{P} f_N,
   \quad
   u_N(0) = u_a
 \end{equation}
 and satisfy, for any $k\in\N$ and $\rho_b > 0$,
 \begin{gather}
  \label{good2}
  \| (f_N)_{\rvert \Omega} \|_{L^1((0,T_b);H^k(\Omega))} \underset{N\to+\infty}{\longrightarrow} 0,
  \\
  \label{good3}
  f_N \in C^\infty([0,T_b]\times\band),
  \\
  \label{good4_omega}
  \| (u_N(T_b) - \widetilde{u}_a)_{\rvert \Omega} \|_{L^2(\Omega)} \underset{N\to+\infty}{\longrightarrow} 0,
  \\
  \label{good4_G}
  \| (u_N(T_b))_{\rvert G} \|_{H^k(G)} \underset{N\to+\infty}{\longrightarrow} 0,
  \\
  \label{good4bis}
  \| (u_N(T_b))_{\rvert \band\setminus\bar{\Omega}} \|_{L^2(\band\setminus\bar\Omega)}
  \leq 
  C_\theta \| (\widetilde{u}_a)_{\rvert \band\setminus\bar{\Omega}} \|_{L^2(\band\setminus\bar\Omega)}
  + \underset{N\to+\infty}{o}(1),
  \\
  \label{good5}
  \exists C_N > 0, \quad 
  \sup_{m\geq 0} \| \partial_x^m u_N(T_b) \|_{H^3(\band)}
  \leq \frac{m!}{\rho_b^m} C_N,
  \\
  \label{good6}
  \exists C_N > 0, \quad 
  \sum_{0\leq\alpha+\beta\leq 3} \|\p_x^\alpha \p_y^\beta u_N(T_b)\|_{L^1_x(L^2_y)} \leq C_N.
 \end{gather}
\end{lemma}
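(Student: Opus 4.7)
The plan is to construct $u_N$ by interpolating in time between the free Navier--Stokes evolution of $u_a$ and a tangentially band-limited, spatially localized target state $u_N^*$ at time $T_b$, then read off $f_N$ from the equation. Let $\bar u(t)$ denote the uncontrolled Leray solution on $[0,T_b]$ starting from $u_a$; by classical parabolic regularization for the 2D Navier--Stokes equation, $\bar u \in C^\infty((0,T_b]\times\bar\band)$. Working with the stream function $\bar\psi(T_b,x,y) := \int_{-1}^y \bar u_1(T_b,x,y')\dd y'$, which is constant on each boundary $y=\pm 1$, and picking a spatial cutoff $\eta \in C^\infty(\mathbb{R};[0,1])$ with $\eta \equiv 1$ on $[0,L]$ and $\supp\eta\subset(-L,2L)$ (so in particular $\eta\equiv 0$ on $G$), I would define $\psi_N^* := \eta\,\pn\bar\psi(T_b)$ and $u_N^* := \nabla^\perp\psi_N^*$, assuming for the moment that the flux $c := \bar\psi(T_b,x,1)$ vanishes. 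Then $u_N^* \in \ldiv(\band)$, $u_N^* = 0$ on $\partial\band$, $u_N^*$ is band-limited at frequency $N$ in $x$ with compact $x$-support in $\supp\eta$, it vanishes identically on $G$, and on $\Omega$ it coincides with $\pn\bar u(T_b)$.

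Next I would pick $\tau_N\downarrow 0$ and a smooth cutoff $\theta_N\colon[0,T_b]\to[0,1]$ that is flat at zero on $[0,T_b-\tau_N]$ and equal to $1$ at $T_b$, set $u_N(t) := (1-\theta_N(t))\bar u(t) + \theta_N(t) u_N^*$, and define $f_N := \partial_t u_N - \Delta u_N + \mathbb{P}[(u_N\cdot\nabla)u_N]$. The initial condition $u_N(0)=u_a$ and the terminal identity $u_N(T_b) = u_N^*$ are immediate; divergence-freeness and tangency are preserved by the convex combination; and $f_N \equiv 0$ on $[0,T_b-\tau_N]$ because $\bar u$ solves the uncontrolled Navier--Stokes equation there, which yields \eqref{good1} and \eqref{good3}.

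To verify \eqref{good2}, I would use the identity $\partial_t\bar u-\Delta\bar u = -\mathbb{P}[(\bar u\cdot\nabla)\bar u]$ to cancel the leading contribution and reduce $f_N$ on $[T_b-\tau_N,T_b]$ to
\begin{equation*}
f_N = \theta_N'\,(u_N^* - \bar u) - \theta_N\Delta u_N^* + \theta_N\cdot\bigl(\text{bilinear terms involving }\bar u\text{ and }u_N^*\bigr).
\end{equation*}
Since $u_N^*|_\Omega = \pn\bar u(T_b)$ and $\bar u(T_b)$ is $C^\infty$ on $\bar\Omega$, the $L^1_t H^k(\Omega)$-norm of the first summand is bounded by $\|\pn\bar u(T_b)-\bar u(T_b)\|_{H^k(\Omega)}+\sup_{[T_b-\tau_N,T_b]}\|\bar u(T_b)-\bar u(t)\|_{H^k(\Omega)}$, both vanishing as $N\to\infty$ and $\tau_N\to 0$. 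The other two summands are uniformly bounded in $H^k(\Omega)$ in $N$, again by smoothness of $\bar u(T_b)$, and get multiplied by $\tau_N$, so they also vanish. The remaining final-state properties then follow from the structure of $u_N^*$: \eqref{good4_omega} and \eqref{good4_G} from the Fourier truncation and the spatial cutoff respectively; \eqref{good4bis} from $\|\pn\bar u\|_{L^2(\band\setminus\bar\Omega)}\leq\|\bar u\|_{L^2(\band\setminus\bar\Omega)}+o_N(1)$ combined with a Poincaré-type bound $\|\bar\psi\|_{L^2(\band\setminus\bar\Omega)}\lesssim\|\bar u\|_{L^2(\band\setminus\bar\Omega)}$; \eqref{good5} from the band-limitation via $\|\partial_x^m u_N^*\|_{H^3(\band)}\leq N^m\|u_N^*\|_{H^3(\band)}\leq (m!/\rho_b^m)\,e^{N\rho_b}\|u_N^*\|_{H^3(\band)}$, which yields any prescribed analytic radius $\rho_b$; and \eqref{good6} from the compact $x$-support inside $\supp\eta$.

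The main technical obstacle is the flux bookkeeping when $u_a$ has nonzero flux $c\neq 0$: then $(u_N^*)|_G$ cannot vanish, which conflicts with \eqref{good4_G}. In the application to \cref{prop_A} this is automatic because the hypothesis $u_*\cdot e_x = 0$ on $\Gamma_0\cup\Gamma_L$ forces $c=0$ and any reasonable extension $u_a$ inherits this. For a general $u_a$, one must either prepend a short subinterval on which a body force supported outside $\Omega$ flushes the flux, or modify the target stream function by subtracting a fixed shear profile $c\rho(y)$ and absorbing the mismatch into the control region $\band\setminus\bar\Omega$. Neither variant requires additional ideas, but both demand careful tracking of constants in the $\tau_N$--$N$ balance.
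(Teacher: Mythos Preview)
There is a genuine gap in your argument for \eqref{good5}. You claim that $u_N^* = \nabla^\perp(\eta\,\pn\bar\psi(T_b))$ is ``band-limited at frequency $N$ in $x$ with compact $x$-support in $\supp\eta$,'' but a nonzero function cannot have both properties simultaneously (Paley--Wiener). Multiplying by the cutoff $\eta$ \emph{after} the Fourier truncation $\pn$ convolves the spectrum with $\hat\eta$, which is not compactly supported; so $u_N^*$ is not band-limited, and the bound $\|\partial_x^m u_N^*\|_{H^3}\le N^m\|u_N^*\|_{H^3}$ you invoke for \eqref{good5} is unavailable. Worse, $\partial_x^m u_N^*$ contains the terms $\partial_x^j\eta\cdot\partial_x^{m-j}\pn\bar u$, and since a compactly supported $C^\infty$ cutoff is never real-analytic, no bound of the form $m!/\rho_b^m$ can hold. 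The fix, which is what the paper does, is to reverse the order: first localize via the stream function, then truncate in frequency, setting the terminal state to $\pn\nabla^\perp(\eta\,\bar\psi(T_b))$. This is genuinely band-limited, hence entire in $x$, and \eqref{good5} follows for any $\rho_b$; properties \eqref{good4_omega}, \eqref{good4_G}, \eqref{good4bis} then come from $\pn a\to a$ in $H^k(\band)$ together with the values of $\eta$ on $\Omega$ and $G$. The price is that the terminal state is no longer compactly supported in $x$, so \eqref{good6} needs one more idea: replace the sharp window in $\pn$ by a smooth one, so that $\pn$ acts as convolution with an $L^1_x$ kernel and preserves $L^1_x(L^2_y)$ of the compactly supported pre-image.

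Your flux worry is unnecessary in this setting: for any $u_a\in\ldiv(\band)$ on the unbounded strip, the quantity $c(x)=\int_{-1}^{1}u_{a,1}(x,y)\,\dd y$ is independent of $x$ (by incompressibility and $u_{a,2}(x,\pm1)=0$), and a nonzero constant would force $\|u_{a,1}\|_{L^2(\band)}=\infty$. Hence $c\equiv 0$, and the same holds along the Navier--Stokes flow, so $\bar\psi(T_b,x,\pm1)=0$ automatically.
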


\begin{proof}
 Let $u_a \in \ldiv(\band)$. Let $T_b > 0$. Let $v$ be the weak Leray solution to
 \begin{equation}
  \partial_t v - \Delta v + \mathbb{P}[(v\cdot\nabla)v] = 0,
  \quad 
  v(0) = u_a.
 \end{equation}
 Hence, by definition, $\widetilde{u}_a = v(T_b)$. It is classical to prove that $v \in C^\infty((0,T_b]\times\band)$ (see e.g.~\cite{MR645638}).
 Let $\beta \in C^\infty([0,T_b];[0,1])$ with $\beta = 1$ on $[0,T_b/3]$ and $\beta = 0$ on $[2T_b/3,T_b]$.
 Let $\theta \in C^\infty(\band ;[0,1])$ with $\theta = 1$ for $x \in [0,L]$ and $\theta = 0$ for $x < -L$ or $x > 2L$. Let
 \begin{equation} \label{def:psi.v}
   \psi^v(t,x,y) := - \int_{-1}^y v(t,x,y) \cdot e_x \dd y.
 \end{equation}
 We introduce
 \begin{equation}
   u := \nabla^\perp \left( \beta \psi^v + (1-\beta) \theta \psi^v \right).  
 \end{equation}
 Hence $\diverg u = 0$. Moreover, since $\beta$ does not depend on the space variables, one has
 \begin{equation}
   u = \beta v + (1-\beta) \theta v + (1-\beta) \psi^v \nabla^\perp \theta.
 \end{equation}
 Then, $u$ is the weak Leray solution to
 \begin{equation}
  \partial_t u - \Delta u + \mathbb{P}[(u\cdot\nabla)u] = \mathbb{P} g,
  \quad 
  u(0) = u_a,
 \end{equation}
 where we set
 \begin{equation}
  \begin{split}
   g  := \dot{\beta} (1-\theta) v
   & - \dot \beta \psi^v \nabla^\perp \theta + (1-\beta) \partial_t \psi^v \nabla^\perp \theta
   \\ & - 2 (1-\beta) (\nabla \theta \cdot \nabla) v - (1-\beta)\Delta \theta v 
   - (1-\beta) \Delta \left( \psi^v \nabla^\perp \theta \right)
   \\
   &  - \beta (1-\beta) (v\cdot\nabla)((1-\theta)v)
   - (1-\beta)^2(\theta v \cdot\nabla)((1-\theta)v)
   \\
   & 
   + (1-\beta) (\beta + (1-\beta)\theta) (v\cdot\nabla) (\psi^v \nabla^\perp \theta)
   + (1-\beta) \psi^v (\nabla^\perp \theta \cdot \nabla) u.
  \end{split}
 \end{equation}
 Since each term involves, on the one hand $\dot \beta$ or $1-\beta$ and, on the other hand, $1-\theta$ or a derivative of $\theta$, $\supp g \subset [T_b/3,T_b]\times(\band\setminus{\bar{\Omega}})$ and $g \in C^\infty([0,T_b]\times\band)$. We define
 \begin{equation}
  \label{uM.new}
  u_N(t) := \beta(t) u(t) + (1-\beta(t)) \mathbf{P}_N u(t),
 \end{equation}
 where $\PN$ is defined in \eqref{PN}. In particular, \eqref{uM.new} implies \eqref{good1}, provided that one sets  
 \begin{equation} \label{FM}
  \begin{split}
   f_N := \dot{\beta} (u-\PN u) 
   & + (1-\beta) \Big( (\PN u \cdot \nabla) \PN u - \PN ((u\cdot\nabla)u) \Big)
   \\
   & + \beta (1-\beta) \Big(
    (u \cdot\nabla) (\PN u-u) 
    - (\PN u  \cdot\nabla) (\PN u-u) 
   \Big)
   \\ 
   & + \beta g + (1-\beta) \PN g.
  \end{split}
 \end{equation}
 Let $k\in\N$. Thanks to definition \eqref{FM}, there holds \eqref{good2} and \eqref{good3}. Indeed, $u$ belongs to $C^0([T_b/3,T_b];H^{k+1}(\Omega))$ and the family $\PN u$ converges towards $u$ in this space. From \eqref{uM.new}, at the final time, one has 
 \begin{equation} \label{umb}
  u_N(T_b) = \PN u(T_b) = 
  \PN \left(\theta v(T_b) + \psi^v(T_b) \nabla^\perp \theta\right) 
  = 
  \PN \left(\theta \widetilde{u}_a + \psi^{\widetilde{u}_a} \nabla^\perp \theta\right),
 \end{equation}
 where $\psi^{\widetilde{u}_a}$ is defined from $\widetilde{u}_a$ similarly as $\psi^v$ is from $v$ in \eqref{def:psi.v}.
 In particular, we deduce from \eqref{umb} that $u_N(T_b)$ is entire in $x$ so that, for any $\rho_b > 0$, there exists $C_b >0$ such that \eqref{good5} holds. We also deduce from \eqref{umb} that $u_N(T_b) \to \theta\widetilde{u}_a {+ \psi^{\widetilde{u}_a} \nabla^\perp \theta}$ in $H^k(\band)$, which implies \eqref{good4_omega}, \eqref{good4_G} and \eqref{good4bis} with $C_\theta = 2 \| \theta \|_{W^{1,\infty}(\band)}$.

 \bigskip
 
 To obtain \eqref{good6}, we change slightly the definition \eqref{PN} of $\PN$. Instead of a rectangular window filter, we define $\PN$ as the Fourier multiplier $W_N$, where $W_N \in C^\infty(\R;[0,1])$ is such that $W_N(\xi) = 1$ for $\xi \in [-N+1,N-1]$ and $W_N(\xi) = 0$ when $|\xi|\geq N$. This preserves the property \eqref{pnhk} but has a better behavior with respect to $L^1$ norms in space. Indeed, if $\phi \in \mathcal{S}(\R,\R)$, one checks that $W_N \phi \in L^1(\R)$. This property implies \eqref{good6} because $u(T_b)$ has a compact support.
\end{proof}

\subsection{Proof of the regularization proposition}

We turn to the proof of \cref{prop_A}. 

Let $T,\rho_b,\eta_b > 0$ and $k\in\N$. Let $\uinit \in \ldiv(\Omega)$ satisfying \rev{$\uinit\cdot\ex = 0$ on $\Gamma_0$ and $\Gamma_L$. 
Let $u_a$ be the extension by $0$ of $\uinit$ to $\band$. Since $\uinit \cdot \ex = 0$ on $\Gamma_0$ and $\Gamma_L$, $u_a \in \ldiv(\band)$.} For $T_b \in( 0,T)$ small enough, the free solution starting from $u_a$ at time $T_b$, say $\widetilde{u}_a$ satisfies $\|(\widetilde{u}_a)_{\rvert \band\setminus\bar{\Omega}}\|_{L^2(\band\setminus\bar{\Omega})} \leq \eta_b / 2 C_\theta$ and
$\|(\widetilde{u}_a)_{\rvert\Omega} - \uinit\|_{L^2(\Omega)} \leq \eta_b / 5$.

We choose  $N$ large enough such that \eqref{good4_omega}, \eqref{good4_G} and \eqref{good4bis} imply that  $u_b := u_N(T_b)$, where the family $(u_N,f_N)$ is  given by \cref{thm:secours},  satisfies \eqref{eq:ub-estimate} and such that  \eqref{good2} ensures \eqref{fa.small-rest}. Estimates \eqref{good5} and \eqref{good6} prove \eqref{ub.anal} and \eqref{ub.int}. This concludes the proof of \cref{prop_A}, provided that we define $f_g := (f_N)_{\rvert\Omega}$ and $f_c := (f_N)_{\rvert \band\setminus \Omega}$, each being smooth within its support.

%%%%%%%%%%%%%%%%%%%%%%%%%%%%%%%%%%%%%%%%%%%%%%%%%%%%%%%%%%%%
\section{Strategy for global approximate controllability}
\label{sec:strategy}

We explain our strategy to prove \cref{prop_B}. Let $T > 0$, $u_b \in \ldiv(\band)$, $\delta \in (0,\f12)$ and $k\in \N$. We intend to construct a family of approximate trajectories depending on a small parameter $0 < \varepsilon \ll 1$ and driving $u_b$ approximately to zero. We detail the construction of this family in the following paragraphs. Then, we prove estimates on boundary layer terms for these approximate trajectories in \cref{sec:dissipation}. We prove estimates on the remainder in \cref{sec:r} and postpone analytic-type estimates for these approximate trajectories to \cref{sec:sources}.

\subsection{Small-time to small-viscosity scaling}

Let $\kappa \in (0,1)$. Although it might seem like a further complication, our strategy is based on trying to control the system \eqref{eq:ns-fc-fg} at an even shorter time scale, $\varepsilon^{1-\kappa} T$, passing through intermediate states (velocities) of order $1/\varepsilon$. For $\varepsilon \in (0,1)$, we introduce the trajectories
\begin{gather}
 \label{eq:UP}
 U^\varepsilon(t,x,y) := \ue(t/\varepsilon,x,y)/\varepsilon, \quad P^\varepsilon(t,x,y) := p^\varepsilon(t/\varepsilon,x,y)/\varepsilon^2, \\ 
 \label{eq:FC-FG}
 F_g^\varepsilon(t,x,y):=f_g^\varepsilon(t/\varepsilon,x,y)/\varepsilon^2 \quad \text{and} \quad F_c^\varepsilon(t,x,y):=f_c^\varepsilon(t/\varepsilon,x,y)/\varepsilon^2. 
\end{gather}
The tuples $(U^\varepsilon,P^\varepsilon,F_c^\varepsilon,F_g^\varepsilon)$ define solutions to \eqref{eq:ns-fc-fg} with initial data $u_b$ if and only if the new unknowns $(\ue,p^\varepsilon,f_c^\varepsilon,f_g^\varepsilon)$ are solutions to the rescaled system
\begin{equation} \label{eq:nse}
 \left\{
 \begin{aligned}
   \partial_t \ue + \left( \ue \cdot \nabla \right) \ue + \nabla p^\varepsilon 
   - \varepsilon \Delta \ue & = f_g^\varepsilon +  f_c^\varepsilon 
   &&\quad \textrm{in } (0, T/\varepsilon^\kappa) \times \band, \\
   \diverg \ue & = 0
   &&\quad \textrm{in } (0, T/\varepsilon^\kappa) \times \band, \\
  \ue & = 0   &&\quad \textrm{on } (0, T/\varepsilon^\kappa) \times \partial\band, \\
   \ue \rvert_{t = 0} & = \varepsilon u_b
   &&\quad \textrm{in } \band.
 \end{aligned}
 \right.
\end{equation}
Observe the three differences between \eqref{eq:nse} and the original system \eqref{eq:ns-fc-fg}: 
\begin{itemize}
\item the Laplace term has a small factor $\varepsilon$ in front of it rather than $1$, 
\item the system is set on the long time interval $(0, T/\varepsilon^\kappa)$ rather than $(0, T)$, 
\item the initial data is $ \varepsilon u_b$ rather than $u_b$. 
\end{itemize}
We construct approximate solutions to \eqref{eq:nse} in the following paragraph.

\subsection{Return method ansatz}

We introduce the following explicit approximate solution to \eqref{eq:nse}:
\begin{align}
 \label{eq:uapp}
 \uapp(t,x,y) & := u^0(t) + \chi(y) v^0\left(t,\varphi(y)/\sqrt{\varepsilon}\right) + \varepsilon u^1(t,x,y) {+ \e^2 w^\e(t,y)}, \\
 p^\varepsilon_{\mathrm{app}}(t,x,y) & := p^0(t,x), \\
 \label{eq:fc}
 f^\varepsilon_c(t,x,y) & := \varepsilon f^1_{\rvert \band \setminus \Omega}(t,x,y), \\
 \label{eq:fg}
 f^\varepsilon_g(t,x,y) & := \varepsilon f^1_{\rvert \Omega}(t,x,y).
\end{align}
In the following lines, we define each of the terms involved in this approximate solution. We refer to \cref{sec:why} for comments on the choice of these profiles.

\paragraph{Base Euler flow profile.}

Let $n \in \N$ satisfying $n \geq 3$ and
\begin{equation} \label{n.kappa}
 n > \frac{3}{4} \left( \frac{1}{\kappa} - 1 \right).
\end{equation}
Let $h$ in $C^\infty(\R_+, \R)$ be such that 
\begin{gather}
 \label{hyp.h.support}
 \supp~h \subset (0,T/3] \cup [2T/3,T), \\
 \label{hyp.h.flow}
 \int_0^{T/3} h(t) \dt = 2L, \\
  \label{hyp.h.moments}
 \int_0^T t^k h(t) \dt = 0
 \quad \text{for} \quad 
 0 \leq k < n.
\end{gather}
We define
\begin{equation} \label{eq:u0p0}
 u^0(t) := h(t) \ex 
 \quad \text{and} \quad
 p^0(t,x) := - \dot{h}(t) x.
\end{equation}
For a function $a \in L^2(\band)$, we will denote its translation along the base flow $h$ by
\begin{equation} \label{trans}
 (\tau_h a) (t,x,y) := a\left(x - \int_0^t h(s) \dd s, y\right). 
\end{equation}

\paragraph{Boundary layer profile.}

Let $\varphi \in C^\infty([-1,1],[0,1])$ such that $\varphi(\pm 1) = 0$ and $|\varphi'(y)| = 1$ for $|y| \geq 1/4$. Let $\chi \in C^\infty([-1,1],[0,1])$ such that $\chi(y) = 1$ for $|y| \geq 2/3$, 
and $\chi(y) = 0$ for $|y| \leq 1/3$. Let $V(t,z)$ be the solution to
\begin{equation} \label{eq.V}
 \left\{
 \begin{aligned}
   \partial_t V - \partial_{zz} V & = 0 && \quad \text{in } \R_+ \times \R_+, \\
   V(t,0) & = h(t) &&  \quad \text{on } \R_+, \\
   V(0,z) & = 0 &&  \quad \text{in } \R_+.
 \end{aligned}
 \right.
\end{equation}
We define
\begin{equation}
 v^0(t,z) := - V(t,z) \ex.
\end{equation}
In the sequel, for any function $\mathcal{V}(t,z)$, depending on the fast variable, we will denote its evaluation at $z = \varphi(y) /\sqrt{\e}$ by
\begin{equation} \label{eval}
 \eval{\mathcal{V}}(t,y) := \mathcal{V}\left(t,\frac{\varphi(y)}{\sqrt{\e}}\right).
\end{equation}

\paragraph{Linearized Euler flow profile.}

Let $\beta \in C^\infty(\R_+,[0,1])$ non-increasing such that $\beta(t) = 1$ for $t \leq T/3$ and $\beta(t) = 0$ for $t \geq 2T/3$. Let $\chi_\delta \in C^\infty([-1,1],[0,1])$ such that $\chi_\delta(y)=1$ for $|y|\leq 1-2\delta$ and $\chi_\delta(y)=0$ for $|y|\geq 1-\delta$. We define the stream function associated with $u_b$, then $u^1$ and eventually the force $f^1$:
\begin{align}
 \label{psi_b}
 \psi_b(x,y) & := - \int_{-1}^y u_b(x,y') \cdot \ex \dd y', \\
 \label{eq:magic-u1}
 u^1(t,x,y) & := \beta(t) \tau_h \nabla^\perp [\chi_\delta \psi_b] + \tau_h \nabla^\perp[(1-\chi_\delta)\psi_b], \\
 \label{eq:magic-f1}
 f^1(t,x,y) & := \dot{\beta}(t) \left(\chi_\delta(y) u_b(x-2L,y) - \chi_\delta'(y) \psi_b(x-2L,y) \ex \right).
\end{align}

\paragraph{Technical profile.}
For $t \in \R_+$ and $y \in [-1,1]$, we define the source
\begin{equation} \label{eq.FW}
 f_W^\e := - \frac{\chi''}{\varphi^2} \eval{z^2 V} 
 - 2 \frac{\chi'}{\varphi^3} \varphi' \eval{z^3 \partial_z V}.
\end{equation}
Let $W^\e(t,y) : \R_+ \times [-1,1] \to \R$ be the solution to
\begin{equation} \label{eq.W}
  \left\{
 \begin{aligned}
   \partial_t W^\e - \e \partial_{yy} W^\e & = 
   f_W^\e
   && \quad \text{in } \R_+ \times [-1,1], \\
   W^\e(t,\pm 1) & = 0 &&  \quad \text{on } \R_+, \\
   W^\e(0,y) & = 0 &&  \quad \text{in } [-1,1].
 \end{aligned}
 \right.
\end{equation}
Finally, we let 
\begin{equation} \label{eq.wW}
 w^\e(t,y) := W^\e(t,y) \ex.
\end{equation}

\paragraph{Equation satisfied by the approximate trajectories.}

Then $(\uapp,p^\varepsilon_\mathrm{app})$ are solutions to
\begin{equation} \label{eq:nse-app}
 \left\{
 \begin{aligned}
   \partial_t \uapp + \left(\uapp \cdot \nabla \right) \uapp
   - \varepsilon \Delta \uapp + \nabla p^\varepsilon_\mathrm{app} & =   f_c^\varepsilon + f_g^\varepsilon
   + \varepsilon f^\varepsilon_\mathrm{app}
   &&\quad \textrm{in } (0, T/\varepsilon^\kappa) \times \band, \\
   \diverg \uapp & = 0
   &&\quad \textrm{in } (0, T/\varepsilon^\kappa) \times \band, \\
\uapp & = 0   &&\quad \textrm{on } (0, T/\varepsilon^\kappa) \times \partial\band, \\
   \uapp \rvert_{t = 0} & = \varepsilon u_b
   &&\quad \textrm{in } \band,
 \end{aligned}
 \right.
\end{equation}
where we define
\begin{equation} \label{eq:fapp}
 \begin{split}
  f^\varepsilon_\mathrm{app}
 := &
 - \varepsilon \Delta u^1 + \varepsilon (u^1 \cdot \nabla) u^1
 {+\e^2 W^\e \partial_x u^1 + \e^2 (u^1 \cdot \ey) \partial_y w^\e}
% \\ 
% & {\cancel{+ \left(\chi'' \eval{V} 
% + 2 \frac{\chi'}{\varphi} \varphi' \eval{z \partial_z V} \right) \ex}}
 \\ & - \chi \eval{V} \partial_x u^1
 - \frac{u^1 \cdot \ey}{\varphi} \left(\sqrt{\varepsilon} \chi' \eval{z V} + \chi \varphi' \eval{z \partial_z V}\right) \ex.
 \end{split}
\end{equation}

\subsection{Estimates and proof of approximate controllability}

By construction, the approximate trajectory will be small at the final time.
\begin{proposition} \label{thm:uapp-estimate}
 There exists a constant $C_{\mathrm{app}} > 0$ such that, for $\e>0$ small enough,
 \begin{equation} \label{eq:uapp-small}
  \begin{split}
   \frac{1}{\e} \| \uapp(T/\varepsilon^\kappa&)_{\rvert\Omega} \|_{L^2(\band)} 
   \\ 
   & \leq C_{\mathrm{app}} \left( \e^{\frac14} + \e^{\kappa(n-\frac{3}{4}(\frac{1}{\kappa}-1))} |\ln\e|^{n+\frac{3}{4}}\right) + \| {u_b}_{\rvert \{ |y| \geq 1-2\delta \} } \|_{L^2(\band)}.
  \end{split}
 \end{equation}
\end{proposition}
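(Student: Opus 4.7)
The plan is to estimate $\uapp(T/\e^\kappa)$ by decomposing it according to the ansatz \eqref{eq:uapp} into four pieces: the base Euler flow $u^0$, the boundary layer $\chi v^0$, the linearized Euler corrector $\e u^1$, and the technical profile $\e^2 w^\e$. Each is controlled individually at the final time $T/\e^\kappa$, and the three nontrivial contributions match one-to-one the three terms of the right-hand side of \eqref{eq:uapp-small}.

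The base flow $u^0 = h\,\ex$ vanishes at $t = T/\e^\kappa$ as soon as $\e$ is small enough that $T/\e^\kappa > T$, by the support condition \eqref{hyp.h.support}. For the linearized Euler corrector, the key observation is that at $t = T/\e^\kappa$ one has $\beta(T/\e^\kappa) = 0$ and the total displacement $\int_0^{T/\e^\kappa} h(s)\,ds = \int_0^T h(s)\,ds = 0$ by the zeroth-order moment condition in \eqref{hyp.h.moments}; hence $\tau_h$ acts as the identity at the final time and $u^1(T/\e^\kappa) = \nabla^\perp[(1-\chi_\delta)\psi_b]$, which is supported in $\{|y|\geq 1-2\delta\}$. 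Expanding $\nabla^\perp[(1-\chi_\delta)\psi_b] = (1-\chi_\delta)u_b + \chi_\delta'\psi_b\,\ex$ and exploiting the identities $u_b = \nabla^\perp\psi_b$ and $\partial_x\psi_b = u_b\cdot\ey$, together with the $L^1_x(L^2_y)$ integrability of $u_b$ and of its derivatives afforded by \eqref{ub.int}, I expect a bound $\|u^1(T/\e^\kappa)\|_{L^2(\Omega)} \leq C\|u_b|_{\{|y|\geq 1-2\delta\}}\|_{L^2(\band)}$; the $\e$ prefactor in $\e u^1$ is then cancelled by the $1/\e$ in front of the norm, producing the last term of \eqref{eq:uapp-small}.

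The core estimate is for the boundary-layer profile $\chi(y) v^0(t,\varphi(y)/\sqrt\e) = -\chi(y)\,V(t,\varphi(y)/\sqrt\e)\,\ex$. For $t\geq T$ one writes the Duhamel representation $V(t,z) = \int_0^T h(s)\,G(t-s,z)\,ds$ with $G(\tau,z) = \frac{z}{2\sqrt\pi}\tau^{-3/2}e^{-z^2/(4\tau)}$. Taylor-expanding $G(t-s,z)$ in $s$ around $s=0$ up to order $n-1$ and using the $n$ vanishing moments \eqref{hyp.h.moments} kills the polynomial part, so that $V(t,z)$ equals an integral remainder involving $\partial_\tau^n G(\tau^\star,z)$ with $\tau^\star\in[t-T,t]$. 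A direct computation gives $|\partial_\tau^n G(\tau,z)| \leq C_n\, z\,\tau^{-3/2-n}(1+(z^2/\tau)^n)e^{-z^2/(4\tau)}$, and the change of variable $w = z/\sqrt\tau$ in the Gaussian integral then yields $\|V(t,\cdot)\|_{L^2_z}\leq C_n\, t^{-3/4-n}$. Evaluating at $t=T/\e^\kappa$ produces $\|V(T/\e^\kappa,\cdot)\|_{L^2_z}\lesssim \e^{\kappa(n+3/4)}$. The change of variable $z=\varphi(y)/\sqrt\e$, valid because $|\varphi'|=1$ near the boundary, then contributes a Jacobian factor $\e^{1/4}$ when passing to the $L^2_y$ norm, so that $\|\chi v^0(T/\e^\kappa)\|_{L^2_y}\lesssim \e^{1/4+\kappa(n+3/4)}$; dividing by $\e$ reproduces the moment term $\e^{\kappa(n-\frac{3}{4}(\frac{1}{\kappa}-1))}$ in \eqref{eq:uapp-small}, while the logarithmic factor $|\ln\e|^{n+3/4}$ is recovered by a Stirling-type tracking of the $n$-dependent constants $C_n$ (together with a quasi-optimal choice of the Taylor expansion point in $[t-T,t]$).

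Finally, the technical profile $\e^2 w^\e$ is controlled via Duhamel applied to the heat equation \eqref{eq.W}: the source $f_W^\e$ is supported where $\chi'$ or $\chi''$ is nonzero, hence where $\varphi(y)$ is bounded below by a positive constant, so $V$ is evaluated at $z\sim 1/\sqrt\e$. Combining the Gaussian decay in $z^2/t$ (dominant for $t\lesssim 1/\e$) with the moment-enhanced polynomial decay (for $t\gtrsim 1/\e$), and using the Dirichlet heat semigroup contraction $\|e^{\e\tau\partial_{yy}}\|_{L^2_y\to L^2_y}\leq e^{-c\e\tau}$ on $[-1,1]$, one gets $\e^2\|W^\e(T/\e^\kappa)\|_{L^2_y}\lesssim \e^{1/4}$, which accounts for the $\e^{1/4}$ term in \eqref{eq:uapp-small}. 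The main obstacle lies in the boundary-layer step: one must simultaneously keep the $n$-dependence of the constants $C_n$ fully explicit (in order to secure the logarithmic rather than a super-polynomial factor) and correctly locate the Gaussian concentration scale $\sqrt{T/\e^\kappa} = \sqrt{T}\,\e^{-\kappa/2}$ of $V(T/\e^\kappa,\cdot)$ inside the boundary-layer window $z\in[0,O(1/\sqrt\e)]$, so that passing from $L^2_z$ to the truncated $L^2_y$ norm loses nothing of substance; the assumption \eqref{n.kappa} is precisely what makes the two bounds balance.
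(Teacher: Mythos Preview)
Your decomposition and the handling of $u^0$ and $u^1$ match the paper's proof essentially verbatim, including the crucial observation that the zero horizontal mean of $u_b$ (a consequence of $\diverg u_b=0$ together with the $L^1_x$ integrability from \eqref{ub.int}) lets one rewrite $\psi_b$ near $y=+1$ as $\int_y^1 u_b\cdot\ex\,dy'$, which is what localizes the $\chi_\delta'\psi_b$ term to the strip $\{|y|\geq 1-2\delta\}$.

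For the boundary layer and the technical profile you take a genuinely different route. The paper bounds $\|V(t,\cdot)\|_{L^2_z}$ through \cref{Lemma:V}, whose proof passes to an auxiliary odd extension on $\R$, computes in Fourier, and then invokes a decay lemma from \cite{2016arXiv161208087C}; this is where the logarithmic loss $(\ln t/t)^{n+3/4}$ originates. Your direct Duhamel/Taylor argument with the Abel kernel $G(\tau,z)=\frac{z}{2\sqrt\pi}\tau^{-3/2}e^{-z^2/(4\tau)}$ gives $\|V(t,\cdot)\|_{L^2_z}\leq C_n t^{-3/4-n}$ with \emph{no} logarithm, which is strictly stronger. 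So your attempt to ``recover'' the factor $|\ln\e|^{n+3/4}$ via Stirling is misdirected: you do not need it, and it is not a matter of tracking the $n$-dependence of $C_n$ (here $n$ is fixed once and for all by \eqref{n.kappa}). Likewise, assumption \eqref{n.kappa} is not about balancing two competing bounds; it simply guarantees that the exponent $\kappa(n-\tfrac34(\tfrac1\kappa-1))$ is positive, so that the boundary-layer contribution vanishes as $\e\to0$.

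For $w^\e$ there is a bookkeeping slip: you write $\e^2\|W^\e(T/\e^\kappa)\|_{L^2_y}\lesssim\e^{1/4}$ and claim this produces the $\e^{1/4}$ term, but the proposition carries a prefactor $1/\e$, so what you actually need is $\e\|W^\e\|\lesssim\e^{1/4}$, i.e.\ $\|W^\e\|\lesssim\e^{-3/4}$. The paper obtains this via the $L^\infty$ energy estimate of \cref{lemma.W}; your Duhamel approach in fact gives the better bound $\|W^\e\|_{L^2_y}\lesssim\e^{1/4}$ directly, since $\|f_W^\e(s)\|_{L^2_y}\lesssim\e^{1/4}\|V(s)\|_{H^{1,3}}$ (this is \cref{lemma.scale}) and the latter is integrable in $s$ by \cref{Lemma:V}. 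Your pointwise analysis of $V(s,1/\sqrt\e)$ splitting into Gaussian-tail and moment-decay regimes is unnecessary once you use this scaling trick.
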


Moreover, the approximate trajectory can be arbitrarily close to a true trajectory. Indeed, we can construct a remainder which is small, provided that the initial data $u_b$ is sufficiently regular (its tangential analytic radius is large enough).

\begin{proposition} \label{thm:r-estimate}
 There exists $\rho_b > 0$, depending only on $T$ such that, if $u_b$ satisfies \eqref{ub.anal} and \eqref{ub.int} for some $C_b > 0$, there exists $C_r > 0$ such that, for $\e > 0$ small enough, there exists a weak Leray solution $r^\e \in C^0([0,T/\e^\kappa],\ldiv(\band))\cap L^2((0,T/\e^\kappa),H^1(\band))$ to
 \begin{equation} \label{eq.nse.r}
 \left\{
 \begin{aligned}
   \partial_t r^\varepsilon + \left( \uapp \cdot \nabla \right) r^\varepsilon 
   +  \e \left( r^\e \cdot \nabla \right) r^\varepsilon
   +   \left( r^\varepsilon \cdot \nabla \right)  \uapp &
   \\
   - \varepsilon \Delta r^\varepsilon  + \nabla \pi^\varepsilon  & = - f^\varepsilon_{\mathrm{app}}
   &&\quad \textrm{in } (0, T/ \varepsilon^\kappa) \times \band, \\
   \diverg r^\varepsilon & = 0
   &&\quad \textrm{in } (0, T/\varepsilon^\kappa) \times \band, \\
   r^\varepsilon & = 0 
   &&\quad \textrm{on } (0, T/ \varepsilon^\kappa) \times \partial\band, \\
   r^\varepsilon \rvert_{t = 0} & = 0
   &&\quad \textrm{in } \band,
 \end{aligned}
 \right.
 \end{equation}
 which moreover satisfies
 \begin{equation} \label{eq:r-small}
  \| r^\varepsilon \|_{L^\infty((0,T/\e^\kappa);L^2(\band))} \leq C_r (\e^{\frac14} + \e^{1-\kappa}).
 \end{equation}
\end{proposition}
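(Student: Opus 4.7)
The plan is to derive the estimate~\eqref{eq:r-small} by a tangentially analytic energy method, in the spirit of the long-time nonlinear Cauchy--Kovalevskaya estimates mentioned in the abstract. The existence of a weak Leray solution $r^\e$ to~\eqref{eq.nse.r} is classical once $\uapp$ is smooth and divergence free, through a standard Galerkin scheme in the band; the real difficulty is the quantitative bound on the long time interval $[0,T/\e^\kappa]$. A naive $L^2$-energy estimate fails because $\partial_y\uapp$ contains the boundary-layer contribution $\partial_y[\chi\eval{v^0}]$, of size $O(1/\sqrt\e)$ in $L^\infty$ through the fast variable $z=\varphi(y)/\sqrt\e$, whereas the viscous dissipation only provides $\e\|\partial_y r^\e\|_{L^2}^2$: the scales do not match.

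To remedy this, I would introduce a time-dependent tangential analyticity radius $\rho(t) := \rho_b - \lambda\int_0^t \Phi(s)\dd s$, where $\Phi(s)$ is an $L^1_t$ weight built from $\|V(s,\cdot)\|_{L^\infty_z}$ and its relevant derivatives, and work with the Fourier-analytic energy
\begin{equation*}
 E(t) := \int_{-1}^{1}\int_{\R} e^{2\rho(t)|\xi|}\,|\mathcal{F}r^\e(t,\xi,y)|^2\,\dd\xi\,\dd y.
\end{equation*}
Differentiating $E$ in time produces, besides the usual viscous dissipation $\e\|e^{\rho|\partial_x|}\nabla r^\e\|_{L^2}^2$, an extra analytic dissipation $(-\dot\rho)\,\||\partial_x|^{1/2}e^{\rho|\partial_x|} r^\e\|_{L^2}^2$ of strength proportional to $\lambda\Phi(t)$. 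The problematic linear term $(r_2^\e\,\partial_y\uapp,r^\e)$, taken in this weighted norm, is then handled by the divergence-free relation $r_2^\e(x,y) = -\int_{-1}^y\partial_x r_1^\e(x,y')\dd y'$, which trades the dangerous $1/\sqrt\e$ vertical derivative for one tangential derivative $\partial_x$. A careful Fubini/Cauchy--Schwarz argument in $y$ against $\partial_y[\chi\eval{v^0}]$ then bounds this contribution by $C\|V(t,\cdot)\|_{L^\infty_z}\||\partial_x|^{1/2}e^{\rho|\partial_x|} r^\e\|_{L^2}^2$, which is absorbed by the analytic dissipation as soon as $\lambda$ is chosen universal.

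The source $f^\e_\mathrm{app}$ defined in~\eqref{eq:fapp} splits into three groups: the terms carrying explicit $\e^\alpha$ prefactors with $\alpha \ge 1$ give an $O(\e^{1/4})$ contribution once integrated against $r^\e$ over $[0,T/\e^\kappa]$ and combined with the boundary-layer area $\sqrt{\e}$; the transport-like source $-\chi\eval{V}\partial_x u^1$ together with the $\chi\varphi'\eval{z\partial_z V}$ term yield an $O(\e^{1-\kappa})$ contribution, using \eqref{ub.anal}--\eqref{ub.int} for the tangential analytic regularity of $u^1$ propagated by the translation~$\tau_h$; and the quadratic term $\e(r^\e\cdot\nabla)r^\e$ is controlled by a standard analytic bilinear/commutator estimate, where the $\e$ prefactor together with the analytic dissipation absorb the nonlinearity through a continuity/bootstrap argument on the size of $E(t)$. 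The main obstacle is that the time horizon $T/\e^\kappa$ blows up as $\e\to 0$, so the total radius budget $\lambda\int_0^{T/\e^\kappa}\Phi(s)\dd s$ must remain bounded uniformly in~$\e$ in order that $\rho(t)>0$ for all $t\le T/\e^\kappa$ (and thus $\rho_b$ depends only on~$T$). This is where the moment conditions~\eqref{hyp.h.moments} together with the threshold~\eqref{n.kappa} play the decisive role: they force $V(t,\cdot)$ to decay fast enough polynomially in $t$ (the well-prepared dissipation) to ensure $\Phi\in L^1((0,T/\e^\kappa))$ uniformly in~$\e$. Making this decay quantitative inside a Fourier-analytic Gr\"onwall framework is the main technical difficulty.
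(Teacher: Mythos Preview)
Your strategy is essentially the paper's: trade the singular $1/\sqrt\e$ in $(r^\e\cdot\nabla)\uapp$ for one tangential derivative via $r_2^\e=-\int_{-1}^y\partial_x r_1^\e$ (this is exactly the operator $M$ of \eqref{eq:def-M}--\eqref{recast1}), then run an analytic energy with a decaying radius $\rho(t)$ whose loss supplies the extra half-derivative of dissipation needed to absorb the resulting $\|z\partial_z V\|_{L^\infty_z}$-weighted $|\partial_x|^{1/2}$ term. The uniform-in-$\e$ finiteness of the radius budget is precisely what fixes $\rho_b$ and is indeed guaranteed by the moment conditions through \cref{Lemma:V}.

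Two refinements the paper makes that you should anticipate. First, it works in a Littlewood--Paley/$\ell^1$-Besov framework (\cref{LP}) rather than the plain $L^2$ Fourier energy $E(t)$ you propose; the $\ell^1$ summation buys the $L^\infty$ embeddings of \cref{lem:embedding} used in the bilinear and trilinear estimates, which a bare $L^2$ analytic norm does not directly provide. Second, besides $\rho(t)$ the paper carries an additional exponential weight $e^{-\beta(t)}$ (see \eqref{lstar}, \eqref{niouvariable}): the linear amplification terms $\e(r^\e\cdot\nabla)u^1+\e(u^1\cdot\nabla)r^\e$ and $r_2^\e(\chi'\eval{v^0}+\e^2\partial_y w^\e)$ cost no $x$-derivative and hence cannot be absorbed by the analytic dissipation; they are instead absorbed by $\dot\beta$, and \cref{thm:beta} checks that $\beta$ stays bounded on $[0,T/\e^\kappa]$. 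Relatedly, the paper's $\dot\rho$ is nonlinear in the solution (see \eqref{RHO}) so that the trilinear term of \cref{lem:trilinear} is absorbed directly rather than by a bootstrap; your continuity argument is morally equivalent.

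One bookkeeping slip: you have the $\e^{1/4}$ and $\e^{1-\kappa}$ contributions from $f^\e_{\mathrm{app}}$ reversed. In the proof of \cref{thm:cf} the terms with explicit $\e$-prefactors ($\e\Delta u^1$, $\e(u^1\cdot\nabla)u^1$, $\e^2 W^\e\partial_x u^1$, \ldots) are the ones yielding $\e^{1-\kappa}$, via $\alpha_\e\ge\e^\kappa$ over the long interval (estimate \eqref{type1}); it is the boundary-layer-weighted terms $\chi\eval{V}\partial_x u^1$ and $(u^1\cdot e_y)\varphi^{-1}\chi\varphi'\eval{z\partial_z V}$ that produce $\e^{1/4}$, through the fast-variable scaling \eqref{eq.lemma.scale} (estimate \eqref{type2}).
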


It is straightforward to check that \cref{thm:uapp-estimate} and \cref{thm:r-estimate} imply \cref{prop_B}. Indeed, let $\rho_b$ be given by \cref{thm:r-estimate} and assume that $u_b$ satisfies \eqref{ub.anal} and \eqref{ub.int} for some $C_b > 0$. We choose $\varepsilon > 0$ small enough such that the conclusions of both propositions hold. We construct an exact trajectory by setting
\begin{equation} \label{app.exact}
 \ue := \uapp + \e r^\e
 \quad \text{and} \quad
 p^\e := p^\e_{\mathrm{app}} + \e \pi^\e.
\end{equation}
Combining \eqref{app.exact} with the equation~\eqref{eq:nse-app} satisfied by $\uapp$ and the equation \eqref{eq.nse.r} satisfied by $r^\e$ proves that $\ue$ is a weak solution to \eqref{eq:nse}.

Let $\sigma > 0$. Choosing $\e > 0$ small enough, summing estimates \eqref{eq:uapp-small} and \eqref{eq:r-small} and recalling the definition \eqref{app.exact} of $u^\e$, the assumption \eqref{n.kappa} on $n$ and the scaling \eqref{eq:UP} proves that \eqref{eq:uc-estimate} holds at the time $T_c := \varepsilon^{1-\kappa}T < T$.

Since $u_b$ satisfies \eqref{ub.anal}, $u_b \in C^\infty(\band)$. Thus, thanks to \eqref{eq:FC-FG}, \eqref{eq:fc}, \eqref{eq:fg} and \eqref{eq:magic-f1}, $F_g \in C^\infty([0,T_c]\times\bar{\Omega})$, $F_c \in C^\infty([0,T_c]\times\band\setminus\bar{\Omega})$ and moreover
\begin{gather}
 \supp F_g \subset (0,T_c) \times [0,L]\times[-1+\delta,1-\delta],
 \\
 \supp F_c \subset (0,T_c) \times \band\setminus\bar{\Omega}, 
\end{gather}
Moreover, using \eqref{eq:FC-FG}, \eqref{eq:fg} and \eqref{eq:magic-f1}, one has
\begin{equation}
 \begin{split}
  \| F^\varepsilon_g(t) \|_{L^1([0,T_c];H^k(\Omega))} 
  & = \frac{1}{\varepsilon^2} \| f^\varepsilon_g(t/\varepsilon) \|_{L^1([0,\e^{1-\kappa}T];H^k(\Omega))} \\
  & = \frac{1}{\varepsilon} \| f^\varepsilon_g(t) \|_{L^1([0,T/\varepsilon^\kappa];H^k(\Omega))}
  \\
  & = \| f^1_{\rvert\Omega} \|_{L^1([0,T/\e^\kappa];H^k(\Omega))}
  \\
  & \rev{\leq} \| \chi_\delta u_b + \chi_\delta' \psi_b \|_{H^k(G)},
 \end{split}
\end{equation}
where we recall that the set $G$ is defined in \eqref{def-G}. This proves the estimate \eqref{f.small-rest} concerning the size of the phantom force, for a constant $C_{k,\delta}$ which only depends on the norm of $\chi_\delta$ in $H^{k+1}(-1,1)$, and thus concludes the proof of the approximate controllability result \cref{prop_B}.

\bigskip

We prove \cref{thm:uapp-estimate} in \cref{sec:dissipation} (thanks to the well-prepared dissipation method) and \cref{thm:r-estimate} in \cref{sec:r} (using a long-time nonlinear Cauchy-Kovalevskaya estimate).

\subsection{Comments and insights on the proposed expansion}
\label{sec:why}

\begin{remark}[Return method and base Euler flow]
 Since system \eqref{eq:nse} can be seen as a perturbation of the Euler equations a natural idea is to follow the return method introduced by Coron in~\cite{MR1164379} (see also~\cite[Chapter 6]{MR2302744}) to prove the controllability of the Euler equations in the 2D case (see also~\cite{MR1380673}, and \cite{MR1745685} for the 3D case). Loosely speaking the idea is to overcome that the linearized problem around zero is not controllable by introducing, thanks to the boundary control, a velocity $u^0$ of order $O(1)$  (whereas the initial velocity is only of order $O(\varepsilon)$) solution to the Euler equation satisfying  $u^0  \rvert_{t = 0} = u^0 \rvert_{t = T} = 0$ and such that the corresponding flow flushes all the domain out during the time interval $(0,T)$. In the present case of a rectangle this step is pretty easy and explicit: it corresponds to the introduction of a flow which flushes out the initial data. From \eqref{eq:u0p0}, we get that $(u^0,p^0)$ indeed solves the incompressible Euler equation:
\begin{equation} \label{eq:euler-u0}
 \left\{
 \begin{aligned}
   \partial_t u^0 + \left( u^0 \cdot \nabla \right) u^0 
   & = - \nabla p^0,
   &&\quad \textrm{in } \R_+ \times \band, \\
   \diverg u^0 & = 0
   &&\quad \textrm{in } \R_+ \times \band, \\
   u^0 \cdot \ey & = 0 
   &&\quad \textrm{on } \R_+ \times \partial\band,
 \end{aligned}
 \right.
\end{equation}
 with initial data $u^0(0) = 0$ and $u^0(t) = 0$ for $t \geq T$.
\end{remark}

\begin{remark}[Transport of the initial data]
 The term~$u^1$ takes into account the initial data~$u_b$, which is transported by the flow~$u^0$. Using \eqref{eq:u0p0}, \eqref{eq:magic-u1} and \eqref{eq:magic-f1}, we obtain that $u^1$ solves
 \begin{equation} \label{eq.u1}
 \left\{
 \begin{aligned}
   \partial_t u^1 + h(t) \partial_x u^1 & = f^1 
   && \quad \textrm{in } \R_+ \times \band , \\
   \diverg u^1 & = 0 
   && \quad \textrm{in } \R_+ \times \band , \\
   u^1  & = 0 
   && \quad \textrm{on } \R_+ \times \partial\band , \\
   u^1(0)  & = u_b 
   && \quad \textrm{in } \band. 
 \end{aligned}
 \right.
 \end{equation}
 Thanks to assumption~\eqref{hyp.h.flow}, it is clear that the initial data will be flushed outside of the domain at time $T/3$. During the time interval $[T/3,2T/3]$, the initial data $u_b$ has been shifted towards the right of a distance $2L$. This is the time interval during which the force $f^1$ kills most of the initial data (for $|y|\leq 1-\delta$). 
 
 The key point is that, outside of the physical domain, this force is merely a control. However, since we need this force to be analytic, it also acts a little bit within the physical domain. This gives rise to an unwanted phantom force.
\end{remark}

\begin{remark}[Boundary layer correction]
\label{remark-blc}
 A major difficulty is linked to the discrepancy between the Euler and the Navier-Stokes equations in the vanishing viscosity limit. Indeed, although inertial forces prevail inside the domain, viscous forces play a crucial role near the uncontrolled boundary, and give rise to a boundary layer of order $O(1)$ associated with the velocity $u^0$ which does not satisfy the tangential part of the Dirichlet condition on the top and bottom boundaries. 
 
 The purpose of the second term~$v^0$ is to recover the Dirichlet boundary condition by introducing the boundary layer generated by~$u^0$. Thanks to our previous choice of $u^0$ we will avoid the difficulty usually associated with the Prandtl equation. Indeed the boundary layer will also be fully horizontal (tangential) and will not depend on $x$ so that the equation for $v^0$ will deplete into a linear heat equation with non-homogeneous Dirichlet data depending on $u^0$. The quantity $\varphi(y) / \sqrt{\varepsilon}$ reflects quick variations within the boundary layer, where $\varphi(y)$ is the distance to the boundary.
\end{remark}

% ============================================================================

\section{Well-prepared dissipation method for the boundary layer}
\label{sec:dissipation}

The key argument of the well-prepared dissipation method is that the normal
dissipation involved in fluid mechanics boundary layer equations can dissipate
most of their energy, provided that the created boundary layers are 
``well-prepared'' in some sense. Roughly speaking, this preparation amounts
to ensure that they do not contain energy at low frequencies.

\subsection{Large time decay of the boundary layer profile}

In the work~\cite{2016arXiv161208087C} concerning the case of the Navier slip-with-friction
boundary condition, we used boundary controls to import enough vanishing moments
thanks to the transport by the Euler flow within the boundary layer.
In this work, we cannot use this strategy because we do not want the boundary
layer profile to depend on the slow tangential variable, see Remark \ref{remark-blc}.
Instead we rely on the assumptions  \eqref{hyp.h.moments} 
on the base Euler flow. 
We prove below that these conditions entail  a good decay for the boundary layer profile. 
This decay will be used both to prove
that the source terms generated by~$v$ in equation~\eqref{eq.nse.r} for the 
remainder are integrable with respect to time and that the boundary profile at
the final time is small enough to apply a local controllability result.
For $s, m \in \N$ and $I$ an interval of~$\R$, 
we introduce the following weighted Sobolev spaces:
\begin{equation} \label{def.Hsm}
 H^{s,m}(I) := \left\{ f \in H^{s}(I), \enskip
 \sum_{\alpha = 0}^{s} \int_I \left(1+z^2\right)^m \left|f^{(\alpha)}(z)\right|^2 \dz
 < + \infty \right\},
\end{equation}
which we endow with their natural norm. We will use this definition with $I = \R$
or $I = \R_+$.

\begin{lemma} \label{Lemma:V}
 Let $T > 0$, $s,n \in \N$ and $h \in C^\infty(\R,\R)$ 
 satisfying \eqref{hyp.h.support} and~\eqref{hyp.h.moments}.
 We consider~$V$ the solution to~\eqref{eq.V}. For any $0 \leq m \leq 2 n+1$,
 there exists a constant $C$ such that the following estimate holds:
 \begin{equation} \label{ineq.V.decay}
  \left| V(t, \cdot) \right|_{H^{s,m}(\R_+)}
  \leq C \left| \frac{\ln (2+t)}{2+t} \right|^{\f14 + \frac{2n+1}{2} - \frac{m}{2}}.
 \end{equation}
\end{lemma}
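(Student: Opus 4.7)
The plan is to exploit the explicit Duhamel representation of $V$ together with the vanishing moments~\eqref{hyp.h.moments} of $h$ as a Taylor-cancellation device that enforces the claimed decay. Writing
$$V(t,z) = \int_0^t K(t-s,z)\,h(s)\,\ds,$$
where $K$ is the Poisson kernel of the Dirichlet heat equation on $\R_+$, explicitly $K(\tau,z) = \frac{z}{2\sqrt{\pi}\,\tau^{3/2}}\,e^{-z^2/(4\tau)} = -2\,\partial_z G(\tau,z)$ with $G(\tau,z) = (4\pi\tau)^{-1/2}\,e^{-z^2/(4\tau)}$ the free heat kernel. For $0\le t\le 2T$, parabolic regularity and the support condition $\supp h\subset[0,T]$ give a uniform $H^{s,m}(\R_+)$ bound on $V$; since the right-hand side of~\eqref{ineq.V.decay} is bounded away from zero on this range, the estimate is trivial there. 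I focus on the long-time regime $t\ge 2T$.

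In the long-time regime, I Taylor-expand $K(t-s,z)$ in $s$ around $s=0$ up to order $n-1$. By~\eqref{hyp.h.moments} the polynomial part integrates to zero against $h$, and only the integral remainder survives:
$$V(t,z) = \int_0^T\!\frac{(-s)^n}{(n-1)!}\,h(s)\int_0^1(1-\theta)^{n-1}\,\partial_\tau^{n}K(t-\theta s,z)\,\dd\theta\,\ds.$$
Combining $\partial_\tau G=\partial_z^2 G$ iteratively with $K=-2\partial_z G$ yields $\partial_\tau^n K = -2\,\partial_z^{2n+1}G$, and after differentiating $\alpha$ times in $z$ I get an analogous representation for $\partial_z^\alpha V$ in terms of $\partial_z^{2n+1+\alpha}G$. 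Because $t-\theta s\in[t-T,t]$ is comparable to $t$ on this range, the problem reduces to weighted-$L^2$ estimates on derivatives of $G$.

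Next I invoke the self-similar form $\partial_z^j G(\tau,z)=\tau^{-(j+1)/2}\,\mathcal{P}_j\!\bigl(z/\sqrt\tau\bigr)\,e^{-z^2/(4\tau)}$, where $\mathcal{P}_j$ is a polynomial of degree $j$. This yields the pointwise bound $|\partial_z^\alpha V(t,z)|\le C\,t^{-n-1-\alpha/2}\,\mathcal{P}_{2n+1+\alpha}(z/\sqrt t)\,e^{-z^2/(Ct)}$. Plugging into the norm and performing the rescaling $z=\sqrt t\,\zeta$ gives
$$\int_0^\infty(1+z^2)^m|\partial_z^\alpha V|^2\,\dz\;\le\;C\,t^{-2n-\frac32-\alpha}\int_0^\infty(1+t\zeta^2)^m\,\mathcal{P}_{2n+1+\alpha}(\zeta)^2\,e^{-\zeta^2/C}\,\dd\zeta.$$
Bounding $(1+t\zeta^2)^m\le 2^m(1+t^m\zeta^{2m})$ and using that the remaining Gaussian integral against the polynomial is finite, the right-hand side is $\le C\,t^{-2n-3/2-\alpha+m}$. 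Taking square roots and summing over $\alpha\le s$ (the worst term being $\alpha=0$) gives $\|V(t,\cdot)\|_{H^{s,m}(\R_+)}\le C\,t^{-(n+\frac34-\frac m2)}$, which is exactly the exponent $\frac14+\frac{2n+1}{2}-\frac m2$ appearing in~\eqref{ineq.V.decay}; the logarithmic factor in the stated bound is slack, convenient for unifying the bounded-$t$ and large-$t$ regimes.

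The main obstacle is tracking the weight $(1+z^2)^m$ through the rescaling: the polynomial $\mathcal{P}_{2n+1+\alpha}$ of degree $2n+1+\alpha$ in the variable $z/\sqrt t$ interacts with the weight and costs exactly $t^m$ after rescaling, so the sharp exponent is attained only because $m\le 2n+1$. This range is sharp in the argument: above it, one would need $V(t,\cdot)$ to have infinitely many vanishing moments in $z$, which cannot hold for a generic profile $h$. The other subtlety, namely that $\partial_z^\alpha V$ only improves the decay exponent by $\alpha/2$ so that the $H^{s,m}$ norm is governed by the $\alpha=0$ contribution, is transparent once the self-similar scaling is in place.
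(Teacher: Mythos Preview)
Your argument is correct and takes a genuinely different route from the paper. The paper extends $V$ to the whole line via an odd auxiliary function $f$ solving a heat equation with source $(h-\dot h)\,\mathrm{sgn}(z)e^{-|z|}$, passes to Fourier in $z$, and shows that the moment conditions~\eqref{hyp.h.moments} force $\partial_\zeta^j\hat f_T(0)=0$ for $0\le j\le 2n$; it then invokes an external decay lemma (\cite[Lemma~6]{2016arXiv161208087C}) for heat data with vanishing low-frequency derivatives, which is where the logarithmic factor enters. Your proof stays entirely in physical space: you use the Poisson representation with kernel $K=-2\partial_z G$, convert the moment hypotheses into a Taylor-remainder of order $n$ in the $s$-variable, rewrite $\partial_\tau^n K=-2\partial_z^{2n+1}G$ via $\partial_\tau G=\partial_z^2 G$, and then exploit the self-similar scaling of the Gaussian to read off the weighted $L^2$ decay directly. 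This is more self-contained (no appeal to an outside lemma), and in fact yields the pure power $t^{-(\frac14+\frac{2n+1}{2}-\frac m2)}$, so your observation that the logarithm in~\eqref{ineq.V.decay} is slack is justified. The paper's Fourier route, on the other hand, makes the connection between time-moments of $h$ and $z$-moments of the profile $f_T$ explicit and plugs into a reusable general lemma; the log is the price of that generality.
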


\begin{proof}
 Estimate~\eqref{ineq.V.decay} is straightforward up to time $T$ because its
 right-hand side is bounded from below for $t \in [0,T]$. Thus, we focus on
 large time estimates. We start by explicit computations in the frequency domain using Fourier transform. We consider the auxiliary system
 \begin{equation} \label{system.V.f}
 \left\{
 \begin{aligned}
   \partial_t f - \partial_{zz} f 
   & = (h(t) - \dot{h}(t)) \cdot \textrm{sgn}(z) e^{-|z|}, 
   && t \geq 0, \quad z \in \R, \\
   f(0,z) 
   & = 0, 
   && t = 0, \quad z \in \R. \\
 \end{aligned}
 \right.
 \end{equation}
 Since the source term in~\eqref{system.V.f} is odd, its unique solution $f$ satisfies $f(t,0) = 0$ for all $t \in \R_+$. Hence, thanks to the uniqueness property for the heat equation on the half-line, there holds $V(t,z) = f(t,z) + h(t) e^{-z}$ for $t,z \geq 0$ because both sides of this equality solve the same heat equation. Therefore, proving estimates on $f$ will provide estimates on $V$. After Fourier transform and solving the ODE, we obtain the formula:
 \begin{equation} \label{eq.V.f.hat}
  \hat{f}(t,\zeta) := \int_\R f(t,z) e^{-i \zeta z} \dz
  = -\frac{2i\zeta}{1+\zeta^2} \int_0^t e^{-(t-s)\zeta^2} \left(h(s)-\dot{h}(s)\right)\ds.
 \end{equation}
 Since $h$ vanishes after $T$ (see~\eqref{hyp.h.support}),
 the behavior of $f$ (and thus $V$) after time $T$ is entirely determined by
 the ``initial'' data $f_T(z) := f(T,z)$. Thanks to~\cite[Lemma 6]{2016arXiv161208087C},
 to establish~\eqref{ineq.V.decay}, it suffices to check that, for $0 \leq j \leq 2n$,
 \begin{equation} \label{moments.ft}
  \partial_\zeta^j \hat{f}_T(0) = 0.
 \end{equation}
 Thanks to~\eqref{eq.V.f.hat} and to the Leibniz rule, for $j \in \N$, one has:
 \begin{equation} \label{eq.dj.hatf}
  \partial_\zeta^j \hat{f}_T(\zeta)
  = - i \sum_{k=0}^j \binom{j}{k} \partial_\zeta^{j-k} \left\{ \frac{2\zeta}{1+\zeta^2} \right\}
  \int_0^T \left(h(t)-\dot{h}(t)\right) \partial_\zeta^k \left\{ e^{-(T-t)\zeta^2}\right\}
   \dt.
 \end{equation}
 First, since $\zeta \mapsto 2\zeta/(1+\zeta^2)$ is an odd function, only its 
 odd derivatives don't vanish at zero. Second, thanks to the Arbogast rule
 for the iterated differentiation of composite functions (also 
 known as Fa\`a di Bruno's formula), one has:
 \begin{equation} \label{eq.dk.dexpttzeta}
  \partial_\zeta^k \left\{ e^{-(T-t)\zeta^2}\right\}
  = \sum_{m_1 + 2 m_2 = k} \frac{k!}{m_1! m_2!} 
  \left( \frac{-2\zeta(T-t)}{1!} \right)^{m_1}
  \left( \frac{-2(T-t)}{2!} \right)^{m_2}
  e^{-(T-t)\zeta^2}.
 \end{equation}
 Hence, this derivative is non null at zero only if $k$ is even, say $k = 2k'$
 and the only non-vanishing term in the right-hand side of~\eqref{eq.dk.dexpttzeta}
 is the one corresponding to $(m_1,m_2) = (0,k')$ and is proportional to
 $(T-t)^{k'}$. From~\eqref{eq.dj.hatf} and~\eqref{eq.dk.dexpttzeta} we deduce
 that $\partial_\zeta^j \hat{f}_T(0)$ is a linear combination of the moments
 \begin{equation} \label{hh.moment.k'}
  \int_0^T \left(h(t)-\dot{h}(t)\right) (T-t)^{k'} \dt,
 \end{equation}
 where $0 \leq 2 k' \leq j - 1$. Thanks to~\eqref{hyp.h.support}
 and~\eqref{hyp.h.moments}, the integrals~\eqref{hh.moment.k'} vanish
 for $0 \leq k' < n$. So~\eqref{moments.ft} holds for $j \leq 2n-1$.
 Last,~\eqref{moments.ft} also holds for $j = 2n$ because, when $j$ is even,
 all the terms vanish. Indeed, in~\eqref{eq.dj.hatf}, either $k$ is odd
 or $j-k=2n-k$ is even. This concludes the proof of the lemma.
\end{proof}

\subsection{Fast variable scaling and Lebesgue norms}

Let us prove the following lemma, which is a simpler version 
of~\cite[Lemma 3, page 150]{MR2754340}.

\begin{lemma} \label{lemma.scale}
 Let $\gamma \in C^0([-1,1])$ with $\gamma \equiv 0$ on $\left(-\f13,\f13\right)$. For  $\mathcal{V} \in L^2(\R_+)$ and $\varepsilon > 0$:
\begin{equation} \label{eq.lemma.scale}
 \| \gamma \eval{\mathcal{V}} \|_{L^2(-1,1)} 
 \leq 
 2 \varepsilon^{\frac14} 
 \| \gamma \|_\infty \| \mathcal{V} \|_{L^2(\R_+)}.
\end{equation}
\end{lemma}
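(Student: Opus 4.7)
The plan is straightforward: use the support condition $\supp \gamma \subset [-1,-1/3]\cup[1/3,1]$, which confines the integral to the region where $|\varphi'(y)|=1$, and then perform the change of variable $z = \varphi(y)/\sqrt{\varepsilon}$. The Jacobian $dy = \sqrt{\varepsilon}\,dz$ is exactly what produces the $\sqrt{\varepsilon}$ factor at the squared level, hence the $\varepsilon^{1/4}$ after taking a square root.

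More concretely, I would start by splitting
\begin{equation*}
 \|\gamma\eval{\mathcal V}\|_{L^2(-1,1)}^2 = \int_{-1}^{-1/3}|\gamma(y)|^2\left|\mathcal V\!\left(\frac{\varphi(y)}{\sqrt{\varepsilon}}\right)\right|^2 dy + \int_{1/3}^{1}|\gamma(y)|^2\left|\mathcal V\!\left(\frac{\varphi(y)}{\sqrt{\varepsilon}}\right)\right|^2 dy.
\end{equation*}
On each of the two intervals $[-1,-1/3]$ and $[1/3,1]$, the function $\varphi$ takes values in $[0,1]$, vanishes only at the endpoint $\pm 1$, and satisfies $|\varphi'|=1$, so it is a $C^1$ diffeomorphism onto a subinterval of $[0,1]$. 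Changing variables $z=\varphi(y)/\sqrt{\varepsilon}$ in each piece gives $dy=\sqrt{\varepsilon}\,dz$, with $z$ ranging over a subset of $\R_+$. Bounding $|\gamma|$ pointwise by $\|\gamma\|_\infty$ and extending the $z$-integration to the full half-line shows that each of the two contributions is at most $\sqrt{\varepsilon}\,\|\gamma\|_\infty^2\,\|\mathcal V\|_{L^2(\R_+)}^2$.

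Summing the two contributions and taking a square root yields $\|\gamma\eval{\mathcal V}\|_{L^2(-1,1)} \le \sqrt{2}\,\varepsilon^{1/4}\|\gamma\|_\infty\|\mathcal V\|_{L^2(\R_+)}$, which is stronger than \eqref{eq.lemma.scale} since $\sqrt{2}\le 2$. There is no serious obstacle in this argument; the one mild subtlety is that $\varphi$ need not be monotonic on the whole interval $[-1,1]$, but the support condition on $\gamma$ sidesteps this by reducing the problem to the two one-sided intervals on which $|\varphi'|=1$ forces monotonicity.
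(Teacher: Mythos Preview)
Your proof is correct and follows essentially the same route as the paper: split the integral over $[-1,-1/3]$ and $[1/3,1]$, use that $|\varphi'|=1$ there to perform the change of variable $z=\varphi(y)/\sqrt{\varepsilon}$, bound $|\gamma|$ by $\|\gamma\|_\infty$, and obtain the constant $\sqrt{2}$, which the paper then relaxes to $2$.
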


\begin{proof}
 For $-1 \leq y \leq -\f14$, we assumed $\varphi' = 1$. Thus, $\varphi(y) = 1 + y$. Recalling the fast variable notation \eqref{eval} and performing an affine change of variables gives
\begin{equation}
 \int_{-1}^{-\f13} \gamma^2(y) \mathcal{V}^2
 \left(\frac{\varphi(y)}{\sqrt{\varepsilon}}\right) \dd y
 = 
 \sqrt{\varepsilon} \int_{0}^{\frac{2}{3\sqrt{\varepsilon}}}
 \gamma^2(\sqrt{\varepsilon}z - 1) \mathcal{V}^2(z) \dd z
 \leq \sqrt{\varepsilon} \|\gamma\|^2_\infty \|\mathcal{V}\|^2_{L^2(\R_+)}.
\end{equation}
Proceeding likewise for $\f13 \leq y \leq 1$ and  bounding $\sqrt{2}$ by $2$ yields~\eqref{eq.lemma.scale},
\end{proof}

\subsection{Estimates for the technical profile}

\begin{lemma} \label{lemma.W}
 Assume that \eqref{hyp.h.moments} holds for some $n \geq 3$. There exists $C_W$ such that, for every $\varepsilon \in (0,1)$, the solution $W^\e$ to \eqref{eq.W} satisfies, for every $t\geq0$,
 \begin{equation} \label{estimate.W}
  \| W^\e(t) \|_{L^\infty(-1,1)} + \| \partial_y W^\e(t) \|_{L^\infty(-1,1)} 
  \leq \e^{-\frac34} C_W.
 \end{equation}
\end{lemma}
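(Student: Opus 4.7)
The plan is to combine the time-decay estimates for $V$ provided by \cref{Lemma:V} with the fast-variable scaling of \cref{lemma.scale} to control $f_W^\e$ in $L^2_t L^2_y$ and $L^2_t H^1_y$, then run standard energy estimates for the Dirichlet heat equation in $y$ with small viscosity $\e$, and conclude by the 1D Sobolev embedding $H^1((-1,1)) \hookrightarrow L^\infty((-1,1))$.

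\smallskip

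\noindent\textbf{Bounding the source.} The coefficients $\chi''/\varphi^2$ and $\chi'\varphi'/\varphi^3$ are smooth and compactly supported inside $\{1/3 \leq |y| \leq 2/3\}$, where $\varphi$ is bounded away from $0$, so they satisfy the hypothesis of \cref{lemma.scale}. Applying that lemma to each of the two summands in \eqref{eq.FW} with $\mathcal{V}(z)=z^2 V$ and $\mathcal{V}(z)=z^3\partial_z V$ respectively yields
\begin{equation*}
 \| f_W^\e(t) \|_{L^2(-1,1)} \leq C\,\e^{1/4}\bigl( \|z^2 V(t,\cdot)\|_{L^2(\R_+)} + \|z^3 \partial_z V(t,\cdot)\|_{L^2(\R_+)}\bigr) \leq C\,\e^{1/4}\,|V(t)|_{H^{1,3}(\R_+)}.
\end{equation*}
Differentiating $\eval{\,\cdot\,}$ produces a chain-rule factor $\varphi'/\sqrt{\e}$; combined with another application of \cref{lemma.scale} to the resulting profiles, this gives
\begin{equation*}
 \| \partial_y f_W^\e(t) \|_{L^2(-1,1)} \leq C\,\e^{-1/4}\, |V(t)|_{H^{2,3}(\R_+)}.
\end{equation*}
By \cref{Lemma:V} with $m=3$ (admissible since $n\geq 3$, so $3\leq 2n+1$), both $|V|_{H^{1,3}}$ and $|V|_{H^{2,3}}$ are bounded by $C\,\bigl|\tfrac{\ln(2+t)}{2+t}\bigr|^{n-3/4}$. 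Since $n\geq 3$, one has $2(n-3/4)\geq 9/2 > 1$, so the time-square of this decay rate is integrable on $\R_+$, which gives
\begin{equation*}
 \int_0^\infty \|f_W^\e(s)\|_{L^2}^2 \ds \leq C\,\e^{1/2} \andf \int_0^\infty \|\partial_y f_W^\e(s)\|_{L^2}^2 \ds \leq C\,\e^{-1/2}.
\end{equation*}

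\smallskip

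\noindent\textbf{Energy estimates and Sobolev embedding.} Testing \eqref{eq.W} against $-\partial_{yy} W^\e$ (the boundary terms vanish thanks to $W^\e(t,\pm 1)=0$, hence $\partial_t W^\e(t,\pm 1)=0$) and applying Young's inequality yields
\begin{equation*}
 \tfrac{d}{dt}\|\partial_y W^\e\|_{L^2}^2 + \e\,\|\partial_{yy} W^\e\|_{L^2}^2 \leq \e^{-1}\|f_W^\e\|_{L^2}^2,
\end{equation*}
which, after integration from $0$ and using $W^\e(0)=0$, gives $\|\partial_y W^\e\|_{L^\infty_t L^2_y}\leq C\,\e^{-1/4}$. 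To upgrade to $H^2$, set $\widetilde W := \partial_{yy} W^\e$; since $\chi'$ and $\chi''$ both vanish near $y=\pm 1$, $f_W^\e$ vanishes on the boundary, and therefore the equation forces $\widetilde W(t,\pm 1) = \e^{-1}\bigl(\partial_t W^\e - f_W^\e\bigr)(t,\pm 1) = 0$. Thus $\widetilde W$ satisfies the heat equation with Dirichlet boundary conditions and source $\partial_{yy} f_W^\e$; the analogous energy argument (integrating by parts once to bring out $\partial_y f_W^\e$) yields
\begin{equation*}
 \tfrac{d}{dt}\|\widetilde W\|_{L^2}^2 + \e\,\|\partial_y \widetilde W\|_{L^2}^2 \leq \e^{-1}\|\partial_y f_W^\e\|_{L^2}^2,
\end{equation*}
so $\|\partial_{yy} W^\e\|_{L^\infty_t L^2_y}^2 \leq \e^{-1}\cdot C\,\e^{-1/2} = C\,\e^{-3/2}$. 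Finally, by 1D Sobolev embedding,
\begin{equation*}
 \|W^\e\|_{L^\infty_y} + \|\partial_y W^\e\|_{L^\infty_y} \lesssim \|W^\e\|_{L^2} + \|\partial_y W^\e\|_{L^2} + \|\partial_{yy} W^\e\|_{L^2} \lesssim \e^{1/4}+\e^{-1/4}+\e^{-3/4} \leq C_W\,\e^{-3/4},
\end{equation*}
which is \eqref{estimate.W}. The only genuinely delicate point is the boundary behaviour of $\widetilde W$ needed to justify the $H^2$ energy estimate; everything else is bookkeeping of powers of $\e$ and verification that the decay exponent $n-3/4$ is large enough (which is exactly the role of the hypothesis $n\geq 3$).
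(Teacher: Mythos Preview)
Your argument is correct and reaches the same $\|\partial_{yy}W^\e\|_{L^\infty_t L^2_y}\le C\e^{-3/4}$ endpoint as the paper, but by a different mechanism. The paper differentiates \eqref{eq.W} in \emph{time} and exploits $\partial_t V=\partial_{zz}V$ so that $\partial_t f_W^\e$ is still of the form ``smooth coefficient times $\{z^k\partial_z^\ell V\}$'' and hence keeps the good $\e^{1/4}$ scaling via \cref{lemma.scale}; a bare $L^1_t L^2_y$ energy estimate then gives $\|\partial_t W^\e\|_{L^\infty_t L^2_y}\lesssim\e^{1/4}$, and one simply reads off $\partial_{yy}W^\e=\e^{-1}(\partial_t W^\e-f_W^\e)$. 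You instead differentiate in \emph{space}, pay the chain-rule loss $\e^{-1/2}$ in $\partial_y f_W^\e$, and compensate by running an $\e$-weighted energy estimate on $\widetilde W=\partial_{yy}W^\e$. Your route requires the extra observation that $\widetilde W(t,\pm1)=0$ (which you justify correctly from the support of $\chi',\chi''$), whereas the paper's time-derivative route sidesteps any boundary issue for higher spatial derivatives. Both approaches use the same decay input from \cref{Lemma:V} with $m=3$.

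One harmless slip: in your final display you write $\|W^\e\|_{L^2}\lesssim\e^{1/4}$, but your own $H^1$ estimate together with Poincar\'e only gives $\|W^\e\|_{L^2}\lesssim\|\partial_y W^\e\|_{L^2}\lesssim\e^{-1/4}$. This does not affect the conclusion since $\e^{-1/4}\le\e^{-3/4}$ for $\e\in(0,1)$.
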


\begin{proof}
 Differentiating \eqref{eq.W} with respect to time, multiplying by $\partial_t W^\e$ and integrating by parts, we obtain the energy estimate
 \begin{equation}
  \| \partial_t W^\e \|_{L^\infty(\R_+;L^2(-1,1))}
  \leq 
  2 \| \partial_t f_W^\e \|_{L^1(\R_+;L^2(-1,1))}.
 \end{equation}
 Plugging this estimate in the equation \eqref{eq.W} yields
 \begin{equation} \label{yyW}
  \| \partial_{yy} W^\e \|_{L^\infty(\R_+;L^2(-1,1))}
  \leq 
  \frac{1}{\e} \left(
  \| f_W^\e \|_{L^\infty(\R_+;L^2(-1,1))} +
  2 \| \partial_t f_W^\e \|_{L^1(\R_+;L^2(-1,1))}
  \right).
 \end{equation}
 Thanks to estimate \eqref{eq.lemma.scale} from \cref{lemma.scale} applied to the definition \eqref{eq.FW} of $f_W^\e$, we obtain, for $t\geq 0$,
 \begin{equation} \label{fw1}
  \begin{split}
  \| f^\e_W(t) \|_{L^2(-1,1)} 
  & \leq
  2 \e^{\frac14} \| \chi'' \varphi^{-2} \|_{\infty} \| z^2 V(t,z) \|_{L^2(\R_+)}
  \\
  & \quad \quad + 2 \e^{\frac14} \| 2 \chi' \varphi' \varphi^{-3} \|_{\infty} \| z^3 \partial_z V(t,z) \|_{L^2(\R_+)}
  \\
  & \leq C \e^{\frac14} \| V(t) \|_{H^{1,3}(\R_+)},
  \end{split}
 \end{equation}
 where $C$ is a finite constant because, by construction, $\chi'$ and $\chi''$ vanish for $|y| \geq \frac23$, so that the division by $\varphi$ which vanishes at $y = \pm 1$ is not singular. Proceeding similarly and using the equation \eqref{eq.V} on $V$, we obtain
 \begin{equation} \label{fw2}
  \| \partial_t f^\e_W(t) \|_{L^2(-1,1)} 
  \leq C \e^{\frac14} \| V(t) \|_{H^{3,3}(\R_+)}.
 \end{equation}
 Combining \eqref{fw1} with \cref{Lemma:V} applied to $m = 3$, $n = 3$, $s = 1$, we obtain
 \begin{equation} \label{fw1.bis}
   \| f^\e_W(t) \|_{L^2(-1,1)} \leq C \e^{\frac14} \left| \frac{\ln (2+t)}{2+t} \right|^{\frac94}
   \leq C \e^{\frac14}
 \end{equation}
 Combining \eqref{fw2} with \cref{Lemma:V} applied to $m = 3$, $n = 3$, $s = 3$, we obtain
 \begin{equation} \label{fw2.bis}
  \| \partial_t f^\e_W \|_{L^1(\R_+;L^2(-1,1))} \leq C \e^{\frac14} \int_0^{+\infty} \left| \frac{\ln (2+t)}{2+t} \right|^{\frac94} \dd t 
  \leq 2 C \e^{\frac14}.
 \end{equation}
 Eventually, plugging \eqref{fw1.bis} and \eqref{fw2.bis} into \eqref{yyW} proves \eqref{estimate.W} thanks to the boundary conditions $W^\e(t,\pm 1) = 0$ and the Poincar\'e-Wirtinger inequality for $\partial_y W^\e$.
\end{proof}

\subsection{Proof of the decay of approximate trajectories}

We prove \cref{thm:uapp-estimate}. Recalling the definition \eqref{eq:uapp} of $\uapp$, we estimate the size of each term at the time $T/\e^\kappa$. 
\begin{itemize}
 \item Thanks to \eqref{hyp.h.support} and \eqref{eq:u0p0}, $u^0(T/\e^\kappa) = 0$.
 
 \item Thanks to \eqref{eq.lemma.scale} from \cref{lemma.scale} and \eqref{ineq.V.decay} from \cref{Lemma:V}, there holds
 \begin{equation}
  \begin{split}
   \| \chi \eval{v^0(T/\e^\kappa)} \|_{L^2_y}
  & \leq 2 \e^{\frac{1}{4}} \|\chi\|_\infty \| V(T/\e^\kappa) \|_{L^2(\R_+)}
  \\ & \leq 2 \e^{\frac{1}{4}} C \left| \frac{\ln (2+T/\e^\kappa)}{2+T/\e^\kappa}\right|^{\frac{3}{4}+n}
  \\ & \leq \tilde{C} \e^{1+\kappa(n-\frac{3}{4}(\frac{1}{\kappa}-1))} |\ln\e|^{n+\frac{3}{4}},
  \end{split}
 \end{equation}
 for some constant $\tilde{C}$ > 0.
 
 \item Thanks to \eqref{eq:magic-u1}, 
 \begin{equation} \label{u1_survivor}
  u^1(T/\e^\kappa) = \nabla^\perp[(1-\chi_\delta)\psi_b].
 \end{equation}
 Moreover, since $u_b$ satisfies \eqref{ub.int}, $u_b \in L^1(\band)$. In particular, since $u_b$ is divergence-free, this implies that, for all $x \in \R$,
 \begin{equation}
  \int_{-1}^{+1} u_b(x,y) \cdot \ex \dd y = 0, 
 \end{equation}
 so that $\psi_b$, which was defined as \eqref{psi_b} can equivalently be written as
 \begin{equation}
  \psi_b(x,y) = \int_{y}^1 u_b \cdot \ex \dd y.
 \end{equation}
 Thanks to \eqref{u1_survivor}, this implies that there exists a constant $C_{\delta} > 0$ which only depends on the norm of $\chi_\delta$ in $H^{1}(-1,1)$ such that
 \begin{equation}
  \e \| u^1(T/\e^\kappa) \|_{L^2(\band)} \leq \e C_\delta \| {u_b}_{\rvert \{|y| \geq 1-2\delta\}} \|_{L^2(\band)}.
 \end{equation}
 
 \item Thanks to estimate \eqref{estimate.W} from \cref{lemma.W}, 
 \begin{equation}
  \e^2 \| w^\e (T/\e^\kappa) \|_{L^\infty_y} \leq \e^{1+\frac{1}{4}} C_W.
 \end{equation}
 
\end{itemize}
Gathering these estimates concludes the proof of estimate \eqref{eq:uapp-small} of \cref{thm:uapp-estimate}.

% =========================================================================

\section{Estimates on the remainder}
\label{sec:r}

This section is devoted to the proof of \cref{thm:r-estimate}. An important difficulty to obtain some uniform energy estimates of $r^\varepsilon$ from system \eqref{eq.nse.r} is that the term $(r^\e \cdot \nabla) \uapp$ contains a term with a factor   $1/\sqrt{\varepsilon}$ due to the fast variation of the boundary layer term in the normal variable (see the expansion \eqref{eq:uapp} of $\uapp$). To deal with this difficulty we use a reformulation of this term where the singular factor is traded against a loss of derivative on $r^\varepsilon$ in the tangential direction $x$ (see \cref{ampli}). Then, we establish a long-time nonlinear Cauchy-Kovalevskaya estimate (see \cref{sec-analytic}) thanks to some tools from Littlewood-Paley theory which are recalled in \cref{LP}.

\begin{remark}
 The well-posedness of the Prandtl equations as well as the convergence of the Navier-Stokes equations to the Prandtl equations in the analytic setting dates back to \cite{MR1617542,MR1617538,MR2049030}. The seminal results of Caflisch and Sammartino require analyticity in both spatial directions, and only imply well-posedness of the Prandtl equations on a small time interval. Analytic techniques have been later used in \cite{MR3461362,MR3464051} to obtain large-time well-posedness for Prandtl equations by requiring analyticity only in the tangential direction.
\end{remark}

% =========================================================================

\subsection{Singular amplification to loss of derivative}
\label{ampli}

On the one hand, we use the expansion \eqref{eq:uapp} of $\uapp$ to expand
\begin{equation} \label{chaipa1}
 \begin{split}
  \left( r^\varepsilon \cdot \nabla \right) \uapp 
  = \frac{1}{\sqrt{\varepsilon}} \varphi' \chi r^\varepsilon_2 \eval{\partial_z  v^0}
  + r^\varepsilon_2 (\chi' \eval{v^0}  {+ \e^2 \partial_y w^\e}) 
  + \e \left( r^\varepsilon \cdot \nabla \right) u^1.
 \end{split}
\end{equation}
 Let  $M$  be the  operator which associates with any function $a \in L^2(-1,1)$, the function $M [a] $ defined for $y$ in $(-1,1)$ by 
\begin{equation}
  \label{eq:def-M}
(M [a])(y) := - \chi(y) \int_0^1 a (\pm 1 \mp s (1 \mp y)) \dd s,
 \end{equation}
where the signs are chosen depending on whether $\pm y \geq 0$. Using the null boundary condition and the divergence-free condition in \eqref{eq.nse.r} and the fact that $|\varphi'|=1$ where $\chi \neq 0$, we obtain that the first term in the right-hand side of \eqref{chaipa1} can be recast as 
\begin{equation} \label{recast1}
 \frac{1}{\sqrt{\varepsilon}} \varphi' \chi r^\varepsilon_2 \eval{\partial_z  v^0}
 = 
 (M [\partial_x r^\varepsilon_1]) \eval{z \partial_z  v^0}.
\end{equation}
On the other hand we decompose the term $( \uapp \cdot \nabla) r^\varepsilon$ of \eqref{eq.nse.r}, thanks to \eqref{eq:uapp}, into 
\begin{equation} \label{recast2}
 \left( \uapp \cdot \nabla \right) r^\varepsilon = (h - \chi \eval{V} {+ \e^2 W^\e}) \partial_x r^\varepsilon +  \varepsilon \left(  u^1  \cdot \nabla \right) r^\varepsilon.
\end{equation}
Thus, using \eqref{recast1} and \eqref{recast2}, the system \eqref{eq.nse.r} now reads
\begin{equation} \label{eq.nse.r.bis}
 \left\{
 \begin{aligned}
   \partial_t r^\varepsilon +  (h-\chi \eval{V} {+ \e^2 W^\e}) \partial_x  r^\varepsilon   - \varepsilon \Delta r^\varepsilon  + \nabla \pi^\varepsilon  & =  f^\varepsilon_{r}
   &&\quad \textrm{in } (0, T/ \varepsilon^\kappa) \times \band, \\
   \diverg r^\varepsilon & = 0
   &&\quad \textrm{in } (0, T/ \varepsilon^\kappa) \times \band, \\
   % \label{eq.nse.r.b}
   r^\varepsilon & = 0 
   &&\quad \textrm{on } (0, T/ \varepsilon^\kappa) \times \partial\band, \\
   r^\varepsilon \rvert_{t = 0} & = 0
   &&\quad \textrm{on } \band,
 \end{aligned}
 \right.
\end{equation}
where we introduce
\begin{equation}
 \label{sourcet}
 \begin{split}
  - f^\varepsilon_{r} 
  := 
  f^\varepsilon_{\mathrm{app}} 
  & + ( M [\partial_xr^\varepsilon_1 ]) \eval{z \partial_z  v^0} 
  + r^\varepsilon_2 (\chi' \eval{v^0} {+ \e^2 \partial_y w^\e})
  \\
  &  
  + \varepsilon \left( r^\varepsilon \cdot \nabla \right)  u^1 
  + \varepsilon \left(  u^1  \cdot \nabla \right) r^\varepsilon
  + \varepsilon \left(r^\varepsilon  \cdot \nabla \right) r^\varepsilon 
  .
 \end{split}
\end{equation}

% =========================================================================

\subsection{A few tools from Littlewood-Paley theory}
\label{LP}

To perform analytic estimates, we use Fourier analysis and Littlewood-Paley decomposition. We refer to \cite[Chapter 2]{MR2768550} for a detailed course on Littlewood-Paley theory. Although all the functions we consider in this section are defined on the band $\band = \R_x \times [-1,1]_y$, we only perform Fourier analysis and Littlewood-Paley decomposition in the tangential direction $x \in \R_x$. When a confusion is possible, we will use the subscripts $x$ or $y$ to stress the variable involved in the functional spaces. 

\paragraph{Dyadic partition of unity.} 

We recall that, for $a\in L^2(\band)$, we defined its Fourier transform $\cF a$ in the tangential direction as \eqref{eq:FOURIER}. We fix $\chilp, \philp \in C^\infty(\R,[0,1])$ such that
\begin{gather}
 \label{supp.philp}
 \supp \philp \subset \left \{\tau \in \R; \enskip \frac34 \leq |\tau| \leq \frac83 \right\}, 
 \\
 \label{supp.chilp}
 \supp \chilp \subset \left\{\tau \in \R; \enskip |\tau| \leq
\frac43 \right\},
 \\ 
 \label{philp1}
 \forall \tau \in \R^*, \quad 
 \sum_{j\in\Z} \philp(2^{-j}\tau)=1,
 \\
 \label{philp2}
 \forall \tau \in \R, \quad 
 \chilp(\tau)+ \sum_{j \in \N}\philp(2^{-j}\tau)=1,
 \\
 \label{philp3}
 \forall \tau \in \R^*, \quad
 \frac{1}{2} \leq \sum_{j\in\Z} \philp^2(2^{-j}\tau) \leq 1,
\end{gather}
The existence of such a dyadic partition of unity is proved in \cite[Proposition 2.10]{MR2768550}. For $k \in \Z$, we introduce the Fourier multipliers $\D_k$ and $\Low_k$ by defining, for any $a \in L^2(\band)$,
\begin{align}
 \label{def.dk}
 \D_k a & := \cF^{-1}\left(\philp(2^{-k}\xi)\mathcal{F}a(\xi,y)\right),
 \\
 \Low_k a & := \cF^{-1}\left(\chilp(2^{-k}\xi)\mathcal{F}a(\xi,y)\right).
\end{align}
The operators $\D_k$ and $\Low_k$ are with respect to the horizontal variable only. For $a \in L^2(\band)$, one has, thanks to \eqref{philp1} and \eqref{philp2},
\begin{equation} \label{SLOW}
 \Low_k a = \sum_{j\leq k-1} \D_j a.
\end{equation}

\paragraph{Homogeneous Besov spaces.} 

For $a \in L^2(\band)$, we will use for $s= 0$ and $s= \frac12$ the following quantity corresponding to a homogeneous Besov norm
\begin{equation} \label{eq:besov}
 \|a\|_{\besov{s}} := \sum_{k\in \Z} 2^{ks}\|\D_k
a\|_{L^2(\band)}.
\end{equation}
Since we will use such norms for functions whose Fourier transforms in $x$ are compactly supported, we do not provide more details on the definition of the corresponding functional spaces, referring for more to  \cite{MR2768550}.

\paragraph{Classical estimates.}

We recall the following classical estimates, for which we track the constants. First, we will use the following Bernstein type lemma from \cite[Lemma 2.1]{MR2768550}.

\begin{lemma} \label{lem:Bern}
 There exists a universal constant $\cb \geq 2$ such that the following properties hold. Let $1 \leq p \leq q \leq + \infty$, $\alpha \in \{0,1\}$, $k \in \Z$ and $a \in L^2(\band)$. 
 \begin{itemize}
  \item If the support of $\cF a$ is included in $\{ (\xi,y); \enskip 2^{-k} |\xi| \leq 100 \}$, then
 \begin{equation} \label{eq:bern-ball}
  \|\partial_{x}^\alpha a\|_{L^{q}_{x}(L^2_y)} \leq \cb
  2^{k\left(\alpha+\left(\frac{1}{p}-\frac{1}{q}\right)\right)}
  \|a\|_{L^{p}_{x}(L^2_y)}.
 \end{equation}
  \item If the support of $\cF a$ is included in $\{ (\xi,y); \enskip \f1{100} \leq 2^{-k} |\xi| \leq 100 \}$, then
 \begin{equation} \label{eq:bern-ring}
  \|a\|_{L^{p}_{x}(L^2_y)} \leq \cb
    2^{-k\alpha} \|\partial_{x}^\alpha a\|_{L^{p}_{x}(L^2_y)}.
 \end{equation}
 \end{itemize}
\end{lemma}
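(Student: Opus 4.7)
The plan is to use the classical convolution argument, exploiting that the partial Fourier transform $\cF$ acts only in the tangential variable $x$, so the $y$-variable plays the role of a parameter and all estimates reduce to one-dimensional Young inequalities in $x$ combined with Minkowski's integral inequality in $L^2_y$.

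For the first estimate, I would fix once and for all a smooth cut-off $\widetilde{\chi} \in C^\infty_c(\R)$ equal to $1$ on $[-100,100]$ and supported in $[-200,200]$, and set $\phi := \cF^{-1}\widetilde{\chi} \in \mathcal{S}(\R)$. Its dyadic dilation $\phi_k(x) := 2^k \phi(2^k x)$ has Fourier transform $\widetilde{\chi}(2^{-k}\cdot)$, which equals $1$ on the support of $\cF a$ under the hypothesis. Hence for a.e.\ $y$ one has the representation $a(\cdot,y) = \phi_k *_x a(\cdot,y)$, so $\partial_x^\alpha a = (\partial_x^\alpha \phi_k) *_x a$. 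Applying Minkowski's integral inequality in $L^2_y$ and then Young's convolution inequality in $x$ with exponent $r$ defined by $1/r = 1 + 1/q - 1/p$ gives
\begin{equation*}
  \|\partial_x^\alpha a\|_{L^q_x(L^2_y)} \leq \|\partial_x^\alpha \phi_k\|_{L^r(\R)} \, \|a\|_{L^p_x(L^2_y)}.
\end{equation*}
A direct change of variable yields $\|\partial_x^\alpha \phi_k\|_{L^r(\R)} = 2^{k(\alpha + 1 - 1/r)}\|\partial_x^\alpha \phi\|_{L^r(\R)} = 2^{k(\alpha + 1/p - 1/q)}\|\partial_x^\alpha \phi\|_{L^r(\R)}$, which gives the claimed bound.

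For the second estimate, I would pick $\widetilde{\varphi} \in C^\infty_c(\R\setminus\{0\})$ supported in $\{1/200 \leq |\tau| \leq 200\}$ such that $(i\tau)^\alpha \widetilde{\varphi}(\tau) = 1$ for $1/100 \leq |\tau| \leq 100$ (for $\alpha=0$, take $\widetilde{\varphi}$ equal to $1$ on the ring; for $\alpha=1$, take $\widetilde{\varphi}(\tau) = 1/(i\tau)$ on the ring with a smooth extension supported in the larger ring). Set $\psi := \cF^{-1}\widetilde{\varphi}$ and $\psi_k(x) := 2^{k(1-\alpha)}\psi(2^k x)$, whose Fourier transform is $2^{-k\alpha}\widetilde{\varphi}(2^{-k}\cdot)$. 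The support assumption on $\cF a$ then gives $a(\cdot,y) = \psi_k *_x \partial_x^\alpha a(\cdot,y)$ for a.e.\ $y$, and Young's inequality with $r=1$ produces
\begin{equation*}
  \|a\|_{L^p_x(L^2_y)} \leq \|\psi_k\|_{L^1(\R)}\, \|\partial_x^\alpha a\|_{L^p_x(L^2_y)} = 2^{-k\alpha} \|\psi\|_{L^1(\R)}\, \|\partial_x^\alpha a\|_{L^p_x(L^2_y)}.
\end{equation*}

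The only subtlety is producing a single universal constant $\cb$ valid uniformly in $p$, $q$, $\alpha \in \{0,1\}$ and $k \in \Z$. This requires bounding $\|\partial_x^\alpha \phi\|_{L^r(\R)}$ uniformly in $r \in [1,\infty]$, which follows from the Schwartz decay of $\phi$ (both $\|\partial_x^\alpha \phi\|_{L^1}$ and $\|\partial_x^\alpha \phi\|_{L^\infty}$ are finite, and log-convexity interpolates), and analogously for $\|\psi\|_{L^1}$. Taking $\cb$ to be the maximum of the four resulting finite quantities (two values of $\alpha$ for each of the two parts) and enlarging it to be at least $2$ concludes the proof.
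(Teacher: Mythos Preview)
Your proof is correct and follows the standard convolution argument for Bernstein inequalities. Note, however, that the paper does not actually prove this lemma: it simply quotes it as \emph{the Bernstein type lemma from \textup{[Lemma 2.1]} of Bahouri--Chemin--Danchin}, so there is no ``paper's own proof'' to compare against. Your argument is precisely the classical one found in that reference, adapted to the mixed norm $L^p_x(L^2_y)$ via Minkowski's integral inequality in the $y$-variable, and your handling of the uniform constant over all $p,q,r,\alpha$ is clean.
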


\begin{lemma} \label{lem:gns}
 Let $a \in H^1_0([-1,1]_y)$. Then
 \begin{equation} \label{eq:gns}
  \| a \|_{L^\infty_y} \leq \| a \|_{L^2_y}^{\f12} \| \partial_y a \|_{L^2_y}^{\f12}.
 \end{equation}
\end{lemma}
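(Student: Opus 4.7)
\textbf{Proof proposal for \cref{lem:gns}.} This is the classical one-dimensional Gagliardo--Nirenberg inequality on an interval with Dirichlet endpoints; the only subtlety is producing the sharp constant $1$ rather than $\sqrt{2}$, which is why one cannot simply integrate from a single endpoint. The plan is to exploit the fact that $a(\pm 1)=0$ by writing $a^2(y)$ using the fundamental theorem of calculus from both boundary points and averaging.

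By density, it suffices to treat $a \in C^1_c((-1,1))$. Fix $y \in [-1,1]$. Since $a(-1)=a(1)=0$, one has
\begin{equation}
a^2(y) \;=\; 2\int_{-1}^{y} a(s)\,\partial_y a(s)\,\dd s \;=\; -\,2\int_{y}^{1} a(s)\,\partial_y a(s)\,\dd s.
\end{equation}
Averaging these two identities and applying the triangle inequality gives
\begin{equation}
|a^2(y)| \;\leq\; \int_{-1}^{y} |a(s)\,\partial_y a(s)|\,\dd s + \int_{y}^{1}|a(s)\,\partial_y a(s)|\,\dd s \;=\; \int_{-1}^{1} |a(s)\,\partial_y a(s)|\,\dd s.
\end{equation}
A single application of the Cauchy--Schwarz inequality yields
\begin{equation}
|a(y)|^2 \;\leq\; \|a\|_{L^2_y}\,\|\partial_y a\|_{L^2_y}.
\end{equation}
Taking the supremum over $y \in [-1,1]$ and the square root concludes the proof.

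I do not expect any serious obstacle: the averaging trick is exactly what trades the naive constant $\sqrt{2}$ (obtained by bounding either side separately) for the sharp constant $1$ quoted in \eqref{eq:gns}.
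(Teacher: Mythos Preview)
Your proof is correct and self-contained: the averaging trick using both endpoints is exactly what removes the spurious factor $\sqrt{2}$, and the final Cauchy--Schwarz step is clean.

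The paper itself takes a different, purely bibliographic route: it does not argue at all but simply cites the classical Gagliardo--Nirenberg paper and then \cite[Corollary 5.12]{MOROSI2017} for the sharp constant (noting that the latter in fact gives $2^{-\frac12}$, which is even better than the $1$ stated here). Your approach has the advantage of being entirely elementary and independent of external references, at the cost of yielding only the constant $1$ rather than $2^{-\frac12}$; since the paper only ever uses \eqref{eq:gns} with constant $1$ (see the proofs of \cref{lem:embedding} and \cref{lem:tg}), nothing is lost.
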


\begin{proof}
 This is a classical Gagliardo-Nirenberg interpolation inequality (see \cite{MR0109940}). The fact that \eqref{eq:gns} holds with a unit constant for this particular choice of exponents is proved for example in \cite[Corollary 5.12]{MOROSI2017} (which in fact yields a constant $2^{-\f12}$). 
\end{proof}

As a consequence of \cref{lem:Bern}, we have the following embedding. Indeed this is the main motivation for considering the $\ell^{1}$ norm rather than the $\ell^{2}$ norm in the definition of the homogeneous Besov norms $\besov{s}$.

\begin{lemma} \label{lem:embedding}
 Let $a \in H^1_0(\band)$. There holds,
 \begin{align} 
  \label{embed1}
  \sum_{k\in\Z} 2^{\frac{k}2} \|\D_k a\|_{L^2_x (L^\infty_y)} 
  & \leq \cb \|\na a\|_{\besov{0}}, \\
  \label{embed2}
  \sum_{k\in\Z} \|\D_k a \|_{L^\infty(\band)} 
  & \leq \cb^2 \|\na a\|_{\besov{0}}.
 \end{align}
\end{lemma}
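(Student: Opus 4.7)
The plan is to dyadically decompose in the tangential direction and handle each block by combining the one-dimensional Bernstein estimates of \cref{lem:Bern} in $x$ with the Gagliardo-Nirenberg inequality of \cref{lem:gns} in $y$. The key observation is that $\D_k$, being a tangential Fourier multiplier, commutes with $\partial_y$, so it is enough to prove block-wise bounds and sum over $k\in\Z$.

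For \eqref{embed1}, I would first apply \cref{lem:gns} in the normal variable pointwise in $x$ to the function $y\mapsto (\D_k a)(x,y)$, then Cauchy-Schwarz in $x$, yielding
\[
 \|\D_k a\|_{L^2_x(L^\infty_y)} \leq \|\D_k a\|_{L^2(\band)}^{1/2}\,\|\partial_y \D_k a\|_{L^2(\band)}^{1/2}.
\]
Since $\cF(\D_k a)$ is supported in the ring $\{3/4\leq 2^{-k}|\xi|\leq 8/3\}$, the ring Bernstein estimate \eqref{eq:bern-ring} (with $p=2$, $\alpha=1$) trades one horizontal derivative for a factor $2^{-k}$: $\|\D_k a\|_{L^2(\band)} \leq \cb\, 2^{-k}\,\|\partial_x \D_k a\|_{L^2(\band)}$. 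Substituting, multiplying by $2^{k/2}$, and bounding the geometric mean by the full gradient gives
\[
 2^{k/2}\,\|\D_k a\|_{L^2_x(L^\infty_y)} \leq \cb^{1/2}\,\|\partial_x \D_k a\|_{L^2(\band)}^{1/2}\,\|\partial_y \D_k a\|_{L^2(\band)}^{1/2} \leq \cb^{1/2}\,\|\D_k \na a\|_{L^2(\band)}.
\]
Summing over $k\in\Z$ and using $\cb\geq 2$ yields \eqref{embed1}.

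For \eqref{embed2}, I would apply the ball Bernstein estimate \eqref{eq:bern-ball} in $x$ (one-dimensional version, with $y$ frozen) with $p=2$, $q=\infty$, $\alpha=0$, to obtain $\|(\D_k a)(\cdot,y)\|_{L^\infty_x}\leq \cb\, 2^{k/2}\,\|(\D_k a)(\cdot,y)\|_{L^2_x}$. Taking the $L^\infty_y$ norm on both sides and invoking Minkowski's integral inequality, $\|\D_k a\|_{L^\infty_y(L^2_x)}\leq \|\D_k a\|_{L^2_x(L^\infty_y)}$, I get $\|\D_k a\|_{L^\infty(\band)}\leq \cb\, 2^{k/2}\,\|\D_k a\|_{L^2_x(L^\infty_y)}$, and summing, together with \eqref{embed1} already established, delivers \eqref{embed2} with the constant $\cb^2$. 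No genuine obstacle is expected: the condition $a\in H^1_0$ is only used to invoke \cref{lem:gns}, and the ring structure of the Littlewood-Paley support ensures that the $2^{-k}$ trade works uniformly for $k\in\Z$, so that the $\besov{1/2}$ weighting is exactly absorbed.
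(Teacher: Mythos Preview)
Your proposal is correct and follows essentially the same route as the paper: for \eqref{embed1} you combine the one-dimensional Gagliardo--Nirenberg inequality of \cref{lem:gns} in $y$ with the ring Bernstein estimate \eqref{eq:bern-ring} in $x$, exactly as in the paper's display \eqref{eq:jus1}; for \eqref{embed2} you use the ball Bernstein estimate \eqref{eq:bern-ball} to pass from $L^2_x$ to $L^\infty_x$ and then invoke \eqref{embed1}, matching the paper's \eqref{eq:jus2}. Your intermediate Minkowski step for the mixed norms is a slightly more explicit justification of what the paper writes directly.
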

\begin{proof}
 Let $a \in H^1_0(\band)$. Hence, for almost every $x \in \R_x$, $a(x,\cdot) \in H^1_0([-1,1]_y)$ and we can apply \cref{lem:gns}. Using \eqref{eq:gns}, Cauchy-Schwarz then \eqref{eq:bern-ring} yields
 \begin{equation} \label{eq:jus1}
  \begin{split}
    2^{\frac{k}2}\|\D_k a\|_{L^2_x (L^\infty_y)}
    & \leq 2^{\frac{k}2}\|\D_k a\|_{L^2}^{\f12}\|\D_k \partial_y a\|_{L^2}^{\f12}
    \\
    & \leq \cb^{\f12} \|\D_k \p_x a\|_{L^2}^{\f12}\|\D_k \partial_y a\|_{L^2}^{\f12}
    \\
    & \leq \cb^{\f12} \|\D_k \nabla a\|_{L^2},
  \end{split}
 \end{equation}
 Hence, since $\cb \geq 1$, \eqref{eq:jus1} proves \eqref{embed1} by the definition \eqref{eq:besov} of the norm $\besov{0}$. Moreover, thanks to \eqref{eq:bern-ball},
 \begin{equation} \label{eq:jus2} 
  \|\D_k a\|_{L^\infty_x(L^\infty_y)} 
  \leq \cb 2^{\frac{k}2}\|\D_k a\|_{L^2_x (L^\infty_y)}.
 \end{equation}
 Gathering \eqref{embed1} and \eqref{eq:jus2} proves \eqref{embed2}.
\end{proof}

\begin{lemma} \label{lem:tg}
 Let $a \in H^1_0(\band)$ such that $\diverg a = 0$. For each $k \in \Z$,
\begin{equation} \label{eq:tg}
\|\D_k a_2 \|_{L^2_x(L^\infty_y)} \leq \cb 2^{\f{k}2} \|\D_k a\|_{L^2(\band)}, 
 \end{equation}
 \end{lemma}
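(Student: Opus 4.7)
The plan is to combine the Gagliardo-Nirenberg type inequality of Lemma~\ref{lem:gns} with the divergence-free constraint, taking advantage of the fact that the vertical component $a_2$ vanishes on $\partial\band$. Note that since $\D_k$ acts only in the tangential variable $x$, for almost every $x \in \R$ the function $y \mapsto (\D_k a_2)(x,y)$ lies in $H^1_0([-1,1]_y)$.

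First, I would apply \eqref{eq:gns} pointwise in $x$ to obtain
\begin{equation}
 \| \D_k a_2 (x,\cdot)\|_{L^\infty_y} \leq \| \D_k a_2 (x,\cdot)\|_{L^2_y}^{\frac12} \| \D_k \partial_y a_2 (x,\cdot)\|_{L^2_y}^{\frac12}.
\end{equation}
Then I would use the divergence-free constraint $\diverg a = 0$, which, after commuting with the tangential Fourier multiplier $\D_k$, reads $\D_k \partial_y a_2 = - \D_k \partial_x a_1$. Squaring the previous pointwise estimate, integrating in $x$, and applying the Cauchy--Schwarz inequality yields
\begin{equation}
 \| \D_k a_2\|_{L^2_x(L^\infty_y)}^2
 \leq \|\D_k a_2\|_{L^2(\band)} \| \D_k \partial_x a_1 \|_{L^2(\band)}.
\end{equation}

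At this point, since $\mathcal{F} a$ has Fourier support (in $x$) inside the ring where $\philp(2^{-k}\cdot)$ is supported, I would apply the Bernstein inequality \eqref{eq:bern-ball} with $\alpha = 1$ and $p = q = 2$ to get $\|\D_k \partial_x a_1\|_{L^2(\band)} \leq \cb 2^k \|\D_k a_1\|_{L^2(\band)}$. Combining everything with the trivial bounds $\|\D_k a_i\|_{L^2(\band)} \leq \|\D_k a\|_{L^2(\band)}$ for $i=1,2$ gives
\begin{equation}
 \|\D_k a_2\|_{L^2_x(L^\infty_y)}^2 \leq \cb 2^k \|\D_k a\|_{L^2(\band)}^2,
\end{equation}
and taking the square root yields \eqref{eq:tg} with the (slightly better) constant $\cb^{1/2}$, which is of course smaller than $\cb$ since $\cb \geq 2$.

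This proof is essentially a routine assembly of the previously established tools, so no genuine obstacle is expected; the only subtle point is noticing that the vanishing boundary condition on $a_2$ survives the application of the horizontal Littlewood-Paley projector $\D_k$, which is precisely what allows Lemma~\ref{lem:gns} to be invoked pointwise in $x$ on $\D_k a_2$.
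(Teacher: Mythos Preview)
Your proof is correct and follows essentially the same approach as the paper's own proof: apply the Gagliardo--Nirenberg inequality \eqref{eq:gns} pointwise in $x$ to $\D_k a_2$, use Cauchy--Schwarz in $x$, replace $\partial_y a_2$ by $-\partial_x a_1$ via the divergence-free condition, and then apply Bernstein's inequality \eqref{eq:bern-ball}. Your observation that the argument actually yields the constant $\cb^{1/2}$ is also implicit in the paper, which simply notes $\cb \geq 1$ at the end.
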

\begin{proof}
 Let $a \in H^1_0(\band)$. Hence, for almost every $x \in \R_x$, $a(x,\cdot) \in H^1_0([-1,1]_y)$ and we can apply \cref{lem:gns}. Using \eqref{eq:gns} and Cauchy-Schwarz, we obtain
\begin{equation} \label{eq:tq1}
\|\D_k a_2\|_{L^2_x(L^\infty_y)} \leq \|\D_k a_2\|_{L^2}^{\f12}\|\D_k\p_y a_2\|_{L^2}^{\f12} .
\end{equation}
 Then, using that $\diverg a = 0$ and \cref{lem:Bern}, we observe that 
\begin{equation} \label{eq:tq2}
 \|\D_k\p_y a_2\|_{L^2} =  
 \|\D_k\p_x a_1 \|_{L^2} \leq \cb  2^{k}   \|\D_k a_1\|_{L^2}.
\end{equation}
 Gathering \eqref{eq:tq1} and \eqref{eq:tq2} proves \eqref{eq:tg} since $\cb \geq 1$.
\end{proof}

\paragraph{Paraproduct decomposition.}
We shall use the Bony's decomposition (see \cite{MR631751}) for the horizontal variable: 
\begin{equation} \label{Bony} 
fg=\TT_f g+\TT_{g}f+\RR(f,g), 
\end{equation} 
where 
\begin{align} 
 \TT_f g & := \sum_k \Low_{k-1}f\D_k g,
 \\
 \RR(f,g) & := \sum_k{\D}_kf\widetilde{\D}_{k}g
 \\
 \label{eq:wt-delta}
 \widetilde{\D}_k g & := 
 \sum_{|k-k'|\le 1}\D_{k'}g. 
\end{align} 
Thanks to the support properties \eqref{supp.philp} of $\philp$ and \eqref{supp.chilp} of $\chilp$, the following lemma holds.

\begin{lemma} \label{support}
For any $f$, $g$ and $h$ in $L^2(\band)$, 
\begin{align}
 \label{supportT}
 \langle  \TT_{f} g  ,\D_k h \rangle
 & = 
 \sum_{k'\in \Z / \ |k'-k|\leq 4} \, 
  \langle (\Low_{k'-1}  f)(\D_{k'} g) ,\D_k h \rangle,
  \\ \label{supportR}
  \langle  \RR(f,g)  ,\D_k h \rangle
 & = 
 \sum_{k' \in \Z/ \ k' \geq k-3} \, 
  \langle  ( \D_{k'}  f)(\widetilde{\D}_{k'} g)  ,\D_k h \rangle.
 \end{align}
\end{lemma}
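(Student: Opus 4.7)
The plan is to reduce both identities to disjoint-support arguments in the tangential Fourier variable. The basic principle is that for any $a, b \in L^2(\band)$, Plancherel in $x$ (with $y$ as a passive parameter) gives $\langle a, b \rangle = 0$ whenever the tangential Fourier supports of $a$ and $b$ are disjoint. Thus the strategy is to compute the Fourier support of each individual term appearing in the sums defining $\TT_f g$ and $\RR(f,g)$, and eliminate those which cannot overlap with the support of $\mathcal{F}(\D_k h)$. By \eqref{def.dk} and \eqref{supp.philp}, the latter is the annulus $\mathcal{A}_k := \{ \frac{3}{4}\cdot 2^k \leq |\xi| \leq \frac{8}{3}\cdot 2^k \}$.

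For the paraproduct identity \eqref{supportT}, I would use that multiplication in physical space corresponds to convolution in frequency, so the Fourier support of $(\Low_{k'-1}f)(\D_{k'}g)$ lies in the Minkowski sum of the supports of $\mathcal{F}(\Low_{k'-1}f)$ and $\mathcal{F}(\D_{k'}g)$. Thanks to \eqref{supp.chilp} and \eqref{supp.philp}, these are contained in $\{|\xi_1| \leq \frac{2}{3}\cdot 2^{k'}\}$ and $\{\frac{3}{4}\cdot 2^{k'} \leq |\xi_2| \leq \frac{8}{3}\cdot 2^{k'}\}$ respectively. The triangle inequality then places the convolution in an annulus of the form $\{c_1 2^{k'} \leq |\xi| \leq c_2 2^{k'}\}$ with explicit constants. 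A short dimensional comparison with $\mathcal{A}_k$ shows the two annuli overlap only when $|k'-k| \leq 4$, and all other terms drop out of the sum.

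For the remainder identity \eqref{supportR}, I would proceed similarly. Using \eqref{eq:wt-delta}, the Fourier supports of $\D_{k'}f$ and $\widetilde{\D}_{k'}g$ both lie in balls of radius comparable to $2^{k'}$, so their convolution is contained in a ball $\{|\xi| \leq 2^{k'+3}\}$ (no lower bound, since now the two frequencies can cancel). For this ball to intersect $\mathcal{A}_k$, one needs $2^{k'+3} \geq \frac{3}{4}\cdot 2^k$, which forces $k' \geq k-3$. Summing only over the surviving indices yields \eqref{supportR}.

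There is no real obstacle here; this is a textbook Littlewood-Paley support computation. The only point that demands a little care is bookkeeping: one has to remember that all Fourier support considerations and the Plancherel identity are applied in the tangential variable $x$ alone, with $y$ treated as a parameter throughout, and that the actual numerical thresholds ($|k'-k|\leq 4$ and $k'\geq k-3$) are dictated precisely by the choice of supports \eqref{supp.philp}--\eqref{supp.chilp} of $\philp$ and $\chilp$.
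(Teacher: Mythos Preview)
Your proposal is correct and follows precisely the approach the paper itself implies: the paper does not give a detailed proof of this lemma but simply states that it follows from the support properties \eqref{supp.philp} of $\philp$ and \eqref{supp.chilp} of $\chilp$, which is exactly the disjoint-Fourier-support computation you carry out. The bookkeeping you sketch (convolution of supports, comparison of the resulting annulus or ball with $\mathcal{A}_k$) is the standard argument and yields the stated thresholds.
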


\paragraph{Analyticity by Fourier multipliers.}
Let $|\partial_x|$ denote the Fourier multiplier with symbol~$|\xi|$. We associate with any positive $C^1$ function of time $\rho$, the operator $\opphi$ mapping any reasonable function $f(t,x,y)$ (say such that $f \in L^1_{\mathrm{loc}}(L^2_N(\band))$, for some $N\in\N$), to
\begin{equation} \label{eq2.4}
 (\opphi f )(t,x,y) := \cF^{-1}\bigl(e^{\rho(t) |\xi| }\cF f(t,\xi,y)\bigr)(x). 
\end{equation} 
Recall that $\mathcal{F}$ denotes the Fourier transform with respect to the tangential variable $x$, see~\eqref{eq:FOURIER}.
The function  $\rho$  describes the evolution of the radius of analyticity of the considered function. Below we establish a long-time Cauchy-Kovalevskaya estimate, for which the function  $\rho$  decays in time but not linearly. 

\paragraph{Product estimates for analytic functions.}
For $a \in L^2(\band)$, we introduce the notation
\begin{equation} \label{eq:plus}
 a^+ := \cF^{-1} |\mathcal{F}a|.
\end{equation}

\begin{lemma} \label{vandamme}
Let $N\in\N^*$ and $a, b, c \in L^2_N(\band)$. There holds
\begin{align} 
 \label{eq:vandamme0}
 \| a^+ \|_{L^2} & = \| a \|_{L^2}, 
 \\
 \label{eq:vandamme_fou}
 \left| \langle \opphi \pn (ab), c \rangle \right| 
 & \leq \left| \langle (\opphi a^+) (\opphi b^+), c^+ \rangle \right|.
 \end{align}
\end{lemma}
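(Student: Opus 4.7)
The plan is to carry out all computations on the tangential Fourier side, exploiting that $L^2_N(\band)$ is preserved by $\opphi$ since the exponential weight is uniformly bounded on the compact frequency support $[-N,N]$.

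The first identity is immediate from Plancherel in the variable $x$ (for each fixed $y$): by definition $\cF a^+ = |\cF a|$, so $|\cF a^+|^2 = |\cF a|^2$ pointwise, and integration in $\xi$ and $y$ yields $\|a^+\|_{L^2(\band)} = \|a\|_{L^2(\band)}$.

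For the product inequality, I would first write $\cF[\opphi \pn(ab)]$ explicitly. Since $a,b \in L^2_N(\band)$, the transforms $\cF a$ and $\cF b$ are supported in $[-N,N]$, so (up to a fixed factor of $2\pi$ from the Fourier convention) $\cF(ab)(\xi,y)$ is the convolution $\int_\R \cF a(\xi-\eta,y)\cF b(\eta,y)\,d\eta$, supported in $[-2N,2N]$, and
\[
 \cF[\opphi\pn(ab)](\xi,y) = e^{\rho|\xi|}\mathbf{1}_{[-N,N]}(\xi)\int_\R \cF a(\xi-\eta,y)\,\cF b(\eta,y)\,d\eta.
\]
The core observation is the subadditivity of the absolute value, $|\xi| \leq |\xi-\eta|+|\eta|$, which splits the exponential weight as $e^{\rho|\xi|} \leq e^{\rho|\xi-\eta|}e^{\rho|\eta|}$. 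Bringing absolute values inside the integral, using $\cF a^+ = |\cF a|$ and $\cF b^+ = |\cF b|$, and bounding $\mathbf{1}_{[-N,N]} \leq 1$, one obtains the pointwise bound
\[
 \bigl|\cF[\opphi\pn(ab)](\xi,y)\bigr| \leq \int_\R \cF(\opphi a^+)(\xi-\eta,y)\,\cF(\opphi b^+)(\eta,y)\,d\eta = \cF\bigl[(\opphi a^+)(\opphi b^+)\bigr](\xi,y),
\]
and this right-hand side is moreover nonnegative, since it is a convolution of nonnegative functions.

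To conclude, I would apply Plancherel in $x$ to the inner product $\langle \opphi\pn(ab), c\rangle$, use the elementary estimate $|\langle f,g\rangle| \leq \frac{1}{2\pi}\int |\cF f|\,|\cF g|\,d\xi\,dy$, and invoke $\cF c^+ = |\cF c|$ together with the pointwise bound just obtained:
\[
 |\langle \opphi\pn(ab), c\rangle| \leq \frac{1}{2\pi}\int \cF\bigl[(\opphi a^+)(\opphi b^+)\bigr]\,\cF c^+ \,d\xi\,dy = \langle (\opphi a^+)(\opphi b^+), c^+ \rangle,
\]
the last equality being Plancherel once more, and the right-hand side being nonnegative hence equal to its own absolute value. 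No substantive obstacle arises here; the statement is essentially a packaging of the elementary inequality $e^{\rho|\xi|} \leq e^{\rho|\xi-\eta|}e^{\rho|\eta|}$ combined with the triangle inequality on Fourier transforms. The only minor points of care are keeping track of the frequency supports (to justify the convolution formula in $L^2_N$ and see that the $\pn$ factor can be harmlessly dropped on the right) and maintaining a consistent Fourier convention so that the $2\pi$ factors cancel identically on the two sides.
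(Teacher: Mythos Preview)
Your proof is correct and follows essentially the same approach as the paper: Plancherel for \eqref{eq:vandamme0}, and for \eqref{eq:vandamme_fou} writing the inner product on the Fourier side via the convolution formula, using the triangle inequality $|\xi|\leq|\xi-\eta|+|\eta|$ to split the exponential weight, taking absolute values, and applying Plancherel again. The only cosmetic difference is that you first isolate a pointwise bound on $\cF[\opphi\pn(ab)]$ before pairing with $c$, whereas the paper estimates the full inner product directly; the content is identical.
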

\begin{proof}
 Equality \eqref{eq:vandamme0} is an immediate consequence of the definition \eqref{eq:plus} and Plancherel's theorem. Moreover, by Plancherel's theorem, the normalization \eqref{eq:FOURIER}, the triangle inequality and Plancherel's theorem once more, we have that
 \begin{equation}
  \begin{split}
  \left| \langle \opphi \pn (ab), c \rangle \right| 
  & = \frac{1}{2\pi} \left| \int_y \int_{|\xi|\leq N} \cF c(\xi) e^{\rho |\xi|} 
  \int_\eta \cF a (\xi - \eta ) \cF b(\eta) \dd \eta \dd \xi \dd y
  \right| 
  \\
  & \leq 
  \frac{1}{2\pi} \int_y \int_{\xi\in\R} |\cF c(\xi)|  
  \int_\eta e^{\rho |\xi-\eta|} |\cF a (\xi - \eta )| e^{\rho |\eta|} |\cF b(\eta)| \dd \eta \dd \xi \dd y
  \\
  & =  \langle (\opphi a^+) (\opphi b^+), c^+ \rangle.
  \end{split}
 \end{equation}
 This scalar product is positive and this concludes the proof of \eqref{eq:vandamme_fou}.
\end{proof}
%

% ==============================================================================
\subsection{Long-time weakly nonlinear Cauchy-Kovalevskaya estimate}
\label{sec-analytic}

In this paragraph, we explain how we will prove a long-time weakly nonlinear Cauchy-Kovalevskaya estimate on the remainder. We start by defining quantities that will enable us to define the expected profile of analyticity $\rho(t)$. Then, we close the estimate relying on a Grönwall-type argument. In the following paragraphs, we will prove the required estimates.

\begin{remark}
 The idea of closing an estimate on a nonlinear function of the solution to control the loss of analyticity dates back to Chemin in \cite{MR2145938}. It was later used in the context of anisotropic Navier-Stokes equations in \cite{MR2776367} and, more recently, for Prandtl equations in~\cite{MR3464051}, using only analyticity in the tangential direction.
\end{remark}

\subsubsection{Friedrichs' regularization scheme}

In order for our manipulations to make sense, we will restrict \eqref{eq.nse.r.bis} to a bounded range of frequencies. Then, we establish estimates which are independent on the considered range and we pass to the limit. This process was introduced by Friedrichs in \cite{MR0009701} (see also \cite{MR2155019} for a recent example of the passage to the limit). Let $N\in \N$. Instead of \eqref{eq.nse.r.bis}, we consider the modified equation
\begin{equation} \label{eq.nse.r.ter}
 \left\{
 \begin{aligned}
   \partial_t r^\varepsilon_N +  (h-\chi \eval{V} {+ \e^2 W^\e}) \partial_x  r^\varepsilon_N   - \varepsilon \Delta r^\varepsilon_N  + \nabla \pi^\varepsilon_N  & =  f^\varepsilon_N
   &&\quad \textrm{in } (0, T/ \varepsilon^\kappa) \times \band, \\
   \diverg r^\varepsilon_N & = 0
   &&\quad \textrm{in } (0, T/ \varepsilon^\kappa) \times \band, \\
   r^\varepsilon_N & = 0 
   &&\quad \textrm{on } (0, T/ \varepsilon^\kappa) \times \partial\band, \\
   r^\varepsilon_N \rvert_{t = 0} & = 0
   &&\quad \textrm{on } \band,
 \end{aligned}
 \right.
\end{equation}
where we introduce
\begin{equation}
 \label{source.ter}
 \begin{split}
  - f^\varepsilon_N 
  := \pn 
  f^\varepsilon_{\mathrm{app}} 
  & + ( M [\partial_x r^\varepsilon_{N,1} ]) \eval{z \partial_z  v^0} 
  + r^\varepsilon_{N,2} (\chi' \eval{v^0} {+ \e^2 \partial_y w^\e})
  \\
  &  
  + \varepsilon \pn \left( r^\varepsilon_N \cdot \nabla \right)  u^1 
  + \varepsilon \pn \left(  u^1  \cdot \nabla \right) r^\varepsilon_N
  + \varepsilon \pn \left( r^\varepsilon_N  \cdot \nabla \right) r^\varepsilon_N
  .
 \end{split}
\end{equation}
In the sequel, to lighten the notations, we will write $r$ instead of $r^\e_N$ and we will omit the projections $\pn$. It will be clear from our proof that we perform \emph{a priori} estimates which are independent of $N$. Therefore, using usual compactness arguments, our proof will also yield the same energy estimate for the initial equation \eqref{eq.nse.r.bis}. Since this argument is quite classical, we will only detail the \emph{a priori} estimates. Even though this regularization process is transparent in the proof, it is necessary to ensure that all the quantities are well defined. 

\subsubsection{Definition of the analyticity profile}

We start by defining the analyticity radius that we will require on the coefficients and the source terms of the equation for the remainder
\begin{equation}
 \rho_0 := 2 + 10^2 \cb \int_0^{+\infty} \| z \partial_z v^0(t,z) \|_{L^\infty(\R_+)} \dd t.
\end{equation}
Recalling the definition \eqref{def.Hsm} of the space $H^{2,2}(\R_+)$, one has, for $t \geq 0$,
\begin{equation}
 \| z\partial_z V(t,z) \|_{L^\infty(\R_+)} \leq 2 \| V(t) \|_{H^{2,2}(\R_+)}.
\end{equation}
Hence, since $n \geq 2$, thanks to the decay estimate \eqref{ineq.V.decay} from \cref{Lemma:V}, $\rho_0 < +\infty$.
Up to a normalization constant due to Bernstein-type estimates, this radius corresponds to the total amount of the loss of derivative that we expect. Then, we set, for $t \geq 0$,
\begin{align} 
 \label{defalpha}
 \alpha_\e(t) & := \e^\kappa + (1+t^2)^{-1},
 \\
 \label{ell1}
 \ell_1(t) & := \sum_{k \in \Z} \| \oprhozero \D_{k} \nabla u^1(t) \|_{L^2(\band)}.
\end{align}
These quantities will help us to control the (non singular but long-time) amplification terms in the evolution of the remainder. We set
\begin{equation} \label{lstar}
 \beta(t) := \int_0^t \Big( 5 \|  \chi' \eval{v^0} {+\e^2 \p_y w^\e}  \|_{L^\infty (\band)} 
  + 10^9 \cb^2 \e \ell_1^2
  + 10 \alpha_\e \Big).
\end{equation}

\begin{proposition}[Proof in \cref{sec:beta}] \label{thm:beta}
 If $u_b$ satisfies \eqref{ub.anal} for a constant $C_b > 0$ and $\rho_b > \rho_0$, there exists $\beta_\star > 0$ such that, for $\e,\kappa \in (0,1)$,
 \begin{equation}
  \sup_{t\in[0,T/\varepsilon^\kappa]} \beta(t) 
  = \beta(T/\varepsilon^\kappa) 
  \leq \beta_\star.
 \end{equation}
\end{proposition}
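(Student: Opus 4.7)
The plan is to observe that since all three integrands in the definition \eqref{lstar} of $\beta$ are nonnegative, $\beta$ is nondecreasing on $[0,T/\e^\kappa]$, so $\sup\beta = \beta(T/\e^\kappa)$. It then suffices to bound each of the three contributions uniformly in $\e,\kappa \in (0,1)$ and define $\beta_\star$ as their sum.

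The $\alpha_\e$ contribution is immediate: by \eqref{defalpha}, $\int_0^{T/\e^\kappa}\alpha_\e\,dt = T + \int_0^{T/\e^\kappa}(1+t^2)^{-1}\,dt \leq T + \pi/2$. For the boundary layer term, I would split $\|\chi'\eval{v^0}+\e^2\p_y w^\e\|_{L^\infty(\band)}$ into two pieces. For $\chi'\eval{v^0}$, evaluating in the fast variable gives $\|\chi'\eval{v^0}(t)\|_{L^\infty(\band)} \leq \|\chi'\|_\infty\|V(t)\|_{L^\infty(\R_+)}$. Combining the one-dimensional embedding $\|V\|_{L^\infty(\R_+)} \leq C\|V\|_{H^{1,0}(\R_+)}$ with \cref{Lemma:V} applied at $s=1,\,m=0$ yields $\|V(t)\|_{L^\infty(\R_+)} \leq C|\ln(2+t)/(2+t)|^{n+3/4}$. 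Since $n\geq 3$, this gives $n+3/4 > 1$, so the bound is integrable on $\R_+$ independently of $\e,\kappa$. For the second piece, \cref{lemma.W} yields $\e^2\|\p_y W^\e(t)\|_{L^\infty(-1,1)} \leq C_W \e^{5/4}$, so integration over $[0,T/\e^\kappa]$ produces $TC_W\e^{5/4-\kappa}$, uniformly bounded (and in fact vanishing) for $\kappa \in (0,1)$.

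For the $\e \ell_1^2$ contribution, the key step is to establish a time-uniform bound $\ell_1(t) \leq C_\star$ depending only on $C_b,\rho_b,\rho_0,\chi_\delta$. From the explicit formula \eqref{eq:magic-u1}, $u^1(t)$ is the sum of horizontal translates $\tau_h$ (with $|\beta(t)|\leq 1$) of $y$-cutoffs of the stream function $\psi_b$ associated with $u_b$; the translation $\tau_h$ leaves the tangential Fourier transform invariant in modulus, so $\|\oprhozero\D_k\nabla u^1(t)\|_{L^2}$ can be controlled in terms of analytic Littlewood–Paley norms of $u_b$ alone. Assuming $\rho_b > \rho_0$, one uses the analyticity estimate \eqref{ub.anal} together with Bernstein's inequality to show that, on the support of $\D_k$ for $k$ large, $e^{\rho_0|\xi|}$ is absorbed by the stretched decay of $\hat u_b$, yielding geometric decay in $k$; for small $k$ the exponential weight is uniformly bounded and standard Bernstein inequalities combined with the embedded integrability \eqref{ub.int} (available in the overall scheme of the paper) ensure summability. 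This produces the uniform bound $\ell_1(t) \leq C_\star$, whence $\int_0^{T/\e^\kappa} \e\ell_1^2\,dt \leq TC_\star^2\,\e^{1-\kappa}$, uniformly bounded for $\kappa < 1$.

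Summing the three contributions and setting $\beta_\star$ to their total (with a margin) proves the claim. The main obstacle is the uniform-in-time bound on $\ell_1$: establishing that the $\ell^1$-sum over Littlewood–Paley blocks of $\oprhozero\D_k\nabla u^1(t)$ is finite requires carefully balancing the exponential weight $e^{\rho_0|\xi|}$ against the tangential analyticity of $u_b$ with radius $\rho_b$, handling simultaneously high-frequency analytic decay and low-frequency Bernstein-type estimates. The boundary-layer term poses only a minor technical issue, as it relies on the decay estimate of \cref{Lemma:V} which in turn requires the vanishing-moment conditions \eqref{hyp.h.moments} to be satisfied up to order $n\geq 3$.
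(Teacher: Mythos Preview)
Your proposal is correct and follows essentially the same term-by-term strategy as the paper: monotonicity of $\beta$, then separate uniform bounds on the boundary-layer piece (via \cref{Lemma:V} and \cref{lemma.W}), on $\int\alpha_\e$, and on $\e\int\ell_1^2$ via a time-uniform bound on $\ell_1$. The only notable packaging difference is that the paper isolates the high/low-frequency argument you sketch for $\ell_1$ into the preliminary inequality \eqref{plate}, $\sum_k\|\oprhozero\D_k a\|_{L^2_x(L^p_y)}\leq C_\rho(\|a\|_{L^1_x(L^p_y)}+\mathcal G^\rho_{1,p}(a))$, and then simply applies it to $a=\nabla u^1(t)$ together with \eqref{u1.anal} and \eqref{ub.int}; your description is the content of that inequality rather than a different route.
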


We consider the local solution $\rho_N(t)$ to the following nonlinear ODE:
\begin{equation} \label{RHO}
 \left\{
 \begin{aligned}
  \dot{\rho}_N(t) & = - 10^2 \cb \| z \partial_z v^0(t,z) \|_{L^\infty(\R_+)} 
  - 10^7 \cb^4 \e e^{\beta_\star} \|e^{\rho_N(t)|\partial_x|-\beta(t)} \na r^\e_N(t) \|_{\besov{0}} ,
  \\
  \rho_N(0) & = \rho_0 .
 \end{aligned}
 \right.
\end{equation}
Since, for almost every $t$, $r^\e_N(t) \in L^2_N(\band)$, the right-hand side is Lipschitz continuous with respect to $\rho_N$ (with constants that may depend on $N$). Hence, we can apply the Cauchy-Lipschitz theorem and consider the maximal solution of \eqref{RHO}. We set
\begin{equation}
 T^\ast_N := \sup~\left\{ t \in [0,T/\e^\kappa]; \enskip \rho_N(t) \geq 1 \right\}
\end{equation}
and consider for $t \leq T^\ast_N$, 
\begin{equation}
\label{niouvariable}
 \rphi := e^{\rho_N|\partial_x|-\beta} r^\varepsilon_N.
\end{equation}
In the sequel, we simply write $\rho$ instead of $\rho_N$ and $T^\ast$ instead of $T^\ast_N$ and we prove estimates which are uniform with respect to $N$.

\subsubsection{Grönwall-type energy estimate}

We start with deducing from \eqref{eq.nse.r.ter} that:
\begin{equation} \label{eq.nse.rPhi}
 \left\{
 \begin{aligned}
     \partial_t \rphi - \dot{\rho}  \vert \partial_x  \vert \rphi
 + \dot{\beta} \rphi 
  + (h-\chi \eval{V} {+\e^2 W^\e}) \partial_x  \rphi & && \\
 - \varepsilon \Delta \rphi   + \nabla  \opphibeta \pi   & =   \opphibeta f^\e_N
   &&\quad \textrm{on } (0, \frac{T}{\varepsilon^\kappa}) \times \band, \\
   \diverg \rphi  & = 0
   &&\quad \textrm{on } (0, \frac{T}{\varepsilon^\kappa}) \times \band, \\
    \rphi  & = 0 
   &&\quad \textrm{on } (0, \frac{T}{\varepsilon^\kappa}) \times \Gamma_\pm, \\
   \rphi  \rvert_{t = 0} & =0
   &&\quad \textrm{on } \band.
 \end{aligned}
 \right.
\end{equation}
We apply the dyadic operator $\D_k$ to \eqref{eq.nse.rPhi} and take the $L^2 (\band)$ inner product of the resulting equation with $\D_k\rphi$. We observe, by integration by parts, that the contributions due to the fourth and sixth terms vanish, so that 
\begin{equation}
 \begin{split}
 \f12\f{\dd}{\dd t}\|\D_k \rphi(t)\|_{L^2}^2
-  \dot{\rho} \,   \langle|\partial_x|\D_k\rphi,\D_k\rphi\rangle
& + \dot{\beta} \| \D_k \rphi(t) \|_{L^2}^2
+ \e \| \nabla  \D_k\rphi \|_{L^2}^2
\\
& = \langle\D_k  \opphibeta f^\e_N, \D_k\rphi\rangle.
 \end{split}
\end{equation} 
Above and below we simply denote by $L^2 $ the space $L^2 (\band)$. 
%
%The three terms on the first line are coercive terms which will be proved able to absorb the terms of the second line. 
%Let us observe that the first and the third ones are bilinear in $ \rphi$ whereas the second one is trilinear and comes from  Chemin's idea to associate the analytic phase with the norm of the solution. 
%
Using the definition \eqref{def.dk} of $\D_k$ and the support property \eqref{supp.philp} of $\philp$, we know that
\begin{equation}
 \langle |\partial_x| \D_k \rphi, \D_k \rphi \rangle \geq \frac{1}{2} 2^{k} \| \D_k \rphi \|_{L^2}^2.
\end{equation}
Then, integrating over $[0,t]$, we obtain
\begin{equation} \label{eq3.2a}
\begin{split}
 \f12 \|\D_k \rphi(t)\|_{L^2}^2
 & +  \f12 2^k \int_0^t |\dot{\rho}| \ \|\D_k\rphi\|_{L^2}^2
 + \int_0^t \dot{\beta} \| \D_k \rphi \|_{L^2}^2
 +  \e \int_0^t \| \nabla \D_k \rphi \|_{L^2}^2 
 \\
 \leq &
 \int_0^t\bigl|\langle\D_k  \opphibeta f^\e_N ,\D_k\rphi\rangle\bigr|.
\end{split}
\end{equation}
We take the square roots and  sum the resulting inequalities for $k\in\Z$ to deduce that 
\begin{equation} \label{avecrac}
\begin{split}
 & \sum_{k \in\Z}\|\D_k \rphi(t)\|_{L^2}
 + \sum_{k \in\Z} 2^\frac{k}{2}  \left( \int_0^t |\dot{\rho}|\,  \|\D_k\rphi\|_{L^2}^2 \right)^\frac12
 + \sqrt{2} \sum_{k\in\Z} \left( \int_0^t \dot{\beta} \|\D_k \rphi \|_{L^2}^2 \right)^\frac{1}{2}
 \\
 & + \sqrt{2\e} \sum_{k \in\Z}  \left(  \int_0^t \|\D_k \nabla \rphi\|_{L^2}^2 \right)^{\f12}
 \leq 
2 \sqrt{2} \sum_{k \in\Z} \left( \int_0^t |\langle\D_k  \opphibeta f^\e_N ,\D_k\rphi\rangle|\right)^\frac12.
\end{split}
\end{equation}

\begin{proposition}[Proof in \cref{sec-proof-grostruc}] 
 \label{grotruc}
 For $t \in [0,T^\ast]$, there holds
 \begin{equation}
  \label{eq-grotruc}
 \begin{split}
 2\sqrt{2} \sum_{k \in\Z} & \left(\int_0^t\bigl|\bigl(\D_k  \opphibeta f^\e_N   , \D_k \rphi\rangle\bigr|\right)^\frac12 
 \leq 
 \sum_{k\in\Z} 2^{\frac{k}{2}} \left( \int_0^t |\dot{\rho}|\,  \|\D_{k'} \rphi \|^2_{L^2} \right)^\frac12 
 \\ 
 & \quad 
 + \sqrt{2} \sum_{k\in\Z} \left( \int_0^t \dot{\beta} \|\D_{k} \rphi \|^2_{L^2} \right)^{\f12} 
 + \f14 \sqrt{\e} \sum_{k\in\Z} \left(  \int_0^t \|\D_k \nabla \rphi\|_{L^2}^2\right)^{\f12}
 \\
 & \quad 
 + \sqrt{2} \sum_{k \in\Z} \left(\int_0^t \frac{1}{\alpha_\e} \|  \oprhozero \D_k f^\varepsilon_{\mathrm{app}}\|_{L^2}^2 \right)^\frac12
 .
 \end{split}
\end{equation} 
\end{proposition}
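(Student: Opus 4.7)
The source term $-f^\e_N$ splits according to \eqref{source.ter} into six pieces, and we estimate the contribution of each piece to the left-hand side of \eqref{eq-grotruc} separately. The common mechanism is the elementary inequality
$$\int_0^t |\langle \D_k  \opphibeta g, \D_k \rphi\rangle| \leq \left(\int_0^t \frac{1}{\mu} \|\D_k \opphibeta g\|_{L^2}^2\right)^{\!\!1/2} \left(\int_0^t \mu \|\D_k \rphi\|_{L^2}^2\right)^{\!\!1/2},$$
where the positive weight $\mu(s)$ is chosen so that, after summation in $k \in \Z$, the second factor is absorbed by one of the three energy-type contributions on the right-hand side of \eqref{eq-grotruc}: the analyticity-loss integral $\sum_k 2^{k/2}(\int |\dot\rho|\|\D_k \rphi\|^2)^{1/2}$, the $\dot\beta$ integral, or the viscous term $\tfrac{\sqrt\e}{4}\sum_k(\int\|\D_k\nabla\rphi\|^2)^{1/2}$. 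Throughout, products of functions under the analytic weight $\opphibeta$ are handled by \cref{vandamme}, which reduces scalar products to pointwise-positive ones and allows us to treat the exponential weight as if it were absent, up to the harmless $L^2$-preserving $+$-operation.

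\textbf{The easier terms.} For the approximate-trajectory source $\pn f^\e_{\mathrm{app}}$ we pick $\mu = \alpha_\e$; the generous inequality $\dot\beta \geq 10\alpha_\e$ built into \eqref{lstar} absorbs the second factor into the $\dot\beta$ integral, while the first factor is exactly the last contribution of \eqref{eq-grotruc}. For $r^\e_{N,2}(\chi'\eval{v^0} + \e^2\p_y w^\e)$ we bound the coefficient in $L^\infty(\band)$; combined with \cref{vandamme} and with the embedding \eqref{embed2} of \cref{lem:embedding}, the resulting term is absorbed by the first piece $5\|\chi'\eval{v^0} + \e^2\p_y w^\e\|_{L^\infty}$ of $\dot\beta$ in \eqref{lstar}. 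The linear interaction terms $\e\pn(r^\e_N\cdot\nabla)u^1$ and $\e\pn(u^1\cdot\nabla)r^\e_N$ are handled by Bony's decomposition \eqref{Bony} in the tangential variable together with the anisotropic embeddings of \cref{lem:embedding,lem:tg}, which pay for the divergence-free vertical component; after applying \cref{vandamme} to keep the exponential weight, one obtains contributions of order $\cb^2 \sqrt{\e}\,\ell_1(t)\|\D_k\rphi\|_{L^2}$, which are absorbed by the $10^9\cb^2\e\ell_1^2$ entry of \eqref{lstar} via an elementary Young inequality.

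\textbf{The main obstacle.} The singular amplification $(M[\partial_x r^\e_{N,1}])\eval{z\partial_z v^0}$ and the nonlinear term $\e\pn(r^\e_N\cdot\nabla)r^\e_N$ are the core of the argument. For the $M$ term, we use that the integral operator \eqref{eq:def-M} is bounded on $L^2_y$, that $\|\eval{z\partial_z v^0}\|_{L^\infty_y} \leq \|z\partial_z v^0(t)\|_{L^\infty(\R_+)}$, and that the Bernstein inequality \eqref{eq:bern-ring} converts $\p_x$ into the factor $2^k$; the choice $\mu = 2^k|\dot\rho|$ then makes the first factor summable in $k$ and absorbs the contribution via the $100\,\cb\|z\partial_z v^0\|_{L^\infty}$ term built into the ODE \eqref{RHO} for $\dot\rho$. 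The nonlinear term is the most delicate step: writing $r^\e_N\cdot\nabla r^\e_N = r^\e_{N,1}\p_x r^\e_N + r^\e_{N,2}\p_y r^\e_N$, applying Bony's decomposition \eqref{Bony} in the tangential direction and carrying the weight through \cref{vandamme}, the horizontal-derivative piece is swallowed by the $|\dot\rho|$ contribution thanks to the nonlinear feedback $10^7\cb^4 \e e^{\beta_\star}\|e^{\rho|\partial_x|-\beta}\nabla r^\e_N\|_{\besov{0}}$ in \eqref{RHO}, while the vertical-derivative piece is absorbed by the viscous term of \eqref{eq-grotruc} via an additional Cauchy-Schwarz redistributing a factor $\sqrt\e$ between $\|\D_k\nabla\rphi\|_{L^2}$ and $\|e^{\rho|\partial_x|-\beta}\nabla r^\e_N\|_{\besov{0}}$. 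The hard accounting lies precisely here: one must carefully track the indices $k,k'$ across the paraproduct and remainder blocks (\cref{support}) so that the outer $\ell^1(\Z)$-sum closes and the numerical constants $\sqrt{2}$, $\tfrac14\sqrt{\e}$ prescribed in \eqref{eq-grotruc} are respected.
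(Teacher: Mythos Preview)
Your high-level plan (split $f^\e_N$ via \eqref{source.ter} and absorb each piece by one of the three energy reservoirs on the right of \eqref{eq-grotruc}) is exactly the paper's strategy, and your treatment of $f^\e_{\mathrm{app}}$, of $r_2(\chi'\eval{v^0}+\e^2\p_y w^\e)$, and of the $M[\partial_x r_1]$ term matches \cref{estif1,estif2} together with the $\alpha_\e$ part of \eqref{lstar}. The allocation of the two remaining pieces, however, is reversed relative to the paper, and your version does not close.

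For the \emph{linear} interactions $\e(r\cdot\nabla)u^1+\e(u^1\cdot\nabla)r$ (\cref{lem:estim-ampli}), the paper first integrates by parts using $\diverg r=\diverg u^1=0$ to throw one derivative onto $\D_k\rphi$, and then Bony-decomposes the resulting \emph{undifferentiated} products $r_i u^1_j$. This is what produces the unavoidable $\tfrac{1}{20}\sum_k(\int\|\D_k\nabla\rphi\|^2)^{1/2}$ term that, multiplied by $2\sqrt2\sqrt\e$, sits inside the viscous budget $\tfrac14\sqrt\e\sum_k(\int\|\D_k\nabla\rphi\|^2)^{1/2}$; the other half goes to the $10^9\cb^2\e\ell_1^2$ slot of $\dot\beta$. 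Without this integration by parts you face $\p_y r_1$ directly, and that cannot be traded for a horizontal derivative, so your claim that the linear terms are absorbed \emph{solely} by the $\ell_1^2$ entry is incorrect.

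For the \emph{nonlinear} term $\e(r\cdot\nabla)r$ (\cref{lem:trilinear}), the paper does \emph{not} use the viscous reservoir at all: every one of the six Bony pieces in \eqref{sextuples}, including the vertical ones $\TT_{r_2}\p_y r$, $\TT_{\p_y r}r_2$, $\p_y\RR(r_2,r)$, is bounded by a constant times $E=\sum_{k'}2^{k'/2}(\int e^{\beta_\star}\|\nabla\rphi\|_{\besov{0}}\|\D_{k'}\rphi\|^2)^{1/2}$. The mechanism is the divergence-free estimate of \cref{lem:tg}, which converts $r_2$ in $L^2_x(L^\infty_y)$ into a tangential half-derivative of $r$; this reinstates the $2^{k'/2}$ weight and feeds the whole nonlinear contribution into the second line of the ODE \eqref{RHO} for $\dot\rho$. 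Your alternative---splitting off $\sqrt\e\|\D_k\nabla\rphi\|$ for the viscous term and leaving $\sqrt\e\|\nabla\rphi\|_{\besov{0}}\|\D_k\rphi\|$---does not land in any of the three reservoirs: after Young's inequality the leftover integrand is $\e\|\nabla\rphi\|_{\besov{0}}^2\|\D_k\rphi\|^2$, which is quadratic in $\|\nabla\rphi\|_{\besov{0}}$, whereas $|\dot\rho|$ carries only the first power and $\dot\beta$ carries none.
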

The proof of Proposition \ref{grotruc} is given in Section \ref{sec-proof-grostruc}. Let us admit Proposition \ref{grotruc} for the time being and see how to conclude the  proof of  \cref{thm:r-estimate}. Combining \eqref{avecrac}  and  \eqref{eq-grotruc} we deduce that 
\begin{equation}  \label{gronwall1}
 \begin{split}
  & \sum_{k \in\Z}\|\D_k\rphi(t)\|_{L^2}
  + \sqrt{\e} \sum_{k \in\Z}  \left(  \int_0^t \|\D_k \nabla \rphi\|_{L^2}^2 \right)^{\f12}
  \\
  & \leq
  \sqrt{2} \sum_{k \in\Z} \left(\int_0^t \frac{1}{\alpha_\e} \| \oprhozero \D_k f^\varepsilon_{\mathrm{app}}\|_{L^2}^2 \right)^\frac12.
 \end{split}
\end{equation}

\begin{proposition}[Proof in \cref{sec:cf}] \label{thm:cf}
 If $u_b$ satisfies \eqref{ub.anal} and \eqref{ub.int} for a constant $C_b > 0$ and $\rho_b > \rho_0$, there exists $C_f > 0$ such that, for  $\e,\kappa \in (0,1)$,
 \begin{equation}
  \sup_{t \in [0,T/\varepsilon^\kappa]} 
  \sum_{k \in\Z} \left(\int_0^t \frac{1}{\alpha_\e} \| \oprhozero \D_k f^\varepsilon_{\mathrm{app}}\|_{L^2}^2 \right)^\frac12
  \leq C_f (\e^{\frac{1}{4}} + \e^{1-\kappa}).
 \end{equation}
\end{proposition}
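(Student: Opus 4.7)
The plan is to estimate separately the contribution of each of the six summands in the expansion \eqref{eq:fapp} of $f^\varepsilon_{\mathrm{app}}$ to the norm
\begin{equation*}
 N(g) := \sup_{t\in[0,T/\varepsilon^\kappa]}\sum_{k\in\Z}\left(\int_0^t \frac{1}{\alpha_\e} \|\oprhozero \D_k g\|_{L^2}^2 \dt\right)^{1/2},
\end{equation*}
and to sum the resulting bounds. The four main tools are: (a) the inheritance of tangential analyticity from $u_b$ (hence from $\psi_b$) to $u^1$, which follows from \eqref{psi_b}--\eqref{eq:magic-u1} because translation and smooth multiplication in $y$ preserve the analyticity radius, so that $\sup_{t\ge 0}\|e^{\rho|\p_x|} u^1(t)\|_{H^s(\band)}$ is finite for every $s$ and every $\rho<\rho_b$; (b) the scaling Lemma \ref{lemma.scale}, which produces an $\e^{1/4}$ gain whenever a boundary-layer profile $\eval{\mathcal V}$ is measured in $L^2_y$ against a cutoff vanishing on $(-\tfrac13,\tfrac13)$; (c) the decay Lemma \ref{Lemma:V}, which for $n\ge 3$ makes $t\mapsto (1+t^2)\|V(t)\|_{H^{s,m}}^2$ integrable on $\R_+$ for the relevant $(s,m)$; and (d) the bound $\|W^\e\|_{L^\infty}+\|\p_y W^\e\|_{L^\infty}\le C_W\,\e^{-3/4}$ from Lemma \ref{lemma.W}.

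\textbf{Slow interior terms.} The four summands $-\e\Delta u^1$, $\e(u^1\cdot\nabla)u^1$, $\e^2 W^\e\p_x u^1$ and $\e^2(u^1\cdot\ey)\p_y w^\e$ involve no fast variable in $y$ and can be estimated using only analyticity of $u^1$ together with the crude bound $\int_0^{T/\e^\kappa}\alpha_\e^{-1}\,\dt\le T\e^{-2\kappa}$, valid since $\alpha_\e\ge\e^\kappa$. For the first two, a Bony paraproduct decomposition in $x$ in the spirit of Lemma \ref{support} combined with Lemma \ref{lem:embedding} yields $\sup_t \sum_k\|\oprhozero\D_k((u^1\cdot\nabla)u^1)(t)\|_{L^2}<\infty$; the linear contribution is even simpler. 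Multiplying by the $\e$ prefactor and using the time bound produces a contribution of order $\e^{1-\kappa}$. For the two $W^\e$-terms, the factor $\e^2\|W^\e\|_{L^\infty}\le C_W\e^{5/4}$ replaces $\e$, producing $\e^{5/4-\kappa}\le\e^{1-\kappa}$ since $\kappa<1$.

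\textbf{Boundary-layer terms.} For $-\chi\eval{V}\p_x u^1$, since $\chi\eval{V}$ depends only on $(t,y)$ it commutes with $\oprhozero\D_k$, so
\begin{equation*}
  \|\oprhozero \D_k (\chi\eval{V}\p_x u^1)\|_{L^2(\band)} \le \|\chi\eval{V}\|_{L^2_y}\, \|\oprhozero\D_k\p_x u^1\|_{L^2_x L^\infty_y}.
\end{equation*}
Lemma \ref{lemma.scale} bounds the first factor by $2\e^{1/4}\|V(t)\|_{L^2(\R_+)}$; using $\alpha_\e^{-1}\le 1+t^2$ together with Lemma \ref{Lemma:V} makes $\int_0^{\infty}(1+t^2)\|V(t)\|_{L^2}^2\dt$ finite for $n\ge 3$; the second factor is summable in $k$ by Lemma \ref{lem:embedding} applied to the analytic vector field $u^1$. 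This yields a contribution of order $\e^{1/4}$. The sub-term of the last summand containing $\sqrt\e\,\chi'\eval{zV}/\varphi$ is treated the same way after observing that $\chi'/\varphi$ is bounded (because $\chi'$ is supported where $\varphi\ge c>0$), producing $O(\e^{3/4})\le O(\e^{1/4})$.

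\textbf{Main obstacle.} The remaining sub-term $\chi\varphi'\eval{z\p_z V}(u^1\cdot\ey)/\varphi$ is the most delicate point: a direct application of Lemma \ref{lemma.scale} to $\chi\varphi'\eval{z\p_z V}/\varphi$ would give $\e^{-1/4}$ rather than $\e^{1/4}$, because the division by $\varphi$ is singular at $y=\pm 1$. The resolution is to absorb the singularity into the $u^1$ factor: the boundary condition $u^1\cdot\ey=0$ on $\p\band$, together with the fact that $\varphi(y)=1\mp y$ near $y=\pm 1$ (since $|\varphi'|=1$ for $|y|\ge 1/4$), implies that $(u^1\cdot\ey)/\varphi$ is bounded on the support of $\chi$, with norm controlled by $\|\p_y u^1\|_{L^\infty}$, hence analytic in $x$ with radius $\rho_b>\rho_0$. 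Rewriting the product as $[(u^1\cdot\ey)/\varphi]\cdot[\chi\varphi'\eval{z\p_z V}]$ and applying Lemma \ref{lemma.scale} to the second factor (with $\|z\p_z V\|_{L^2}\lesssim \|V\|_{H^{1,1}}$ decaying by Lemma \ref{Lemma:V}) reproduces the $\e^{1/4}$ pattern of the previous step. Summing the six contributions yields $N(f^\varepsilon_{\mathrm{app}})\le C_f(\e^{1/4}+\e^{1-\kappa})$, as claimed. A secondary technical point is the propagation of the $\oprhozero$ weight through the paraproduct in the $(u^1\cdot\nabla)u^1$ estimate; this is standard for tangential analytic classes whose radius is uniformly bounded below by $\rho_0>1$.
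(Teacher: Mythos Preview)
Your overall strategy and term-by-term decomposition match the paper's proof closely: the paper also splits the summands of $f^\e_{\mathrm{app}}$ into a group handled via $\alpha_\e\ge\e^\kappa$ (yielding the $\e^{1-\kappa}$ contribution) and a group handled via $\alpha_\e\ge (1+t^2)^{-1}$ combined with Lemma~\ref{lemma.scale} and the time-decay of $V$ from Lemma~\ref{Lemma:V} (yielding the $\e^{1/4}$ contribution). Your treatment of the potentially singular term $\chi\varphi'\eval{z\p_zV}(u^1\cdot\ey)/\varphi$, absorbing $1/\varphi$ into the $u^1$ factor via the boundary condition $u^1\cdot\ey=0$ on $\p\band$, is exactly what the paper does (implicitly, when it takes $g:=\chi\varphi'(u^1\cdot\ey)/\varphi$ as the analytic factor in~\eqref{type2}).

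There is, however, one genuine gap. In every interior and boundary-layer term you need the finiteness of a sum of the form $\sum_{k\in\Z}\|\oprhozero\D_k g\|_{L^2_x(L^p_y)}$ (or, via Lemma~\ref{lem:embedding}, of $\|\nabla u^1\|_{\besov{0}}=\sum_{k\in\Z}\|\D_k\nabla u^1\|_{L^2}$). You assert this follows from tangential analyticity of $u^1$ alone. It does not: analyticity with radius $\rho_b>\rho_0$ controls only the high-frequency tail $k\to+\infty$, while the homogeneous Besov sum over $k\to-\infty$ requires low-frequency information, i.e.\ boundedness of $\cF g(\xi,\cdot)$ near $\xi=0$. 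This is precisely why the hypothesis~\eqref{ub.int} (the $L^1_x(L^2_y)$ bound on $u_b$ and its derivatives) appears in the statement, and why the paper's proof hinges on the preliminary estimate~\eqref{plate},
\[
\sum_{k\in\Z}\|\oprhozero\D_k a\|_{L^2_x(L^p_y)}\le C_\rho\bigl(\|a\|_{L^1_x(L^p_y)}+\mathcal G^\rho_{1,p}(a)\bigr),
\]
which is derived by splitting $|\xi|\le 1$ (handled via $\|\cF a\|_{L^\infty_\xi}\le\|a\|_{L^1_x}$) from $|\xi|\ge 1$ (handled via analyticity). Your paraproduct/embedding route ultimately reduces to $\|\nabla u^1\|_{\besov{0}}<\infty$, which again cannot be obtained without invoking~\eqref{ub.int}. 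Once this low-frequency step is supplied, your argument goes through and coincides with the paper's.
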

As long as $t \leq T^\ast$, \eqref{gronwall1} holds and, thanks to \cref{thm:cf}, we obtain
\begin{equation}  \label{gronwall2}
  \sum_{k \in\Z}\|\D_k\rphi(t)\|_{L^2}
  + \sqrt{\e} \sum_{k \in\Z}  \left(  \int_0^t \|\D_k \nabla \rphi\|_{L^2}^2 \right)^{\f12}
  \leq
  \sqrt{2} C_f (\e^{\frac{1}{4}} + \e^{1-\kappa}).
\end{equation}
Moreover,
\begin{equation}
 \int_0^t |\dot{\rho}|
 \leq 10^2 \cb \int_0^{+\infty} \| z\partial_z v^0(t,z) \|_{L^\infty(\R_+)}
 + 10^7 \cb^4 \e e^{\beta_\star} \int_0^t \| \nabla \rphi \|_{\besov{0}}
\end{equation}
and, for $t \leq T/\e^\kappa$,
\begin{equation}
 \begin{split}
  \e \int_0^t \| \nabla \rphi \|_{\besov{0}}
  & = \e \sum_{k\in\Z} \int_0^t \| \D_k \nabla \rphi \|_{L^2}
  \\
  & \leq \sqrt{\e t} \cdot \sqrt{\e} \sum_{k\in\Z} \left( \int_0^t \| \D_k \nabla \rphi \|_{L^2}^2 \right)^{\f12} 
  \\
  & \leq \sqrt{2T} C_f (\e^{\frac{1}{4}} + \e^{1-\kappa}).
 \end{split}
\end{equation}
Combining these estimates yields
\begin{equation}
 \rho(T/\varepsilon^\kappa) \geq 2 - 10^7 \cb^4 e^{\beta_\star} \sqrt{2T} C_f (\e^{\frac{1}{4}} + \e^{1-\kappa}).
\end{equation}
Thus, for $\varepsilon$ small enough, $\rho(T/\e^\kappa) \geq 1$ and thus $T^\ast_N = T/\varepsilon^\kappa$ and one has
\begin{equation}
 \| r^\e_N \|_{L^\infty(L^2(\band))} 
 + \sqrt{\e} \| \nabla r^\e_N \|_{L^2(L^2(\band))}
 \leq
 \sqrt{2} e^{\beta_\star} C_f (\e^{\frac{1}{4}} + \e^{1-\kappa}).
\end{equation}
This estimate being uniform with respect to $N$, one can pass to the limit (for fixed $\e$) towards $r^\e$, and then take $\e$ small enough to conclude the proof of  \cref{thm:r-estimate}.

%%%%%%%%%%%%%%%%%%%%%%%%%%%%%%%%%%%%%

%%%%%%%%%%%%%%%%%%%%%%%%%%%%%%%

\subsection{Proof of Proposition \ref{grotruc}}
\label{sec-proof-grostruc}

To prove Proposition \ref{grotruc} we estimate separately the terms corresponding to the different terms of the decomposition of the source term $f^\e_N$  in \eqref{source.ter}. Let us start with the term corresponding to a loss of derivative. 

\begin{lemma}\label{estif1}
 For $t \in [0,T^\ast]$, there holds
\begin{gather*}
 \sum_{k\in\Z} 
 \left( \int_0^t\bigl|\langle\D_k  \opphibeta  ( M [\partial_x r_1]) \eval{z \partial_z  v^0} ,\D_k \rphi\rangle\bigr| \right)^{\f12} 
 \leq \\ \quad   %(2 \cb)^{\f{1}2}
 \sum_{k\in\Z} 2^{\f{k}2} \left(  \int_0^t 2\cb \| z \partial_z v^0 \|_{L^\infty_z}  \|\D_{k} \rphi \|^2_{L^2} \right)^{\f12} .
\end{gather*}
\end{lemma}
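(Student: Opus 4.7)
The plan is to exploit the fact that $\D_k$ and $\opphibeta$ are Fourier multipliers in $x$ only, while $\eval{z\partial_z v^0}$ depends solely on $(t,y)$ and is directed along $\ex$ (recall $v^0 = -V\ex$), and while $M$ defined in \eqref{eq:def-M} acts only in $y$. All three operations therefore commute with one another, and since $\opphibeta r^\e_N = \rphi$, one can rewrite
\begin{equation*}
 \D_k \opphibeta \bigl((M[\partial_x r_1^\e])\,\eval{z\partial_z v^0}\bigr)
 = -\eval{z\partial_z V}\; M[\partial_x \D_k \rphi_1]\; \ex,
\end{equation*}
so that only the first component of $\D_k \rphi$ effectively enters the pairing.

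Next I would apply H\"older's inequality in the form $L^\infty_y \cdot L^2_{x,y} \cdot L^2_{x,y}$, using the trivial bound $\|\eval{z\partial_z V}\|_{L^\infty_y} \leq \|z\partial_z v^0\|_{L^\infty_z}$ coming from the definition \eqref{eval}, to arrive at
\begin{equation*}
 \bigl|\langle \D_k \opphibeta ((M[\partial_x r_1^\e])\eval{z\partial_z v^0}),\,\D_k \rphi\rangle\bigr|
 \leq \|z\partial_z v^0\|_{L^\infty_z}\, \|M[\partial_x \D_k \rphi_1]\|_{L^2(\band)}\, \|\D_k\rphi\|_{L^2(\band)}.
\end{equation*}
Then I would invoke two one-dimensional ingredients applied in the $y$-variable for each fixed $x$. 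First, the $L^2$-boundedness $\|M[a]\|_{L^2(-1,1)} \leq 2\|a\|_{L^2(-1,1)}$, obtained by rewriting \eqref{eq:def-M} after the change of variable $u = \pm 1 \mp s(1\mp y)$ as $M[a](y) = -\chi(y)(1\mp y)^{-1}\int_y^{\pm 1}a(u)\,\dd u$ and applying Hardy's inequality. Second, the Bernstein estimate \eqref{eq:bern-ball} of \cref{lem:Bern}, which yields $\|\partial_x \D_k \rphi_1\|_{L^2(\band)} \leq \cb\, 2^k \|\D_k \rphi_1\|_{L^2(\band)} \leq \cb\, 2^k \|\D_k \rphi\|_{L^2(\band)}$ and thereby trades the loss of one tangential derivative for the dyadic factor $2^k$. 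Combining these bounds gives the pointwise-in-time estimate
\begin{equation*}
 \bigl|\langle\D_k \opphibeta ((M[\partial_x r_1^\e])\eval{z\partial_z v^0}), \D_k \rphi\rangle\bigr|
 \leq 2\cb \cdot 2^k\, \|z\partial_z v^0\|_{L^\infty_z}\, \|\D_k \rphi\|_{L^2}^2,
\end{equation*}
and integration in time over $[0,t]$, taking square roots, and summing over $k\in\Z$ deliver the announced inequality.

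The main obstacle is the $L^2$-boundedness of $M$: because $\chi$ does not vanish at the lateral boundary (in fact $\chi(\pm 1)=1$), the change-of-variable identity exhibits the factor $1/(1\mp y)$ which is singular precisely where $\chi$ is maximal, so one must invoke Hardy's inequality to secure a uniform constant. Conceptually, this is the whole point of the reformulation \eqref{recast1}: the $1/\sqrt{\e}$ amplification generated by the fast variable in \eqref{chaipa1} has been converted into a single lost tangential derivative on $r^\e$, which is exactly what the analyticity-decay term involving $|\dot\rho|$ on the left-hand side of the energy identity \eqref{avecrac} can afford to absorb.
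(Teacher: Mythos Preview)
Your proof is correct and follows essentially the same route as the paper's own argument: commute the $x$-Fourier multipliers $\D_k$ and $\opphibeta$ through the $y$-only factors $M$ and $\eval{z\partial_z v^0}$, bound $M$ in $L^2_y$ via Hardy's inequality with constant $2$, apply Bernstein's estimate \eqref{eq:bern-ball} to trade $\partial_x$ for $\cb\,2^k$, and then integrate, take square roots, and sum over $k$. The only cosmetic difference is that the paper states the commutation identity and the pointwise-in-time bound slightly more compactly, without isolating the $\ex$ direction or the H\"older step explicitly.
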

\begin{proof}
 Since $\eval{z \partial_z v^0}$ and the operator $M$ do not depend on the $x$ variable, 
 \begin{equation}
  \D_k \opphibeta  ( M [\partial_x r_1]) \eval{z \partial_z  v^0} =  (M [\D_k \partial_x \rphi_1 ]) \eval{z \partial_z  v^0}.
 \end{equation}
 Moreover, using the definition of $M$ in \eqref{eq:def-M}, Hardy's inequality, and the fact that $|\chi| \leq 1$, we get that, for any $a \in L^2_y(-1,1)$,
 \begin{equation}
  \| M[a] \|_{L^2_y} \leq 2 \| a \|_{L^2_y}.
 \end{equation}
 Hence, using \eqref{eq:bern-ball} from \cref{lem:Bern}, we obtain
 \begin{equation}
  \bigl|\bigl(\D_k  \opphibeta  ( M [\partial_x r_1]) \eval{z \partial_z  v^0}, \D_k \rphi\rangle\bigr|
  \leq 2^{k+1} \cb  \| z\partial_z v^0 \|_{L^\infty_z}  \| \D_k \rphi \|_{L^2}^2.
 \end{equation}
 The result follows by integration in time and summation over $k \in \Z$ of the square roots.
\end{proof}

%%%%%%%%%%%%%%%%%%%%%%%%%%%%%%%%%%%%%%

\begin{lemma}\label{estif2}
 For $t \in [0,T^\ast]$, there holds
\begin{equation} 
 \begin{split}
 \sum_{k\in\Z} &
 \left(\int_0^t \bigl| \langle\D_k  \opphibeta   r_2  (\chi' \eval{v^0} {+\e^2 \p_y w^\e}) ,\D_k \rphi\rangle \bigr|\right)^{\f12} 
 \\
 & \leq  \sum_{k\in\Z} \left( \int_0^t \|  \chi' \eval{v^0} {+\e^2 \p_y w^\e}  \|_{L^\infty (\band)}  \|\D_{k} \rphi \|^2_{L^2} \right)^{\f12}
 .
 \end{split}
\end{equation}
\end{lemma}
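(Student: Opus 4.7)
The proof will exploit the structural simplification that the multiplier
$g(t,y) := \chi'(y) \eval{v^0}(t,y) + \e^2 \partial_y w^\e(t,y)$
depends only on $t$ and $y$ and not on the tangential variable $x$. This is by construction: $v^0$ is built as $-V(t,z)\ex$ with $V$ solving the one-dimensional heat equation \eqref{eq.V}, so $\eval{v^0}(t,y)$ carries no $x$-dependence; and $w^\e(t,y) = W^\e(t,y)\ex$ with $W^\e$ solving \eqref{eq.W}, again independent of $x$.

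Because $g$ depends only on $(t,y)$, multiplication by $g$ commutes with every Fourier multiplier acting in $x$ alone. In particular it commutes with both $\D_k$ and $\opphibeta$, so
\begin{equation}
\D_k \opphibeta\bigl[ r_2 \, g \bigr] \;=\; g(t,y)\, \D_k \opphibeta r_2 \;=\; g(t,y)\,(\D_k \rphi)_2 .
\end{equation}
Taking the $L^2(\band)$ inner product with $\D_k \rphi$ and applying Cauchy--Schwarz in $(x,y)$ yields
\begin{equation}
\bigl|\langle \D_k \opphibeta[r_2 g], \D_k \rphi\rangle\bigr|
\;\leq\; \|g\|_{L^\infty(\band)}\,\|(\D_k \rphi)_2\|_{L^2}\,\|\D_k \rphi\|_{L^2}
\;\leq\; \|g\|_{L^\infty(\band)}\,\|\D_k \rphi\|_{L^2}^2,
\end{equation}
since the second component of a vector is controlled by the full vector norm.

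It then remains to integrate this pointwise-in-$t$ inequality on $[0,t]$, take the square root (no loss here since the right-hand side is already presented as $\int_0^t \|g\|_{L^\infty}\|\D_k\rphi\|^2$), and sum the resulting bounds over $k \in \Z$ to obtain exactly the estimate claimed in \cref{estif2}. There is no real obstacle: the argument is routine once one observes the $x$-independence of the multiplier, and in contrast with \cref{estif1} no loss of derivative arises because $g$ itself contains no $\partial_x$ factor. Of course, to later make use of this bound in the Grönwall step, one would absorb the resulting $\sum_k (\int_0^t \|g\|_{L^\infty}\|\D_k\rphi\|^2)^{1/2}$ term into the $\dot\beta$ dissipation via the definition \eqref{lstar} of $\beta$, but this is not part of the lemma itself.
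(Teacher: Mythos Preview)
Your proof is correct and follows essentially the same route as the paper: observe that $\chi'\eval{v^0}+\e^2\p_y w^\e$ is independent of $x$, commute it past $\D_k\opphibeta$, and conclude by Cauchy--Schwarz. The paper's version is just more terse.
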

\begin{proof}
Since $\chi' \eval{v^0} {+\e^2 \p_y w^\e}$ does not depend on $x$, 
\begin{equation}
 \D_k  \opphibeta ( r_2  (\chi' \eval{v^0} {+\e^2 \p_y w^\e})  ) =  (\chi' \eval{v^0} {+\e^2 \p_y w^\e}) \D_k  \rphi_2 ,
\end{equation} 
and therefore the result readily follows by the Cauchy-Schwarz inequality.
\end{proof}

%%%%%%%%%%%%%%%%%%%%%%%%%%%%%%%%%%%%%%

\begin{lemma}[Proof in \cref{sec:estim-ampli}]
 \label{lem:estim-ampli}
 For $t \in [0,T^\ast]$, there holds
 \begin{equation} 
  \begin{split}
  \sum_{k\in\Z} & 
  \left( \int_0^t \bigl| \langle\D_k  \opphibeta \left[ 
  \left( r \cdot \nabla \right)  u^1 
  + \left( u^1 \cdot \nabla \right) r \right],
  \D_k \rphi\rangle \bigr|\right)^{\f12} 
  \\ &
  \leq 
  10^8 \cb^2  \sum_{k\in\Z} \left( \int_0^t \ell_1^2 \|\D_{k} \rphi \|^2_{L^2} \right)^{\f12}
  + 
 \frac{1}{20} \sum_{k\in\Z} \left(  \int_0^t \|\D_k \nabla \rphi\|_{L^2}^2 \right)^{\f12}
 .
  \end{split}
 \end{equation}
\end{lemma}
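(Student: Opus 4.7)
The plan is to combine three ingredients: the product estimate of \cref{vandamme} to absorb the analytic weight $\opphibeta$, Bony's tangential paraproduct decomposition \eqref{Bony}, and the tangential embeddings of \cref{lem:embedding} and \cref{lem:tg}. The bilinear expressions split as $(r\cdot\nabla)u^1 = r_1 \partial_x u^1 + r_2 \partial_y u^1$ and $(u^1\cdot\nabla)r = u^1_1 \partial_x r + u^1_2 \partial_y r$, and I will treat each scalar product $r_i \partial_j u^1$ or $u^1_i \partial_j r$ by the same template. Since $\opphibeta = e^{-\beta}\opphi$ with $\beta\ge 0$, we can harmlessly drop the factor $e^{-\beta}$ and work with $\opphi$. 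Applying \cref{vandamme} then reduces every scalar product $\langle \D_k \opphi(f g),\D_k \rphi\rangle$ to a product form $\langle \D_k[(\opphi f^+)(\opphi g^+)], (\D_k \rphi)^+\rangle$, after which all tangential manipulations (Bony decomposition, $\D_k$, $\Low_k$, Bernstein) can be performed on nonnegative-Fourier-transform data without interfering with the analytic exponential.

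The second step is to apply the Bony decomposition in $x$ to each product $r_i\partial_j u^1$ or $u^1_i \partial_j r$ and to use the localization of \cref{support}. For the paraproducts $\T_{u^1}(\cdot)$ or $\T_{\nabla u^1}(\cdot)$, \eqref{supportT} restricts the sum to $|k'-k|\leq 4$, while for the remainder $\RR$, \eqref{supportR} yields $k'\geq k-3$. In each resulting trilinear integral I use Hölder's inequality with the distribution of norms $L^\infty_x(L^2_y)\cdot L^2_x(L^\infty_y)\cdot L^2(\band)$, placing the low-frequency factor in $L^\infty_x(L^2_y)$ (controlled by Bernstein \eqref{eq:bern-ball} applied to $\Low_{k'-1}\opphi \nabla u^1$, which brings a $2^{(k'-1)/2}$ factor and produces exactly $\ell_1$ after summation), and the dyadic block of $r$ in $L^2_x(L^\infty_y)$ (controlled by \cref{lem:embedding} for $r_1$, and by \cref{lem:tg} for $r_2$ thanks to $\diverg r=0$), while $\D_k \rphi$ goes into $L^2(\band)$. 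For the remainder $\RR$, one extracts a factor $2^{k-k'}$ to ensure $\ell^1$-summability in $k'\geq k-3$.

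The third step distinguishes the role of the derivative. When the derivative $\partial_x$ or $\partial_y$ falls on $u^1$, all factors are controlled in terms of $\ell_1$ and $\|\D_k \rphi\|_{L^2}$, and Cauchy-Schwarz in time gives a bound by $\int_0^t \ell_1^2 \|\D_k \rphi\|_{L^2}^2$. When the derivative $\partial_x$ falls on $r$, one similarly produces only a $\|\D_k \rphi\|_{L^2}$-type factor, since Bernstein turns the $2^{k'}$ from $\partial_x$ into a contribution that remains summable against $\ell_1$. The genuinely delicate case is $u^1_2 \partial_y r$: here $\|\D_{k'}\partial_y r\|_{L^2}$ must be kept as such, so Cauchy-Schwarz in time yields $(\int_0^t \ell_1^2 \|\D_{k}\rphi\|_{L^2}^2)^{1/2}\cdot(\int_0^t \|\D_k \nabla \rphi\|_{L^2}^2)^{1/2}$. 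Taking square roots and using the elementary inequality $\sqrt{ab}\leq \eta\, a+\tfrac{1}{4\eta}\,b$ with $\eta=1/20$ splits this single term into the desired absorption $\tfrac{1}{20}(\int_0^t \|\D_k \nabla \rphi\|_{L^2}^2)^{1/2}$ plus a contribution bounded by $C_B^2(\int_0^t \ell_1^2 \|\D_k \rphi\|_{L^2}^2)^{1/2}$.

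Finally, I sum over $k\in\Z$. The convolution structure from $|k'-k|\leq 4$ (paraproducts) or $k'\geq k-3$ with an extra geometric factor $2^{k-k'}$ (remainder) is handled by Young's convolution inequality for series, so that the summation over $k'$ produces precisely the norm $\ell_1 = \sum_{k'}\|\opphi\, \D_{k'}\nabla u^1\|_{L^2}$, yielding the claimed right-hand side. The main obstacle, as in all such long-time Cauchy-Kovalevskaya arguments, is a careful bookkeeping of the Hölder triples and Bernstein exponents: specifically, one must ensure that in each of the three pieces of Bony's decomposition and for each of the four building-block products $r_i\partial_j u^1$, $u^1_i\partial_j r$, the factor placed in $L^\infty_x$ is the one that may be differentiated in $x$ without loss, and that the vertical derivative never falls on $\Low_{k'-1}$ of any factor (so that one never needs an $L^\infty_y$ bound on $\partial_y u^1$ or $\partial_y r$ at low frequencies). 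Once this is arranged, summing the contributions yields the stated constants $10^8 C_B$ and $1/20$.
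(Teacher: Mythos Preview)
Your sketch misses the first move of the paper's proof and, as a result, the ``template'' you describe does not close on several of the eight pieces. The paper does \emph{not} apply Bony's decomposition to the products $r_i\,\partial_j u^1$ or $u^1_i\,\partial_j r$. It first integrates by parts, using $\diverg r=0$ and $\diverg u^1=0$, to rewrite both bilinear terms as
\[
\langle\D_k\opphibeta\bigl[(r\cdot\nabla)u^1+(u^1\cdot\nabla)r\bigr],\D_k\rphi\rangle
= -\sum_{i,j}\langle\D_k\opphibeta\,(r_i u^1_j+u^1_i r_j),\,\D_k\partial_i\rphi_j\rangle,
\]
so that the derivative sits on the \emph{test factor} $\D_k\rphi$ and Bony is applied to the \emph{undifferentiated} products $r_i u^1_j$. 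This is essential: since $u^1\in H^1_0(\band)$, the Gagliardo--Nirenberg estimate \eqref{eq:jus1} gives $\|\D_{k'}\opphi u^1_j\|_{L^2_x(L^\infty_y)}\le \cb^{1/2}2^{-k'/2}\|\D_{k'}\opphi\nabla u^1\|_{L^2}$, hence a clean $\ell_1$. Every piece then carries a factor $\|\D_k\nabla\rphi\|_{L^2}$ coming from $\D_k\partial_i\rphi_j$, and a single Peter--Paul splits it uniformly.

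Without this integration by parts, your third step breaks down. Take the paraproduct piece $\TT_{r_1}\partial_y u^1$. The high-frequency factor is $\D_{k'}\partial_y u^1$, and $\partial_y u^1\notin H^1_0(\band)$ in general, so \eqref{eq:jus1} does not yield $\|\D_{k'}\opphi\partial_y u^1\|_{L^2_x(L^\infty_y)}$ in terms of first derivatives of $u^1$; you would need second derivatives, which $\ell_1$ does not control. If instead you put $\partial_y u^1$ in $L^2(\band)$, the $L^\infty_y$ must go on $\Low_{k'-1}r_1$ or on $\D_k\rphi$; either way Gagliardo--Nirenberg brings in $\nabla\rphi$, contradicting your claim that ``when the derivative falls on $u^1$, all factors are controlled by $\ell_1$ and $\|\D_k\rphi\|_{L^2}$''. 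Moreover, your own H\"older template in step~2 already concedes this: controlling $r_1$ in $L^2_x(L^\infty_y)$ via \cref{lem:embedding} costs $\|\D_k\nabla\rphi\|$, not $\|\D_k\rphi\|$. The same objection applies to $u^1_1\partial_x r$: the assertion that ``Bernstein turns the $2^{k'}$ from $\partial_x$ into a contribution summable against $\ell_1$'' is not correct, since $\ell_1$ carries no frequency weight that could absorb $2^{k'}$; one must keep $\|\D_{k'}\partial_x r\|_{L^2}\le\|\D_{k'}\nabla\rphi\|_{L^2}$ and split via Peter--Paul, exactly as for $u^1_2\partial_y r$. In short, the missing integration by parts is not cosmetic: it is what allows the $L^\infty_y$ to be placed on $u^1$ (rather than $\nabla u^1$) and what makes all eight terms symmetric, each producing a single localized $\|\D_k\nabla\rphi\|$ factor amenable to the $1/20$ absorption.
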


%%%%%%%%%%%%%%%%%%%%%%%%%%%%%%%%%%%%%%

\begin{lemma}[Proof in \cref{trili}] \label{lem:trilinear}
 For $t \in [0,T^\ast]$, there holds
\begin{equation*} 
\sum_{k \in\Z} \left(\int_0^t\bigl|\langle\D_k  \opphibeta ( \left(r \cdot \nabla \right) r),\D_k \rphi\rangle\bigr|\right)^\frac12
\leq 600 \cb^2 \sum_{k'\in\Z} 2^{\f{k'}2}  \left(\int_0^t e^{\beta_\star} \|\na \rphi\|_{\besov{0}} \|\D_{k'} \rphi \|^2_{L^2} \right)^\frac12  .
\end{equation*}
\end{lemma}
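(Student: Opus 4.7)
The estimate is a weakly nonlinear trilinear bound in the analytic framework. My plan is to combine four ingredients: the analytic positivity reduction of Lemma \ref{vandamme}, Bony's paraproduct decomposition in the tangential variable, frequency-localised H\"older bounds with Bernstein gains, and the divergence-free structure of $r$ via Lemma \ref{lem:tg}.

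First, I would use the substitution $r=e^{-\rho|\partial_x|+\beta}\rphi$ together with Lemma \ref{vandamme} applied bilinearly to each product $r_j\partial_j r$. This reduces the estimate to a bound with an explicit factor $e^\beta\leq e^{\beta_\star}$ acting on positive versions of $\rphi$ and $\partial_j\rphi$ whose $L^2$-based norms coincide with those of $\rphi$ and $\partial_j\rphi$, so in what follows I may treat $\rphi$ as if it were positive. Next, I would apply Bony's decomposition \eqref{Bony} to write, for each $j\in\{1,2\}$,
\[
\rphi_j\partial_j\rphi=\TT_{\rphi_j}\partial_j\rphi+\TT_{\partial_j\rphi}\rphi_j+\RR(\rphi_j,\partial_j\rphi).
\]
Using Lemma \ref{support}, the dyadic blocks of the two paraproduct pieces couple only when $|k'-k|\leq 4$, while the remainder contributes only for $k'\geq k-3$. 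Each of the six resulting pieces will be estimated pointwise in time.

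For the tangential direction ($j=1$), any factor carrying $\partial_x$ gains a $2^{k'}$ via Bernstein \eqref{eq:bern-ball}, while the low-frequency factor of the paraproduct is controlled in $L^\infty(\band)$ by $\cb^2\|\na\rphi\|_{\besov{0}}$ (or by $\cb^3\,2^{k'}\|\na\rphi\|_{\besov{0}}$ after a further Bernstein step, depending on whether $\partial_x$ acts on the low- or high-frequency factor) via Lemma \ref{lem:embedding}. In either case, this produces a pointwise bound of the form $\cb^{O(1)}\,2^{k'}\|\na\rphi\|_{\besov{0}}\|\D_{k'}\rphi\|_{L^2}\|\D_k\rphi\|_{L^2}$. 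For the normal direction ($j=2$), the difficulty is that $\partial_y$ brings no spectral gain; I would therefore use the divergence-free condition and the H\"older split $\|\cdot\|_{L^\infty_x(L^2_y)}\|\cdot\|_{L^2_x(L^\infty_y)}\|\cdot\|_{L^2(\band)}$, placing $\rphi_2$ in the slot controlled by Lemma \ref{lem:tg} (which provides $\|\D_{k'}\rphi_2\|_{L^2_x(L^\infty_y)}\leq\cb\,2^{k'/2}\|\D_{k'}\rphi\|_{L^2}$) and using the tangential Bernstein estimate in the other slot to move between $L^\infty_x$ and $L^2_x$ at a further cost of $2^{k'/2}$. Assembling these factors, one again obtains a pointwise bound of the same form $\cb^{O(1)}\,2^{k'}\|\na\rphi\|_{\besov{0}}\|\D_{k'}\rphi\|_{L^2}\|\D_k\rphi\|_{L^2}$.

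Finally, taking the square root of the time integral, using the Young inequality $\|\D_{k'}\rphi\|_{L^2}\|\D_k\rphi\|_{L^2}\leq\tfrac12(\|\D_{k'}\rphi\|_{L^2}^2+\|\D_k\rphi\|_{L^2}^2)$ and splitting $2^{k'}=2^{k'/2}\cdot 2^{k'/2}$, I would interchange the $k$- and $k'$-summations. For the paraproduct pieces the constraint $|k-k'|\leq 4$ yields a bounded universal constant; for the remainder piece $\RR(\rphi_j,\partial_j\rphi)$, where $k'\geq k-3$, the high-high-to-low interaction produces an additional geometric factor in $2^{-|k-k'|/2}$ (via Bernstein in the low slot), giving a convergent $\ell^1$-type Young convolution in the dyadic indices. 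The main obstacle is the normal contribution $\rphi_2\partial_y\rphi$, where the absence of any spectral gain from $\partial_y$ is compensated only through the divergence-free condition via Lemma \ref{lem:tg}; tracking all $\cb$ factors carefully through these H\"older splits is what produces the constant $600\cb^2$.
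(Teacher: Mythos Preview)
Your approach is essentially the paper's: Bony decomposition in $x$, the analytic positivity reduction of Lemma~\ref{vandamme}, frequency-localised H\"older with Bernstein, and the divergence-free input via Lemma~\ref{lem:tg} for the vertical component. The paper organises the six pieces slightly differently: it first writes $(r\cdot\nabla)r=\partial_x(r_1 r)+\partial_y(r_2 r)$ and applies Bony to $r_j r$ (with the divergence-free cancellation $\TT_{\partial_x r_1}r+\TT_{\partial_y r_2}r=0$), so that the remainder terms are $\partial_x\RR(r_1,r)$ and $\partial_y\RR(r_2,r)$ rather than your $\RR(\rphi_j,\partial_j\rphi)$; this lets the derivative be moved onto $\D_k\rphi$ by integration by parts before estimating. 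Your variant works equally well, since in the vertical remainder the crude bound $\|\D_{k''}\partial_y\rphi\|_{L^2}\le\|\nabla\rphi\|_{\besov{0}}$ plays the same role as the paper's $\|\D_k\partial_y\rphi\|_{L^2}\le\|\nabla\rphi\|_{\besov{0}}$.

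One caveat: your claim of a \emph{uniform} pointwise bound of the exact shape $\cb^{O(1)}2^{k'}\|\nabla\rphi\|_{\besov{0}}\|\D_{k'}\rphi\|_{L^2}\|\D_k\rphi\|_{L^2}$ for all six pieces is an oversimplification. For the terms where the derivative sits on the low-frequency paraproduct factor (your $\TT_{\partial_j\rphi}\rphi_j$, the paper's \eqref{tri2} and \eqref{tri4}), the $\Low_{k'-1}$ factor does not reduce directly to this form; one needs a Peter--Paul split with dyadic weights and a subsequent Minkowski step to unfold $\Low_{k'-1}$ into $\sum_{j\le k'-2}\D_j$, exactly as the paper does. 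This is routine but should be flagged rather than absorbed into the blanket claim.
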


\cref{grotruc} follows from the definition \eqref{source.ter} of $f^\e_N$, \cref{estif1}, \cref{estif2},  \cref{lem:estim-ampli} and \cref{lem:trilinear}. Observe in particular that the sum of the right hand sides of \cref{estif1} and of \cref{lem:trilinear} can be bounded by the first term in the right hand sides of the estimate in \cref{grotruc} thanks to \eqref{RHO}. On the other hand the  sum of the right hand sides of \cref{estif2} and \cref{lem:estim-ampli} can be bounded by the other terms in the right hand sides of the estimate in \cref{grotruc} thanks to \eqref{lstar}.

% ====================================================================================================

\subsection{Estimate of \texorpdfstring{$ (r\cdot\na) u^1 + (u^1 \cdot\na) r$}{the linear terms}. Proof of Lemma \ref{lem:estim-ampli}}
\label{sec:estim-ampli}

Due to $\diverg r=0,$ we get, integrating by parts, that
\begin{equation}\label{intparts}
\langle\D_k  \opphibeta   \left( r \cdot \nabla \right)  u^1 ,\D_k \rphi\rangle 
=-\sum_{i,j=1}^2\langle\D_k  \opphibeta   \left( r_i u^1_j \right),\D_k \p_i\rphi_j\rangle.
\end{equation}
Due to $\diverg u^1=0,$ we get, integrating by parts, that
\begin{equation} \label{intparts2}
 \langle\D_k  \opphibeta  \left( u^1 \cdot \nabla \right) r,\D_k \rphi\rangle=-\sum_{i,j=1}^2\langle\D_k  \opphibeta  \left( u^1_i r_j \right),\D_k \p_i \rphi_j\rangle.
\end{equation}
This yields a total of 8 scalar terms, which we estimate separately using the same method for each set of indexes. We explain the proof only for the terms in \eqref{intparts} (the terms of \eqref{intparts2} are handled similarly).
By Bony's decomposition \eqref{Bony}, we expand the products as
\begin{equation} \label{sexpistols}
 r_i u^1_j = \TT_{r_i} u^1_j + \TT_{  u^1_j}  r_i + \RR(r_i ,  u^1_j).
\end{equation}
We explain the three estimates for each term in the right-hand side of \eqref{sexpistols}.

% ==============================================
\subsubsection{First estimate}
Using the support properties of the paraproduct decomposition as in \cref{support}, equality \eqref{supportT}, we write
\begin{equation}
 \langle \D_k \opphi (\TT_{r_i} u^1_j), \D_k \p_i\rphi_j \rangle
 =
 \sum_{|k'-k|\leq4} \langle \D_k\opphi (\Low_{k'-1} r_i) (\D_{k'}  u^1_j), \D_k \p_i\rphi_j \rangle
 .
\end{equation}
Thanks to the product estimate \eqref{eq:vandamme_fou} from \cref{vandamme}, the embedding estimate \eqref{eq:jus1} from \cref{lem:embedding}, the decomposition \eqref{SLOW} of $\Low_{k'-1}$ and estimate \eqref{eq:bern-ball} from \cref{lem:Bern}, and the definition \eqref{ell1} of $\ell_1$, we obtain
\begin{equation}
 \begin{split}
  | \langle \D_k\opphibeta (\Low_{k'-1} r_i) & (\D_{k'} u^1_j) , \D_k \p_i\rphi_j \rangle |
  \\
  & \leq \| \Low_{k'-1} \rphi_i^+ \|_{L^\infty_x(L^2_y)} \| \D_{k'} \opphi ( u^1_j)^+ \|_{L^2_x(L^\infty_y)} \| \D_k \p_i\rphi_j\|_{L^2}
  \\
  & \leq \cb^{\f32} \ell_1 2^{-\frac{k'}2} 
   \| \D_k \nabla\rphi \|_{L^2} \sum_{k''\leq k'-2}2^{\frac{k''}2}\| \D_{k''} \rphi\|_{L^2}
 .
 \end{split}
\end{equation}
Thanks to the Peter-Paul inequality, we deduce that
\begin{equation}
 \begin{split}
  | \langle & \D_k\opphibeta (\Low_{k'-1} r_i) (\D_{k'} u^1_j) , \D_k \p_i\rphi_j \rangle |
  \\ &
  \leq  
   \frac{1}{(9\cdot8\cdot60)^2} \| \D_k \nabla\rphi \|_{L^2}^2
   + \frac{1}{4} (9\cdot8\cdot60)^2 \cb^3 2^{k''-k'} \left(\sum_{k''\leq k'-2} \ell_1 \| \D_{k''} \rphi\|_{L^2}\right)^2
   .
 \end{split}
\end{equation}
Summing these estimates and applying Minkowsky's inequality leads to
\begin{equation} \label{marteau1}
 \begin{split}
  \sum_{k\in\Z} & \left(\int_0^t | \langle \D_k \opphibeta (\TT_{r_i}  u^1_j), \D_k \p_i\rphi_j \rangle | \right)^{\f12}
  \\
   & \leq
   2160 \cb^2 
  \sum_{k''\in\Z} \left( \sum_{k' \geq k''-2} \sum_{|k-k'|\leq4} 2^{\frac{k''-k'}{2}} \right) 
  \left( \int_0^t \ell_1^2 \| \D_{k''} \rphi \|_{L^2}^2 \right)^{\f12}
  \\
  & \quad + 
   \frac{1}{9\cdot8\cdot60} \sum_{k\in\Z} \sum_{|k'-k|\leq4} \left(\int_0^t \| \D_k \na \rphi \|_{L^2}^2 \right)^{\f12}
  \\
  & \leq
   2\cdot10^6 \cb^2 
  \sum_{k\in\Z}
  \left( \int_0^t \ell_1^2 \| \D_{k} \rphi \|_{L^2}^2 \right)^{\f12}
  +
   \frac{1}{8\cdot60} \sum_{k\in\Z} \left(\int_0^t \| \D_k \na \rphi \|_{L^2}^2 \right)^{\f12}
   .
 \end{split}
\end{equation}

% ==============================================
\subsubsection{Second estimate}
Using the support properties of the paraproduct decomposition as in \cref{support}, equality \eqref{supportT}, we write
\begin{equation*}
 \langle \D_k \opphi (\TT_{u^1_j} r_i), \D_k \p_i\rphi_j \rangle
 =
 \sum_{|k'-k|\leq4} \langle \D_k\opphi (\Low_{k'-1} u^1_j) (\D_{k'} r_i), \D_k \p_i\rphi_j \rangle
 .
\end{equation*}
It follows from the decomposition \eqref{SLOW} of $\Low_{k'-1}$, the estimate \eqref{embed2} from \cref{lem:embedding} and the definition \eqref{ell1} of $\ell_1$ that
\begin{equation}
 \|  \opphi \Low_{k'-1}(u^1)^+ \|_{L^\infty}\leq \cb^2 \ell_1
 .
\end{equation}
Hence, with the product estimate \eqref{eq:vandamme_fou} from \cref{vandamme}, we infer
\begin{equation}
 \begin{split}
  | \langle\D_k \opphibeta (\Low_{k'-1}  u^1_j) & (\D_{k'} r_i) , \D_k \p_i\rphi_j \rangle |
  \\
  & \leq \| \opphi \Low_{k'-1} (u^1)^+ \|_{L^\infty} \| \D_{k'} \rphi_i^+ \|_{L^2} \| \D_k \nabla\rphi\|_{L^2}
  \\
  & \leq \cb^2 \ell_1 \| \D_{k'} \rphi \|_{L^2}\| \D_{k} \nabla\rphi \|_{L^2}.
 \end{split}
\end{equation}
Summing these estimates and using the Peter-Paul inequality yields
\begin{equation} \label{marteau2}
 \begin{split}
  \sum_{k\in\Z} & \left(\int_0^t | \langle \D_k \opphibeta (\TT_{u^1_j} r_i), \D_k \p_i\rphi_j \rangle | \right)^{\f12}
  \\
  & \leq 2\cdot 10^5 \cb^2
  \sum_{k\in\Z}  \left(\int_0^t \ell_1^2 \| \D_{k} \rphi\|_{L^2}^2\right)^{\frac12}
  +\frac1{8\cdot60}\sum_{k\in\Z}\left(\int_0^t
   \| \D_k \nabla\rphi \|_{L^2}^2\right)^{\frac12}.
 \end{split}
\end{equation}
%

% ==============================================
\subsubsection{Third estimate}
By using the support properties of the paraproduct decomposition as in \cref{support}, equality \eqref{supportR}, and the definition \eqref{eq:wt-delta}, we obtain
\begin{equation*}
 \begin{split}
 \langle \D_k \opphi \RR(r_i,  u^1_j), \D_k \p_i\rphi_j \rangle
  & =
 \sum_{k'\geq k-3} \langle \D_k\opphi (\D_{k'} r_i) (\wt{\D}_{k'}  u^1_j), \D_k \p_i\rphi_j \rangle
  \\
  & = \sum_{k'\geq k-3} \sum_{|k''-k'|\leq1}
  \langle \D_k\opphi (\D_{k'} r_i) (\D_{k''}  u^1_j), \D_k \p_i\rphi_j \rangle
  .
 \end{split}
\end{equation*}
Thanks to the product estimate \eqref{eq:vandamme_fou} from \cref{vandamme}, the embedding estimate \eqref{eq:jus1} from \cref{lem:embedding}, estimate \eqref{eq:bern-ball} from \cref{lem:Bern}, and the definition \eqref{ell1} of $\ell_1$, we infer
\begin{equation}
 \begin{split}
  | \langle \D_k\opphibeta (\D_{k'} r_i) & (\D_{k''}  u^1_j) , \D_k \p_i\rphi_j \rangle |
  \\
  & \leq \| \D_{k'} \rphi_i^+ \|_{L^\infty_x(L^2_y)} \| \opphi \D_{k''} (u^1_j)^+ \|_{L^2_x(L^\infty_y)} \| \D_k \nabla\rphi \|_{L^2}
  \\
  & \leq  2^{\frac{k-k''}2} \cb^{\f32} \ell_1 \| \D_{k'} \rphi \|_{L^2} \| \D_k \nabla\rphi \|_{L^2}.
 \end{split}
\end{equation}
Using the Peter-Paul inequality and summing the resulting estimates, we find
\begin{equation} \label{marteau3}
 \begin{split}
  \sum_{k\in\Z} & \left(\int_0^t | \langle \D_k \opphibeta \RR(r_i, u^1_j), \D_k \p_i\rphi_j \rangle | \right)^{\f12}
 \\
  \leq &
  10^6 \cb^2 
  \sum_{k\in\Z}  \left(\int_0^t \ell_1^2 \| \D_{k} \rphi \|_{L^2}^2 \right)^{\f12}+
 \f1{8\cdot60}\sum_{k\in\Z} \left(\int_0^t
   \| \D_k \nabla\rphi \|_{L^2}^2\right)^{\frac12}.
 \end{split}
\end{equation}

\medskip

Eventually, summing estimates \eqref{marteau1}, \eqref{marteau2} and \eqref{marteau3} for the 8 pairs of indexes concludes the proof of \cref{lem:estim-ampli} with the claimed constants.

% ====================================================================================================

\subsection{Estimate of \texorpdfstring{$ (r\cdot\na) r $}{the nonlinearity}. Proof of Lemma \ref{lem:trilinear}}
\label{trili}

We prove \cref{lem:trilinear}, concerning the estimate of the trilinear term. First, using Bony's paraproduct decomposition \eqref{Bony} and the divergence free condition in \eqref{eq.nse.r}, we write the quadratic term as
\begin{equation} \label{sextuples}
 (r\cdot\na) r= \TT_{\jrun}\p_x r+\TT_{\p_x r}\jrun+\p_x \RR(\jrun,r)+\TT_{\jrdeux}\p_yr+\TT_{\p_yr}\jrdeux+\p_y \RR(\jrdeux,r). 
\end{equation}
As a shorthand, we set
\begin{equation}
 E := \sum_{k'\in\Z} 2^{\f{k'}2}  \left(\int_0^t e^{\beta_\star} \|\na \rphi\|_{\besov{0}} \|\D_{k'} \rphi \|^2_{L^2} \right)^\frac12,
\end{equation}    
which appears in the right hand side of the estimate given in \cref{lem:trilinear}. The proof of \cref{lem:trilinear} is obtained by summation of the six following estimates:
\begin{align}
\label{tri1}
\sum_{k\in\Z}  \left(\int_0^t\bigl|\langle \D_k  \opphibeta(\TT_{\jrun}\p_xr), \D_k\rphi\rangle\bigr|\right)^{\f12}
 &\leq 10^2 \cb^2 E  ,
  \\   \label{tri2}
 \sum_{k\in\Z}  \left(\int_0^t\bigl|\langle\D_k  \opphibeta (\TT_{\p_xr}\jrun),\D_k\rphi\rangle\bigr|\right)^{\f12}
 &\leq 10^2 \cb^2 E ,
  \\ \label{tri3}
\sum_{k\in\Z}  \left(  \int_0^t\bigl|\langle\D_k \opphibeta \p_x \RR(\jrun,r),\D_k\rphi\rangle\bigr| \right)^{\f12} 
&\leq  10^2 \cb^2 E ,
 \\  \label{tri4}
\sum_{k\in\Z}  \left(\int_0^t\bigl|\langle\D_k \opphibeta(\TT_{\jrdeux}\p_yr),\D_k\rphi\rangle\bigr| \right)^\frac12
 &\leq 10^2 \cb^2 E ,
 \\  \label{tri5} \sum_{k\in\Z} \left(\int_0^t\bigl|\langle\D_k\opphibeta(\TT_{\p_yr}\jrdeux),\D_k\rphi\rangle\bigr| \right)^\frac12 
 &\leq 10^2 \cb^2 E ,
 \\ \label{tri6}
\sum_{k\in\Z}  \left( \int_0^t\bigl|\langle\D_k \opphibeta\p_y \RR(\jrdeux,r),\D_k\rphi\rangle\bigr| \right)^{\f{1}2}
& \leq 
10^2 \cb^2 E .
 \end{align}

% ====================================================================================================

\subsubsection{Proof of \eqref{tri1}}

Using the support properties of the paraproduct decomposition as in \cref{support}, equality \eqref{supportT}, we expand
\begin{equation}
  \langle \D_k \opphi (\TT_{r_1} \p_x r), \D_k \rphi \rangle
  = 
  \sum_{|k'-k|\leq 4} \langle \opphi (\Low_{k'-1} r_1) (\D_{k'} \p_x r) , \D_k \rphi \rangle
  .
\end{equation}
Thanks to the product estimate \eqref{eq:vandamme_fou} from \cref{vandamme}, we obtain
\begin{equation}
  | \langle \opphibeta (\Low_{k'-1} r_1) (\D_{k'} \p_x r) , \D_k \rphi \rangle |
  \leq e^{\beta_\star}
  \| \Low_{k'-1} \run^+ \|_{L^\infty}
  \| \D_{k'} \p_x \rphi^+ \|_{L^2}
  \| \D_k \rphi^+ \|_{L^2}
  .
\end{equation}
Thanks to estimate \eqref{eq:bern-ball} from \cref{lem:Bern},
\begin{equation}
 \| \D_{k'} \p_x \rphi \|_{L^2}
 \leq 
 \cb 2^{k'} \| \D_{k'} \rphi \|_{L^2}
 .
\end{equation}
Using estimate \eqref{embed2} from \cref{lem:embedding}, we obtain that
\begin{equation}
 \| \Low_{k'-1} \run^+\|_{L^\infty}
 \leq 
 \cb^2 \|\na \run^+\|_{\besov{0}} 
 \leq 
 \cb^2 \|\na \rphi\|_{\besov{0}}
 .
\end{equation}
Gathering our estimates, we obtain
\begin{equation}
 \begin{split}
  & \sum_{k\in\Z} \left( \int_0^t | \langle \D_k \opphibeta (\TT_{r_1} \p_x r), \D_k \rphi \rangle | \right)^{\f12}
  \\
  & \leq \cb^{\f32} \sum_{k\in\Z} \sum_{|k'-k|\leq4}
  2^{\f{k'}{2}} \left( \int_0^t e^{\beta_\star} \| \nabla \rphi \|_{\besov{0}} \|\D_{k'} \rphi\|_{L^2} \|\D_{k} \rphi\|_{L^2} \right)^{\f12}
  \\
  & \leq 10^2 \cb^2 \sum_{k'\in\Z} 2^{\f{k'}{2}} 
  \left( \int_0^t e^{\beta_\star} \| \nabla \rphi \|_{\besov{0}} \|\D_{k'} \rphi\|_{L^2}^2 \right)^{\f12}.
 \end{split}
\end{equation}
This concludes the proof of \eqref{tri1}.

% ====================================================================================================

\subsubsection{Proof of \eqref{tri2}}
Using the support properties of the paraproduct decomposition as in \cref{support}, equality \eqref{supportT}, we expand
\begin{equation}
  \langle \D_k \opphi (\TT_{\p_x r} r_1), \D_k \rphi \rangle
  = 
  \sum_{|k'-k|\leq 4} \langle \opphi (\Low_{k'-1} \p_x r) (\D_{k'} r_1) , \D_k \rphi \rangle
  .
\end{equation}
Thanks to the product estimate \eqref{eq:vandamme_fou} from \cref{vandamme}, we obtain
\begin{equation}
 \begin{split}
  | \langle \opphibeta (\Low_{k'-1} \p_x r) & (\D_{k'} r_1) , \D_k \rphi \rangle |
  \\
  & \leq e^{\beta_\star}
  \| \Low_{k'-1} \p_x \rphi^+ \|_{L^\infty_x(L^2_y)}
  \| \D_{k'} \run^+ \|_{L^2_x(L^\infty_y)}
  \| \D_k \rphi^+ \|_{L^2}
  .
 \end{split}
\end{equation}
Thanks to estimate \eqref{embed1} from \cref{lem:embedding}, we get
\begin{equation} \label{crude1}
 \|\D_{k'} \run^+\|_{L^2_x(L^\infty_y)}
 \leq 
 \cb 2^{-\f{{k'}}2} \|\na \rphi\|_{\besov{0}}
 .
\end{equation}
Gathering our estimates and using the Peter-Paul inequality, 
\begin{equation}
 \begin{split}
&   \left( \int_0^t   \| \Low_{k'-1} \p_x \rphi^+ \|_{L^\infty_x(L^2_y)}
  \|\na \rphi\|_{\besov{0}}
  \| \D_k \rphi^+ \|_{L^2}  \right)^{\f12}
   \leq 
  2^{\f{3k}{4}} \left( \int_0^t \| \nabla \rphi \|_{\besov{0}} \|\D_{k} \rphi\|_{L^2}^2 \right)^{\f12}
\\ &\quad\quad  \quad  \quad \quad\quad\quad  \quad  \quad \quad  + 2^{-\f{3k}{4}}  \left( \int_0^t \| \nabla \rphi \|_{\besov{0}} \| \Low_{k'-1} \p_x \rphi^+ \|_{L^\infty_x(L^2_y)}^2 \right)^{\f12} ,
 \end{split}
\end{equation}
 we obtain
\begin{equation}
\label{dim}
 \begin{split}
  & \sum_{k\in\Z} \left( \int_0^t | \langle \D_k \opphibeta (\TT_{\p_x r} r_1), \D_k \rphi \rangle | \right)^{\f12}
  \\
  & \leq \cb^{\f12} \sum_{k\in\Z} 2^{\f{k}2} \sum_{|k'-k|\leq4}
  2^{\f{k-k'}{4}} \left( \int_0^t e^{\beta_\star} \| \nabla \rphi \|_{\besov{0}} \|\D_{k} \rphi\|_{L^2}^2 \right)^{\f12}
  \\
  & \quad + 
  \cb^{\f12} \sum_{k\in\Z} 2^{-k} \sum_{|k'-k|\leq4}
  2^{\f{k-k'}{4}} \left( \int_0^t e^{\beta_\star} \| \nabla \rphi \|_{\besov{0}} \| \Low_{k'-1} \p_x \rphi^+ \|_{L^\infty_x(L^2_y)}^2 \right)^{\f12}
  .
 \end{split}
\end{equation}
The first term is bounded by $10 \cb E$. Using Minkowski's inequality and \eqref{SLOW}, we estimate the second term as
\begin{equation}
 \begin{split}
  & \sum_{k\in\Z} 2^{-k} \sum_{|k'-k|\leq4}
  2^{\f{k-k'}{4}} \left( \int_0^t e^{\beta_\star} \| \nabla \rphi \|_{\besov{0}} \| \Low_{k'-1} \p_x \rphi^+ \|_{L^\infty_x(L^2_y)}^2 \right)^{\f12}
  \\
  & \leq
  \sum_{k\in\Z} 2^{-k} \sum_{|k'-k|\leq4} 2^{\f{k-k'}{4}}  \sum_{j\leq k'-2}
  \left( \int_0^t e^{\beta_\star} \| \nabla \rphi \|_{\besov{0}} \| \D_{j} \p_x \rphi^+ \|_{L^\infty_x(L^2_y)}^2 \right)^{\f12}
  .
 \end{split}
\end{equation}
Now we deduce from Bernstein's \cref{lem:Bern}, estimate \eqref{eq:bern-ball}, that
\begin{equation}
 \|\D_j \p_x \rphi^+\|_{L^\infty_x(L^2_y)}^2
 \leq 
 \cb^2 2^{3j}\|\D_j \rphi\|_{L^2}^2
 .
\end{equation}
We deduce from these estimates that the second term of \eqref{dim} is bounded by $80 \cb^2 E$. This concludes the proof of \eqref{tri2}.

% ====================================================================================================

\subsubsection{Proof of \eqref{tri3}}
First, using integration by parts in the horizontal direction, we get
\begin{equation}
 \langle \D_k \opphi \p_x \RR(\jrun,r) , \D_k \rphi \rangle
 = 
 - \langle \D_k \opphi \RR(\jrun,r) , \D_k \p_x \rphi \rangle
 .
\end{equation}
Using the support properties of the paraproduct decomposition as in \cref{support}, equality \eqref{supportR}, and the definition \eqref{eq:wt-delta} we expand the term as
\begin{equation}
 \begin{split}
  \langle \D_k \opphi \RR(\jrun,r) , \p_x \D_k \rphi \rangle
  & = \sum_{k'\geq k-3} \langle \opphi (\D_{k'} \jrun) (\wt{\D}_{k'} r) , \D_k \p_x \rphi \rangle
  \\
  & = \sum_{k'\geq k-3} \sum_{|k''-k'|\leq1} \langle \opphi (\D_{k'} \jrun) (\D_{k''} r) , \D_k \p_x \rphi \rangle
  .
 \end{split}
\end{equation}
Thanks to the product estimate \eqref{eq:vandamme_fou} from \cref{vandamme}, we obtain
\begin{equation}
 \begin{split}
  | \langle \opphibeta (\D_{k'} \jrun) & (\D_{k''} r) , \D_k \partial_x \rphi \rangle |
  \\
  & \leq e^{\beta_\star}
 \|\D_{k'} \run^+\|_{L^2_x(L^\infty_y)} 
 \|\D_{k''} \rphi^+\|_{L^2_x(L^2_y)}
 \|\D_k\p_x\rphi^+\|_{L^\infty_x(L^2_y)}
 .
 \end{split}
\end{equation}
Thanks to estimate \eqref{embed1} from \cref{lem:embedding}, we have
\begin{equation}
 \|\D_{k'} \run^+\|_{L^2_x(L^\infty_y)} 
 \leq
 \cb 2^{-\f{k'}2} \| \nabla \rphi \|_{\besov{0}}
 .
\end{equation}
Thanks to estimate \eqref{eq:bern-ball} from \cref{lem:Bern}, we have
\begin{equation}
 \|\D_k\p_x\rphi^+\|_{L^\infty_x(L^2_y)}
 \leq
 \cb 2^{\f{3k}{2}} \|\D_k \rphi\|_{L^2}
 .
\end{equation}
Gathering our estimates, we obtain
\begin{equation}
 \begin{split}
  & \sum_{k\in\Z} \left( \int_0^t | \langle \D_k \opphibeta \p_x \RR(\jrun,r) , \D_k \rphi \rangle | \right)^{\f12}
  \\
  & \leq \cb \sum_{k\in\Z} \sum_{k'\geq k-3} \sum_{|k''-k|\leq1}
  2^{\f{3k}{4}-\f{k'}{4}} \left( \int_0^t e^{\beta_\star} \| \nabla \rphi \|_{\besov{0}} \|\D_{k} \rphi\|_{L^2} \|\D_{k''} \rphi\|_{L^2} \right)^{\f12}
  \\
  & \leq \cb \sum_{k\in\Z} \sum_{k'\geq k-3} \sum_{|k''-k|\leq1}
  2^{\f{3k}{4}-\f{k'}{4}} \left( \int_0^t e^{\beta_\star} \| \nabla \rphi \|_{\besov{0}} \|\D_{k} \rphi\|_{L^2}^2 \right)^{\f12}
  \\
  & \quad + \cb \sum_{k\in\Z} \sum_{k'\geq k-3} \sum_{|k''-k|\leq1}
  2^{\f{3k}{4}-\f{k'}{4}} \left( \int_0^t e^{\beta_\star} \| \nabla \rphi \|_{\besov{0}} \|\D_{k''} \rphi\|_{L^2}^2 \right)^{\f12}
 \end{split}
\end{equation}
Up to reordering the sums, one gets that the first term is bounded by $32 \cb E$ and that the second term is bounded by $51 \cb E$. This concludes the proof of \eqref{tri3}.

% ====================================================================================================

\subsubsection{Proof of \eqref{tri4}}
Using the support properties of the paraproduct decomposition as in \cref{support}, equality \eqref{supportT}, we expand
\begin{equation}
  \langle \D_k \opphi (\TT_{\jrdeux} \partial_y r), \D_k \rphi \rangle
  = 
  \sum_{|k'-k|\leq 4} \langle \opphi (\Low_{k'-1} \jrdeux) (\D_{k'} \partial_y r) , \D_k \rphi \rangle
  .
\end{equation}
Thanks to the product estimate \eqref{eq:vandamme_fou} from \cref{vandamme}, we obtain
\begin{equation}
  | \langle \opphibeta (\Low_{k'-1} \jrdeux) (\D_{k'} \partial_y r) , \D_k \rphi \rangle |
  \leq e^{\beta_\star}
  \| \Low_{k'-1} \rdeux^+ \|_{L^\infty}
  \| \D_{k'} \partial_y \rphi^+ \|_{L^2}
  \| \D_k \rphi^+ \|_{L^2}
  .
\end{equation}
From the definition \eqref{eq:besov} of $\besov{0}$,
\begin{equation}
 \| \D_{k'} \partial_y \rphi \|_{L^2}
 \leq 
 \| \nabla \rphi \|_{\besov{0}}.
\end{equation}
Summing over $k \in \Z$ and using Young's inequality, we obtain
\begin{equation} \label{eq:tri4_1}
 \begin{split}
  & \sum_{k\in\Z} \left( \int_0^t | \langle \D_k \opphibeta (\TT_{r_2}\p_y r), \D_k \rphi \rangle | \right)^{\f12}
  \\
  & \leq 
  \sum_{k\in\Z} \sum_{|k'-k|\leq4} 2^{-\f{k}2} \left( \int_0^t e^{\beta_\star}
  \| \nabla \rphi \|_{\besov{0}}
  \| \Low_{k'-1} \rdeux^+ \|_{L^\infty}^2
  \right)^{\f12} 
  \\
  & \quad +
  \sum_{k\in\Z} \sum_{|k'-k|\leq4} 2^{\f{k}2} \left( \int_0^t e^{\beta_\star}
  \| \nabla \rphi \|_{\besov{0}}
  \| \D_k \rphi \|_{L^2}^2 \right)^{\f12}
 \end{split}
\end{equation}
Using Minkowski's inequality and \eqref{SLOW}, the first sum is bounded as
\begin{equation} \label{eq:tri4_2}
 \begin{split}
  \sum_{k\in\Z} & \sum_{|k'-k|\leq4} 2^{-\f{k}2} \left( \int_0^t e^{\beta_\star}
  \| \nabla \rphi \|_{\besov{0}}
  \| \Low_{k'-1} \rdeux^+ \|_{L^\infty}^2
  \right)^{\f12}
  \\
  & \leq 
  \sum_{k\in\Z} \sum_{|k'-k|\leq4} 2^{-\f{k}2}  \sum_{j\leq k'-2} \left( \int_0^t e^{\beta_\star}
  \| \nabla \rphi \|_{\besov{0}}
  \| \D_j \rdeux^+ \|_{L^\infty}^2
  \right)^{\f12}
 \end{split}
\end{equation}
Thanks to estimates \eqref{eq:tg} from \cref{lem:tg} and \eqref{eq:bern-ball} from \cref{lem:Bern}, we get
\begin{equation}
 \|\D_j \rdeux\|_{L^\infty} 
 \leq 
 \cb^2 2^{j}  \|\D_j \rphi\|_{L^2} 
 .
\end{equation}
Gathering these two estimates and reordering the sums we conclude that the first term in the right hand side of 
\eqref{eq:tri4_2} is bounded by $36 \cb^2 E$. 
Since the second term  is bounded by $9E$ this concludes 
the proof of \eqref{tri4}.

% ====================================================================================================

\subsubsection{Proof of \eqref{tri5}}
Using the support properties of the paraproduct decomposition as in \cref{support}, equality \eqref{supportT}, we expand
\begin{equation}
  \langle \D_k \opphi (\TT_{\p_y r}r_2), \D_k \rphi \rangle
  = 
  \sum_{|k'-k|\leq 4} \langle \opphi (\Low_{k'-1} \p_y r) (\D_{k'} \jrdeux) , \D_k \rphi \rangle
  .
\end{equation}
Thanks to the product estimate \eqref{eq:vandamme_fou} from \cref{vandamme}, we obtain
\begin{equation}
  | \langle \opphibeta (\Low_{k'-1} \p_y r) (\D_{k'} \jrdeux) , \D_k \rphi \rangle |
  \leq e^{\beta_\star}
  \| \Low_{k'-1} \partial_y \rphi^+ \|_{L^2}
  \| \D_{k'} \rdeux^+ \|_{L^\infty}
  \| \D_k \rphi^+ \|_{L^2}
  .
\end{equation}
On the one hand, thanks to estimate \eqref{eq:tg} from \cref{lem:tg} and estimate \eqref{eq:bern-ball} from \cref{lem:Bern}, we have
\begin{equation} 
 \| \D_{k'} \rdeux^+ \|_{L^\infty}
 \leq
 \cb^2 2^{k'} \| \D_{k'} \rphi \|_{L^2}
 .
\end{equation}
On the other hand, from the definition \eqref{eq:besov} of $\besov{0}$,
\begin{equation}
 \| \Low_{k'-1} \partial_y \rphi \|_{L^2}
 \leq 
 \| \nabla \rphi \|_{\besov{0}}.
\end{equation}
Gathering our estimates, we obtain
\begin{equation}
 \begin{split}
  & \sum_{k\in\Z} \left( \int_0^t | \langle \D_k \opphibeta (\TT_{\p_y r}r_2), \D_k \rphi \rangle | \right)^{\f12}
  \\
  & \leq \cb \sum_{k\in\Z} \sum_{|k'-k|\leq4}
  2^{\f{k'}{2}} \left( \int_0^t e^{\beta_\star} \| \nabla \rphi \|_{\besov{0}} \|\D_{k'} \rphi\|_{L^2} \|\D_{k} \rphi\|_{L^2} \right)^{\f12}
  \\
  & \leq 10^2 \cb \sum_{k'\in\Z} 2^{\f{k'}{2}} 
  \left( \int_0^t e^{\beta_\star} \| \nabla \rphi \|_{\besov{0}} \|\D_{k'} \rphi\|_{L^2}^2 \right)^{\f12}.
 \end{split}
\end{equation}
This concludes the proof of \eqref{tri5}.

% ====================================================================================================

\subsubsection{Proof of \eqref{tri6}}
First, using integration by parts in the vertical direction, and the null boundary condition in \eqref{eq.nse.rPhi}, we get
\begin{equation}
 \langle \D_k \opphi \p_y \RR(\jrdeux,r), \D_k\rphi \rangle 
 = 
 - \langle \D_k \opphi \RR(\jrdeux,r), \D_k\p_y \rphi \rangle
\end{equation}
Using the support properties of the paraproduct decomposition as in \cref{support}, equality \eqref{supportR}, and the definition \eqref{eq:wt-delta} we expand the term as
\begin{equation}
 \begin{split}
  \langle\D_k \opphi \RR(\jrdeux,r),\D_k\p_y \rphi\rangle
  & = \sum_{k'\geq k-3} \langle \opphi (\D_{k'} \jrdeux) (\wt{\D}_{k'} r) ,\D_k \partial_y \rphi \rangle
  \\
  & = \sum_{k'\geq k-3} \sum_{|k''-k'|\leq1} \langle \opphi (\D_{k'} \jrdeux) (\D_{k''} r) ,\D_k \partial_y \rphi \rangle
 \end{split}
\end{equation}
Thanks to the product estimate \eqref{eq:vandamme_fou} from \cref{vandamme}, we obtain
\begin{equation}
 \begin{split}
  | \langle \opphibeta (\D_{k'} \jrdeux) & (\D_{k''} r) , \D_k \partial_y \rphi \rangle |
  \\
  & \leq e^{\beta_\star}
 \|\D_{k'} \rdeux^+\|_{L^2_x(L^\infty_y)} 
 \|\D_{k''} \rphi^+\|_{L^2_x(L^2_y)}
 \|\D_k\p_y\rphi^+\|_{L^\infty_x(L^2_y)}.
 \end{split}
\end{equation}
Thanks to estimates \eqref{eq:tg} from \cref{lem:tg} and \eqref{eq:bern-ball} from \cref{lem:Bern}, we get
\begin{equation}
 | \langle \opphibeta (\D_{k'} \jrdeux) (\D_{k''} r) ,\D_k \partial_y \rphi \rangle |
 \leq e^{\beta_\star}
 \cb^2 2^{\f{k}2+\f{k'}2} 
 \|\D_{k'} \rphi\|_{L^2} \|\D_{k''} \rphi\|_{L^2}
 \|\D_k\p_y\rphi\|_{L^2}.
\end{equation}
We use the following crude estimate, which follows from definition \eqref{eq:besov}.
\begin{equation}
 \|\D_k\p_y\rphi\|_{L^2} \leq \| \nabla \rphi \|_{\besov{0}}.
\end{equation}
Gathering our estimates, we obtain
\begin{equation}
 \begin{split}
  & \sum_{k\in\Z} \left( \int_0^t | \langle \D_k \opphibeta \p_y \RR(\jrdeux,r), \D_k\rphi \rangle | \right)^{\f12}
  \\
  & \leq \cb \sum_{k\in\Z} \sum_{k'\geq k-3} \sum_{|k''-k'|\leq1}
  2^{\f{k}{4}+\f{k'}{4}} \left( \int_0^t e^{\beta_\star}\| \nabla \rphi \|_{\besov{0}} \|\D_{k'} \rphi\|_{L^2} \|\D_{k''} \rphi\|_{L^2} \right)^{\f12}
  \\
  & \leq 6 \cb \sum_{k\in\Z} \sum_{k'\geq k-3}
  2^{\f{k}{4}+\f{k'}{4}} \left( \int_0^t e^{\beta_\star}\| \nabla \rphi \|_{\besov{0}} \|\D_{k'} \rphi\|_{L^2}^2 \right)^{\f12}
  \\
  & \leq 6 \cb \sum_{k'\in\Z} 2^{\f{k'}{2}} \left(\sum_{k\leq k'+3} 2^{\f{k-k'}{4}}\right)
  \left( \int_0^t e^{\beta_\star}\| \nabla \rphi \|_{\besov{0}} \|\D_{k'} \rphi\|_{L^2}^2 \right)^{\f12}
  \\
  & \leq 10^2 \cb \sum_{k'\in\Z} 2^{\f{k'}{2}} 
  \left( \int_0^t e^{\beta_\star} \| \nabla \rphi \|_{\besov{0}} \|\D_{k'} \rphi\|_{L^2}^2 \right)^{\f12}.
 \end{split}
\end{equation}
This concludes the proof of \eqref{tri6}.

\section{Analytic estimates for the approximate trajectories}
\label{sec:sources}

\subsection{Preliminary estimates}

We introduce notations and prove preliminary estimates that will be used in the sequel. In this paragraph, $a$ denotes a function in $L^2(\band)$ for which all the norms and sums that we manipulate are finite.

\newcommand{\grhob}{\mathcal{G}^{\rho_b}}

\begin{itemize}

\item For $s \in \N$, $\rho > 0$ and $p \in \{2,+\infty\}$, we introduce the tangential analytic-type norm 
\begin{equation} \label{Gsp}
 \mathcal{G}^\rho_{s,p}(a) := \sum_{0 \leq \alpha + \beta \leq s} \enskip  \sup_{m \geq 0} \enskip
 \frac{\rho^m}{m!} \| \partial_x^{m} \p_x^{\alpha} \partial_y^{\beta} a \|_{L^2_x(L^p_y)}.
\end{equation}
In particular, thanks to condition \eqref{ub.anal}, one has $\grhob_{3,2}(u_b) \leq C_b$.

\item We introduce the notations $u^1_i = \nabla^\perp [\chi_\delta \psi_b]$ and $u^1_o := \nabla^\perp [(1-\chi_\delta) \psi_b]$. Hence, recalling the translation notation \eqref{trans}, the definition \eqref{eq:magic-u1} of $u^1$ can be recast as
\begin{equation} \label{u1io}
 u^1 = \beta \tau_h u^1_i + \tau_h u^1_o.
\end{equation}
From the definition \eqref{psi_b} of $\psi_b$, the smoothness of $\chi_\delta$ and the conditions \eqref{ub.anal} we deduce that there exists $C_1$ such that,
\begin{equation} \label{u1io.anal}
 \grhob_{3,2}(u^1_i) + 
 \grhob_{3,2}(u^1_o)
 \leq C_1.
\end{equation}
Thus, using the relation \eqref{u1io}, for any $t \geq 0$, 
\begin{equation} \label{u1.anal}
 \grhob_{3,2}(u^1(t)) \leq C_1.
\end{equation}

\item Let $p \in \{2,+\infty\}$. Thanks to the Peter-Paul inequality, then using the property \eqref{philp3} and the support property \eqref{supp.philp} for the operator~$\D_k$,
 \begin{equation}
  \begin{split} 
   \sum_{k\in\Z} \| \oprhozero \D_k a \|_{L^2_x(L^p_y)}
   & \leq \Big(\sum_{k\in\Z} 2^{-\frac{|k|}2}\Big)^{\frac12}
   \Big(\sum_{k\in\Z} 2^{\frac{|k|}2} \| \oprhozero \D_k a \|_{L^2_x(L^p_y)}^2 \Big)^{\frac12}
   \\ & \leq
   \frac{3}{2\pi^2} \Big( \int (|\xi|^{\frac12}+|\xi|^{-\frac12}) e^{2\rho_0|\xi|} \| \cF a (\xi) \|_{L^p_y}^2 \dd \xi  \Big)^{\frac12}
  \end{split}
 \end{equation}
 For low frequencies, using a uniform bound for the Fourier transform yields
 \begin{equation}
  \int_{|\xi|\leq1} (|\xi|^{\frac12}+|\xi|^{-\frac12}) e^{\rho_0|\xi|} \| \cF a (\xi) \|_{L^p_y}^2 \dd \xi
  \leq \frac{16}{3} e^{\rho_0} \| a \|_{L^1_x(L^p_y)}^2.
 \end{equation}
 For high frequencies, using the elementary inequality $e^x \leq 2 \cosh x$,
 \begin{equation}
  \int_{|\xi|\geq1} (|\xi|^{\frac12}+|\xi|^{-\frac12}) e^{\rho_0|\xi|} \| \cF a (\xi) \|_{L^p_y}^2 \dd \xi
  \leq 16 \pi^2 \sum_{m\geq0} \frac{(2\rho_0)^{2m}}{(2m)!} \|\partial_x^m \partial_x a \|_{L^2_x(L^p_y)}^2.
 \end{equation}
 Hence, for any $\rho > \rho_0$, there exists ${C}_\rho >0$ such that,
 \begin{equation} \label{plate}
  \sum_{k\in\Z} \| \oprhozero \D_k a \|_{L^2_x(L^p_y)}
  \leq
  {C}_\rho \left(\|a\|_{L^1_x(L^p_y)} + \mathcal{G}^\rho_{1,p}(a) \right).
 \end{equation}

\end{itemize}

\subsection{Estimates for the amplification terms}
\label{sec:beta}

We prove \cref{thm:beta}. Recalling the definition \eqref{lstar} of $\beta$, we proceed term by term.
\begin{itemize}

\item Recalling the definition \eqref{def.Hsm} of the space $H^{1,1}(\R_+)$ and using the decay estimate~\eqref{ineq.V.decay} for the boundary layer from \cref{Lemma:V} with $n = 1$, we get 
\begin{equation}
 \begin{split}
  \int_0^{+\infty} \| \chi' \eval{v^0(s)} \|_{L^\infty(\band)} \dd s
  & \leq \|\chi'\|_{L^\infty(-1,1)} \int_0^{+\infty} \| V(s) \|_{L^\infty(\R_+)} \dd s
  \\
  & \leq 2 \|\chi'\|_{L^\infty(-1,1)} \int_0^{+\infty} \| V(s) \|_{H^{1,1}(\R_+)} \dd s
  \\
  & \leq 2 C \|\chi'\|_{L^\infty(-1,1)} \int_0^{+\infty} \left| \frac{\ln(2+s)}{2+s} \right|^{\frac{5}{4}} \dd s
  \\
  & \leq 52 C \|\chi'\|_{L^\infty(-1,1)}.
 \end{split}
\end{equation}

\item Recalling the definition \eqref{eq.wW} of $w^\e$ and using estimate \eqref{estimate.W} from \cref{lemma.W},
\begin{equation}
 \e^2 \int_0^{T/\e^\kappa} \| \partial_y w^\e(t) \|_{L^\infty(\band)} \dd t
 \leq T \e^{2-\kappa} \| \p_y W^\e \|_{L^\infty(\R_+;L^\infty(\band))}
 \leq C_W T \e^{1-\kappa+\frac14}.
\end{equation}

\item Recalling the definition \eqref{ell1} of $\ell_1$ and applying estimate \eqref{plate} to $a=\nabla u^1(t)$ and \eqref{u1.anal}, there holds for any $t\geq 0$,
\begin{equation}
 \begin{split}
 \ell_1(t) 
 & \leq {C}_{\rho_b} \left( \| \nabla u^1(t) \|_{L^1_x(L^2_y)} + \grhob_{1,2}(\nabla u^1(t)) \right) 
  \\ & \leq {C}_{\rho_b} \left( \| \nabla u^1_i \|_{L^1_x(L^2_y)} + \| \nabla u^1_o \|_{L^1_x(L^2_y)} + C_1 \right)
  \\ & \leq {C}_{\rho_b} \left( 2 C_b + C_1 \right),
 \end{split}
\end{equation}
thanks to estimate \eqref{ub.int}.

\item Recalling the definition \eqref{defalpha} of $\alpha_\e$, 
\begin{equation}
 \int_0^{T/\e^\kappa} \alpha_\e(t) \dd t 
 \leq T + \frac{\pi}{2}.
\end{equation}

\end{itemize}
Gathering these four estimates concludes the proof of \cref{thm:beta}.

\subsection{Estimates for the source terms}
\label{sec:cf}

We prove \cref{thm:cf}. We will use the two following inequalities. First, for $g \in L^2((0,T/\e^\kappa)\times\band)$, thanks to the first part of the definition \eqref{defalpha} of $\alpha_\e$
\begin{equation} \label{type1}
  \sum_{k\in\Z} \left(\int_0^{T/\e^\kappa} \frac{1}{\alpha_\e} \| \oprhozero \D_k (\e g) \|_{L^2(\band)}^2 \right)^{\frac12}
  \leq \e^{1-\kappa} \sum_{k\in\Z} \sup_{t\geq0} \| \oprhozero \D_k g(t) \|_{L^2(\band)}.
\end{equation}
For $g \in L^2((0,T/\e^\kappa)\times\band)$ and $\mathcal{V}\in L^2(\R_+\times\R_+)$, thanks to the second part of the definition \eqref{defalpha} of $\alpha$ and estimate \eqref{eq.lemma.scale} from \cref{lemma.scale},
\begin{equation} \label{type2}
 \begin{split}
  \sum_{k\in\Z} & \left(\int_0^{T/\e^\kappa} \frac{1}{\alpha_\e} \| \oprhozero \D_k (\eval{\mathcal{V}} g) \|_{L^2(\band)}^2 \right)^{\frac12}
  \\
  & \leq 2 \e^{\frac14} \left(\int_{\R_+} (1+t^2) \| V(t) \|_{L^2(\R_+)}^2 \dd t\right)^{\frac12} \sum_{k\in\Z} \sup_{t\geq0} \| \oprhozero \D_k g(t) \|_{L^2_x(L^\infty_y)}.
 \end{split}
\end{equation}
Recalling the definition \eqref{eq:fapp} of $f^\e_{\mathrm{app}}$, we proceed slightly differently for the first four terms (using \eqref{type1}), and for the last three terms (using \eqref{type2}). 

\begin{equation} 
 \begin{split}
  f^\varepsilon_\mathrm{app}
 := &
 - \varepsilon \Delta u^1 + \varepsilon (u^1 \cdot \nabla) u^1
 {+\e^2 W^\e \partial_x u^1 + \e^2 (u^1 \cdot \ey) \partial_y w^\e}
 \\ & - \chi \eval{V} \partial_x u^1
 - \frac{u^1 \cdot \ey}{\varphi} \left(\sqrt{\varepsilon} \chi' \eval{z V} + \chi \varphi' \eval{z \partial_z V}\right) \ex.
 \end{split}
\end{equation}

\begin{itemize}

 \item \emph{First term}. We apply \eqref{type1} to $g := -\Delta u^1$. For $t \geq 0$, thanks to \eqref{u1io},
 \begin{equation}
  \begin{split}
   \| \oprhozero \D_k \Delta u^1(t) \|_{L^2(\band)}
   & = 
   \| \oprhozero \D_k \Delta (\beta(t) (\tau_h u^1_i)(t) + (\tau_h u^1_o)(t)) \|_{L^2(\band)} 
   \\ & =
   \| \oprhozero \D_k \Delta (\beta(t) u^1_i + u^1_o) \|_{L^2(\band)} 
   \\ & \leq 
   \| \oprhozero \D_k \Delta u^1_i \|_{L^2(\band)} +
   \| \oprhozero \D_k \Delta u^1_o \|_{L^2(\band)} 
  \end{split}
 \end{equation}
 Thanks to \eqref{plate}, \eqref{u1io.anal} and \eqref{ub.int}, we deduce that
 \begin{equation}
  \sum_{k\in\N} \| \oprhozero \D_k \Delta u^1_i \|_{L^2(\band)} +
   \| \oprhozero \D_k \Delta u^1_o \|_{L^2(\band)} 
   < +\infty.
 \end{equation}
 Hence,
 \begin{equation}
  \sum_{k\in\Z} \sup_{t\geq0} \| \oprhozero \D_k \Delta_u^1(t) \|_{L^2(\band)} < +\infty.
 \end{equation}

 \item \emph{Second term}. We apply \eqref{type1} to $g := (u^1\cdot\nabla) u^1$. Similarly, we decompose $u^1$ thanks to \eqref{u1io} to get an estimate which is uniform with respect to time. We get four terms. As an example, let us bound using \eqref{plate}, for some $\rho \in (\rho_0,\rho_b)$,
 \begin{equation}
  \sum_{k\in\Z} \| \oprhozero \D_k (u^1_i\cdot\nabla) u^1_i \|_{L^2(\band)} 
  \leq 
  \tilde{C}_\rho \left( \| (u^1_i\cdot\nabla) u^1_i \|_{L^1_x(L^2_y)} + \mathcal{G}^\rho_{1,2}((u^1_i\cdot\nabla) u^1_i) \right).
 \end{equation}
 The first term is finite thanks to \eqref{ub.int}. The second term requires an analytic estimate for this quadratic term. For $m \geq 0$, and $a,b \in L^2(\band)$, thanks the Leibniz differentiation rule,
 \begin{equation}
  \begin{split}
   \frac{\rho^m}{m!} \| \partial_x^m [(a\cdot\nabla)b]\|_{H^1(\band)}
   & \leq \sum_{j=0}^m \binom{m}{j} \frac{\rho^m}{m!} \| ((\partial_x^j a) \cdot \nabla) (\partial_x^{m-j} b) \|_{H^1(\band)}
   \\
   & \leq \sum_{j=0}^m \binom{m}{j} \frac{\rho^m}{m!}
   \frac{j!}{\rho_b^j} \grhob_{1,2}(a) \frac{(m-j)!}{\rho_b^{m-j}} \grhob_{2,2}(b)
   \\
   & \leq (m+1) \left(\frac{\rho}{\rho_b}\right)^m \grhob_{1,2}(a) \grhob_{2,2}(b)
   \\
   & \leq C\grhob_{1,2}(a) \grhob_{2,2}(b),
  \end{split}
 \end{equation}
 for some $C > 0$ independent of $m$ because $\rho < \rho_b$.  Hence,
 \begin{equation}
  \sum_{k\in\Z} \sup_{t\geq0} \| \oprhozero \D_k (u^1(t)\cdot\nabla)u^1(t) \|_{L^2(\band)} < +\infty.
 \end{equation}

 \item \emph{Third and fourth terms}. We apply \eqref{type1} to $g := \e W^\e \partial_x u^1 + \e (u^1\cdot\ey)\p_y w^\e$. Since $W^\e$ does not depend on $x$, we use the estimate
 \begin{equation}
  \begin{split}
  \| \oprhozero \D_k g(t) \|_{L^2(\band)}
  & \leq \e (\|  W^\e \|_\infty + \| \partial_y w^\e \|_\infty) \times
  \\
  & (\| \oprhozero \D_k \partial_x u^1(t) \|_{L^2(\band)} + 
  \| \oprhozero \D_k u^1(t) \|_{L^2(\band)})
  \\
  & \leq \e^{\frac14} C_W (\| \oprhozero \D_k \partial_x u^1(t) \|_{L^2(\band)} + 
  \| \oprhozero \D_k u^1(t) \|_{L^2(\band)})
  \end{split}
 \end{equation}
 thanks to estimate \eqref{estimate.W} from \cref{lemma.W}. We proceed as above using \eqref{u1io} to get uniform bounds on the sums. 
 
 \item \emph{Fifth term}. We apply \eqref{type2} to $g := \chi \p_x u^1$ and $\mathcal{V} := V$. The integral in time of the boundary layer is finite thanks to \eqref{ineq.V.decay} from \cref{Lemma:V} because $n\geq3$. The sum in $k$ is also finite using the same techniques as above.
 
 \item \emph{Sixth term}. We apply \eqref{type2} to $g := \chi' (u^1 \cdot \ey) / \varphi$ and $\mathcal{V} := z V$. The integral in time of the boundary layer is finite thanks to \eqref{ineq.V.decay} from \cref{Lemma:V} because $n\geq3$. The sum in $k$ is also finite using the same techniques as above.

 \item \emph{Seventh term}.  We apply \eqref{type2} to $g := \chi \varphi' (u^1 \cdot \ey) / \varphi$ and $\mathcal{V} := z \p_z V$. The integral in time of the boundary layer is finite thanks to \eqref{ineq.V.decay} from \cref{Lemma:V} because $n\geq3$. The sum in $k$ is also finite using the same techniques as above.

\end{itemize}

\section*{Acknowledgements}

The authors warmly thank Jean-Pierre Puel for his advice concerning the local controllability of the Navier-Stokes equations, and Marius Paicu for his insight on analytic regularization properties for Navier-Stokes systems. The authors also thank an anonymous referee for suggesting \cref{rk:no_carleman}.

\bigskip

\mbox{J.-M.\,Coron} and F.\,Marbach were supported by ERC Advanced Grant 266907 (CPDENL) of the 7th Research Framework Programme (FP7). 
\mbox{J.-M.\,Coron} also benefits from the supports of ANR project Finite4SoS (ANR 15 CE23 0007) and  Laboratoire sino-français en mathématiques appliquées (LIASFMA). 
F.\,Sueur was supported by the Agence Nationale de la Recherche, Project DYFICOLTI, grant ANR-13-BS01-0003-01, Project IFSMACS, grant ANR-15-CE40-0010 and Project  BORDS, grant ANR-16-CE40-0027-01, the Conseil Régional d’Aquitaine, grant 2015.1047.CP, and by the H2020-MSCA-ITN-2017 program, Project ConFlex, Grant ETN-765579. 
P.\,Zhang is partially supported by NSF of China under Grant 11371347.
F.\,Sueur warmly thanks Beijing's Morningside center for Mathematics for its kind hospitality a stay in September 2017.
F.\,Marbach and F.\,Sueur are grateful to ETH-ITS and ETH Zürich for their kind hospitality during two stays respectively in October and November 2017.

\bibliographystyle{plain}
\bibliography{bibliography}

\end{document}